\newcommand\cyr{%
\renewcommand\rmdefault{wncyr}%
\renewcommand\sfdefault{wncyss}%
\renewcommand\encodingdefault{OT2}%
\normalfont
\selectfont}
\DeclareTextFontCommand{\textcyr}{\cyr}
\DeclareFontFamily{OT1}{rsfs}{}
\DeclareFontShape{OT1}{rsfs}{n}{it}{<-> rsfs10}{}
\DeclareMathAlphabet{\mathscr}{OT1}{rsfs}{n}{it}
\numberwithin{equation}{section}
\newtheorem{theorem}{Theorem}[section]
\newtheorem{lemma}[theorem]{Lemma}
\newtheorem{proposition}[theorem]{Proposition}
\newtheorem{corollary}[theorem]{Corollary}
\newtheorem{question}{Question}
\newtheorem{definition and recall}[theorem]{Definition and recall}
\theoremstyle{definition}
\newtheorem{definition}[theorem]{Definition}
\newtheorem{remark}[theorem]{Remark}
\theoremstyle{remark}
\newtheorem{example}[theorem]{Example}
\newtheorem{notation}[theorem]{Notation}
\newtheorem{acknowledgement}{Acknowledgement}
\newcommand{\fp}{\frak{p}}
\begin{document}
\title[The structure of finitely generated modules]{On the structure of finitely generated modules and the unmixed degrees}

\author[N.T. Cuong]{Nguyen Tu Cuong}
\address{Institute of Mathematics, Vietnam Academy of Science and Technology, 18 Hoang Quoc Viet Road, 10307
Hanoi, Vietnam}
\email{ntcuong@math.ac.vn}

\author[P.H. Quy]{Pham Hung Quy${}^*$}
\address{Department of Mathematics FPT University, Hanoi, Vietnam}
\email{quyph@fe.edu.vn}
\thanks{$^*$Corresponding author: Pham Hung Quy
	\\
	2020 {\em Mathematics Subject Classification\/}: 13H10, 13D45, 13H15.\\
This work is partially supported by funds of Vietnam National Foundation for Science
and Technology Development (NAFOSTED) under grant number 101.04-2023.08}

\keywords{Cohen-Macaulay module; local cohomology; system of parameters; unmixed component; Cohen-Macaulay deviated sequence; extended degree; unmixed degree.}

\begin{abstract}
Let $(R, \frak m)$ be the homomorphic image of a Cohen-Macaulay local ring and $M$ a finitely generated $R$-module. We use the splitting of local cohomology to shed a new light on the structure of non-Cohen-Macaulay modules. Namely, we show that every finitely generated $R$-module $M$ is associated to a sequence of invariant modules. This module sequence expresses the deviation of $M$ with the Cohen-Macaulay property. Our result generalizes the unmixed theorem of Cohen-Macaulayness for any finitely generated $R$-module. As an application we construct a new extended degree in the sense of Vasconcelos.
\end{abstract}
\maketitle

%\tableofcontents
\section{Introduction}
Throughout this paper, let $(R, \frak m)$ be a Noetherian local ring that is the homomorphic image of a Cohen-Macaulay local ring, and $M$ a finitely generated $R$-module of dimension $d$. Let $x_1, \ldots, x_d$ be a system of parameters of $M$.\\

The Cohen-Macaulay rings and modules are central objects of commutative algebra. The unmixed theorem says that $M$ is Cohen-Macaulay if and only if for every system of parameters $x_1, \ldots, x_d$ of $M$ for every $i < d$, we have that all associated prime ideals of $M/(x_1, \ldots, x_i)M$ have the same  dimension $d-i$, i.e. $\dim R/\frak p = d-i$ for all $\frak p \in \mathrm{Ass}\,M/(x_1, \ldots, x_i)M$. So $M/(x_1, \ldots, x_i)M$ is an unmixed module for all $i < d$. Note that if $\cap_{\frak p \in \mathrm{Ass}M}N(\frak p) = 0$ is a reduced primary decomposition of the zero submodule of $M$, then the {\it unmixed component} of $M$ is defined by
$$U_M(0) = \bigcap_{\frak p \in \mathrm{Ass}M, \dim R/\frak p = d}N(\frak p).$$
Then $U_M(0)$ is just the largest submodule of $M$ of dimension strictly less than $d$. The following is the unmixed component version of unmixed theorem.\\

\noindent {\bf The unmixed theorem.} {\it A finitely generated $R$-module $M$ is Cohen-Macaulay if and only if for some (and hence for all) system of parameters $x_1, \ldots, x_d$ of $M$ all unmixed components
$$U_M(0), U_{M/x_1M}(0), \ldots, U_{M/(x_1, \ldots, x_{d-1})M}(0)$$
are zero modules.}\\

A finitely generated $R$-module $M$ is Cohen-Macaulay if and only if every system of parameters $x_1, \ldots, x_d$ of $M$ is an $M$-regular sequence. Recall that $x_1, \ldots, x_d$ is an $M$-regular sequence if for all $i \le d$ all relations
$$x_1 a_1 + \cdots + x_i a_i = 0$$
 are trivial, i.e. $a_i \in (x_1, \ldots, x_{i-1})M$ for all $i \le d$. In general we have $a_i \in (x_1, \ldots, x_{i-1})M:x_i$, so $x_1, \ldots, x_d$ is an $M$-regular sequence if the sub-quotient  module
$$\frac{(x_1, \ldots, x_{i-1})M:x_i}{(x_1, \ldots, x_{i-1})M} = 0,$$
for all $i = 1, \ldots, d$. Since 
$$((x_1, \ldots, x_{i-1})M:x_i)/(x_1, \ldots, x_{i-1})M = 0:_{M/(x_1,\ldots ,x_{i-1})M}x_i$$ 
is a submodule of $M/(x_1, \ldots, x_{i-1})M$ of dimension less than or equal to $d-i = \dim M/(x_1, \ldots, x_{i-1})M -1$, we have
$$((x_1, \ldots, x_{i-1})M:x_i)/(x_1, \ldots, x_{i-1})M \subseteq U_{M/(x_1, \ldots, x_{i-1})M}(0)$$
for all $i \le d$. Set
$$\frak b(M) = \bigcap_{\underline{x}, i \le d} \mathrm{Ann} \frac{(x_1, \ldots, x_{i-1})M:x_i}{(x_1, \ldots, x_{i-1})M},$$
where $\underline{x} = x_1, \ldots, x_d$ runs over all systems of parameters of $M$.  It is clear that the ideal $\frak b(M)$ kills all non-trivial relations of systems of parameters of $M$.\\
 
The Cohen-Macaulayness of $M$ can be characterized by local cohomology: $M$ is Cohen-Macaulay if and only if the local cohomology $H^i_{\frak m}(M) = 0$ for all $i<d = \dim M$. Thus if $M$ is not Cohen-Macaulay, then $H^i_{\frak m}(M) \neq 0$ for some $i < d$. Notice that $H^i_{\frak m}(M)$ is always Artinian but it is rarely Noetherian. So $H^i_{\frak m}(M)$ may not be annihilated by $\frak m$-primary ideals. The ideals $\frak a_i(M) = \mathrm{Ann}\,H^i_{\frak m}(M)$, $i = 0 , \ldots, d$, play an important role in many areas in commutative algebra such as the homological conjectures, the tight closure theory, etc. Set $\frak a(M) = \frak
a_0(M) \ldots \frak a_{d-1}(M)$. Schenzel proved the following inclusions \cite[Satz 2.4.5]{Sch82}
$$\mathfrak{a}(M) \subseteq \mathfrak{b}(M) \subseteq \mathfrak{a}_0(M) \cap \cdots \cap \mathfrak{a}_{d-1}(M).$$
Notice that our ring is always the homomorphic image of a Cohen-Macaulay local ring. This condition gives us a critical fact that $\dim M/ \frak a(M)< \dim M$ for all finitely generated $R$-modules. Therefore, we can choose a parameter element $x$ contained in $\frak a(M)$ (and hence in $\frak b(M)$). Furthermore, we have a special system of parameters satisfying that
$$x_d \in \frak a(M), x_{d-1} \in \frak a(M/x_dM),  \ldots, x_1 \in \frak a(M/(x_2, \ldots, x_d)M).$$
Such a system of parameters is called a {\it $p$-standard system of parameters} \cite{C95}. The $p$-standard systems of parameters play a key ingredient in Kawasaki's proof for the Macaulayfication problem \cite{K00}. By \cite[Theorem 1.3 (a) and (b)]{CC17} $R$ is the homomorphic image of a Cohen-Macaulay local ring if and only if every finitely generated $R$-module admits a $p$-standard system of parameters.\\

 In this paper, we will use a similar type of $p$-standard system of parameters to study the splitting of local cohomology modules. As mentioned above we know that $0:x \subseteq U_M(0)$ for every parameter element $x$ of $M$. Moreover, if $x \in \frak b(M)$ then we have $0:x = U_M(0)$. Hence we get the following short exact sequence
$$0 \to M/U_M(0) \overset{x}{\to} M \to M/xM \to 0.$$
Furthermore if $x \in \frak b(M)^2$ then the above short exact sequence deduces the short exact sequence of local cohomology for any ideal $I$ (see Lemma \ref{B3.2.3})
$$0 \rightarrow H^i_I(M) \rightarrow H^i_I(M/xM) \rightarrow
H^{i+1}_I(M/U_M(0)) \rightarrow 0$$
for all $i < d - \dim R/I - 1$. Using the method of \cite{CQ11} we can study the splitting of these exact sequences of local cohomology modules. We obtain the first main result of this paper as follows.

\begin{theorem}\label{T1.1} Let $I$ be an ideal of $R$ and $x$ a parameter element of $M$ contained in $\frak b(M)^3$. Then for all $i < d - \dim R/I - 1$ we have
  $$H^i_I(M/xM) \cong H^i_I(M) \oplus H^{i+1}_I(M/U_M(0)).$$
\end{theorem}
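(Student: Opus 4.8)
The plan is to reduce the statement to a splitting assertion and then to produce the splitting by the method of \cite{CQ11}. Since $\mathfrak{b}(M)^{3}\subseteq\mathfrak{b}(M)^{2}$, Lemma~\ref{B3.2.3} already supplies, for every $i<d-\dim R/I-1$, the short exact sequence
\begin{equation}\label{eq:planSES}
0\longrightarrow H^{i}_{I}(M)\overset{p^{i}}{\longrightarrow}H^{i}_{I}(M/xM)\overset{\partial^{i}}{\longrightarrow}H^{i+1}_{I}(M/U_{M}(0))\longrightarrow 0,
\end{equation}
in which $p^{i}$ is the map induced by the natural surjection $M\twoheadrightarrow M/xM$. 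Thus the theorem is equivalent to the assertion that \eqref{eq:planSES} splits, and the idea is to spend the factor of $\mathfrak{b}(M)$ not already consumed in producing \eqref{eq:planSES} on the construction of a splitting homomorphism.

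The mechanism I would use is a ladder argument. First one records that $0:_{M}x^{n}=U_{M}(0)$ for every $n\geq 1$: the submodule $0:_{M}x^{n}$ is killed by a power of the parameter element $x$, hence has dimension $<d$ and so lies in $U_{M}(0)$, while $0:_{M}x=U_{M}(0)$ because $x\in\mathfrak{b}(M)$. This yields the commutative family of short exact sequences of $R$-modules
\[
0\longrightarrow M/U_{M}(0)\overset{x^{n}}{\longrightarrow}M\longrightarrow M/x^{n}M\longrightarrow 0\qquad(n\geq 1),
\]
linked by the natural surjections $M/x^{n+1}M\twoheadrightarrow M/x^{n}M$ on one side and by the multiplications $M/x^{n}M\overset{x}{\longrightarrow}M/x^{n+1}M$ on the other (the remaining vertical arrows being identities). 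Applying $H^{\bullet}_{I}(-)$ and invoking Lemma~\ref{B3.2.3} for each parameter element $x^{n}\in\mathfrak{b}(M)^{2}$ turns this into a ladder of short exact sequences of the form \eqref{eq:planSES}. Since $x\in\mathfrak{b}(M)$ and both $i$ and $i+1$ lie below $d-\dim R/I$, multiplication by $x$ annihilates the two outer terms $H^{i}_{I}(M)$ and $H^{i+1}_{I}(M/U_{M}(0))$ of \eqref{eq:planSES} — the same annihilation that underlies Lemma~\ref{B3.2.3}, coming from $\mathfrak{b}(M)\subseteq\bigcap_{j<d}\mathfrak{a}_{j}(M)$ together with the comparison between $H^{\bullet}_{I}$ and $H^{\bullet}_{\mathfrak{m}}$ available over a homomorphic image of a Cohen--Macaulay ring. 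A diagram chase on the ladder then transports a retraction of $H^{i}_{I}(M)\hookrightarrow H^{i}_{I}(M/x^{n}M)$ down to a retraction of $p^{i}$; at this point one invokes the device of \cite{CQ11}, which (for the maximal ideal) realizes such a splitting via Matlis duality and the structure of the deficiency modules $K^{j}(M)=\Ext^{\dim S-j}_{S}(M,S)$, where $R=S/\mathfrak{a}$ with $S$ Cohen--Macaulay, and which adapts to $H^{\bullet}_{I}$.

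The step I expect to be the main obstacle is this transport of the splitting. A naive chase only splits the sequences attached to $x^{n}$ with $n\geq 2$: the composite $H^{i}_{I}(M)\hookrightarrow H^{i}_{I}(M/xM)\to H^{i}_{I}(M/x^{2}M)$, whose second arrow is induced by $M/xM\overset{x}{\to}M/x^{2}M$, agrees with multiplication by $x$ on $H^{i}_{I}(M)$ followed by the inclusion into $H^{i}_{I}(M/x^{2}M)$, hence is zero, so a retraction of $p^{i}$ cannot simply be pulled back along the multiplication maps. To obtain the splitting for $x$ itself one must play the $x$- and $x^{2}$-sequences against each other through the \cite{CQ11} comparison while tracking the $\mathfrak{b}(M)$-annihilators at each stage, and this is precisely where the third power of $\mathfrak{b}(M)$ is needed: one factor makes $0:_{M}x=U_{M}(0)$, a second produces \eqref{eq:planSES} via Lemma~\ref{B3.2.3}, and the third forces the chase to close up, so that \eqref{eq:planSES} itself — and not merely its higher-power companions — splits.
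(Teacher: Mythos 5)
Your reduction of the theorem to the splitting of the short exact sequence supplied by Lemma~\ref{B3.2.3} is correct, and you have put your finger on a genuine obstruction, but you do not overcome it, and the mechanism you propose is not the one that actually closes the argument. The paper's proof proceeds quite differently. Using the prime avoidance lemma for products (Lemma~\ref{L2.15}, in the form of Corollary~\ref{C2.16}), one writes $x = a_1b_1 + \cdots + a_rb_r$ with each $a_k$ a parameter element in $\mathfrak{b}(M)^2$, each $b_k$ a parameter element in $\mathfrak{b}(M)$, and each partial sum a parameter element. For a single summand $a_kb_k$ one has $E^i_{a_kb_k} = b_k E^i_{a_k}$ (Theorem~\ref{T2.14}(i)), and since $b_k \in \mathfrak{b}(M)$ annihilates $H^i_I(M)$ by Proposition~\ref{M3.1.11}, it annihilates the ambient group $\mathrm{Ext}^1_R(H^{i+1}_I(\overline{M}), H^i_I(M))$; hence $E^i_{a_kb_k} = 0$ (Theorem~\ref{T2.14}(ii)). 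Additivity of the extension class under forming a parameter element as a sum (Theorem~\ref{T2.13}(i)) then gives $E^i_x = \sum_k E^i_{a_kb_k} = 0$, which is the desired splitting.

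Your ladder of $x^n$-sequences cannot produce this. First, it addresses only powers of a single element, whereas an arbitrary $x \in \mathfrak{b}(M)^3$ is in general only a \emph{sum} of products; the passage from the special case to the general one requires precisely the decomposition and the additivity $E^i_x = \sum_k E^i_{a_kb_k}$, neither of which appears in your plan nor can be extracted from the ladder. Second, even in the power case the comparison you propose between the $x$- and $x^2$-sequences is genuinely uninformative: $E^i_{x^2} = xE^i_x$ vanishes for \emph{any} parameter element $x \in \mathfrak{b}(M)$, simply because $x$ kills $H^i_I(M)$ and hence the Ext group, and this says nothing about $E^i_x$ itself. You have noticed exactly this collapse (the multiplication map kills the left-hand term) and then assert, without an argument, that the third factor of $\mathfrak{b}(M)$ "forces the chase to close up." It does not; the third factor is spent in a different way. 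In the paper's decomposition, one factor goes to the annihilator $b_k \in \mathfrak{b}(M)$ and two factors go to $a_k \in \mathfrak{b}(M)^2$, the latter being needed so that $a_k$ itself satisfies condition $(\sharp)$ (this is part (i) of the theorem, again via the same decomposition plus Theorem~\ref{T2.13}); the accounting you give (one factor for $0:x$, one for the SES, one for the chase) does not correspond to a step anywhere in a correct proof. The essential missing ideas are Corollary~\ref{C2.16} and the additivity of extension classes from Theorem~\ref{T2.13}.
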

%In the case $I = \frak  m$ we have the following consequence.
%\begin{corollary}\label{C1.2} Let $x $ be a parameter element of $M$ contained in $\frak b(M)^3$. Then
%$$H^i_{\mathfrak{m}}(M/xM) \cong H^i_{\mathfrak{m}}(M) \oplus H^{i+1}_{\mathfrak{m}}(M/U_M(0))$$
%for all $i<d-1$, and
%$$0:_{H^{d-1}_{\mathfrak{m}}(M/xM)}\mathfrak{b}(M)  \cong H^{d-1}_{\mathfrak{m}}(M) \oplus
%0:_{H^{d}_{\mathfrak{m}}(M)}\mathfrak{b}(M).$$
%\end{corollary}
This splitting result leads a new kind of system of parameters $x_1, \ldots, x_d$ satisfying that
$$x_d \in \frak b(M)^3, x_{d-1} \in \frak b(M/x_dM)^3,  \ldots, x_1 \in \frak b(M/(x_2, \ldots, x_d)M)^3.$$
We call such a system of parameters a {\it $C$-system of parameters} of $M$. Similar to $p$-standard system of parameters, every finitely generated $R$-module admits $C$-systems of parameters if and only if $R$ is a quotient of a Cohen-Macaulay local ring. It should be noted that the right hand sides of the above isomorphisms do not depend of the choice of $C$-parameter element $x \in \frak b(M)^3$. Thus the local cohomology modules $H^i_I(M/xM)$, $i < d - \dim R/I-1$, are invariants (up to an isomorphism). As consequences, we can expect several invariant properties of quotient modules $M/(x_i, \ldots, x_d)M$ regarding $C$-systems of parameters. For example, by using the fact $U_M(0) = H^0_{\frak b(M)}(M)$, we generalize the unmixed theorem for any finitely generated $R$-module to get the second main result of this paper. 

\begin{theorem}\label{T1.3}
Let $M$ be a finitely generated $R$-module of dimension $d$ and $\underline{x} = x_1, \ldots, x_d$ a $C$-system of parameters of $M$. Then the unmixed component $U_{M/(x_{i+1},
\ldots,x_d)M}(0)$ is independent of the choice of
$\underline{x}$ for all $1 \leq i \leq d$ (up to an isomorphism).
\end{theorem}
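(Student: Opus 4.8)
The plan is to argue by induction on $d=\dim M$; the case $d\le 1$ is trivial, since the only module occurring in the list is $U_M(0)$, which does not involve $\underline x$.

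First I would record a concrete description of the modules in question. Write $M_i=M/(x_{i+1},\dots,x_d)M$, so that $M_d=M$, $\dim M_i=i$, and — by the very definition of a $C$-system of parameters — $x_i\in\mathfrak b(M_i)^3\subseteq\mathfrak b(M_i)$. Hence, by the quoted fact that $0:_Ny=U_N(0)$ whenever $y\in\mathfrak b(N)$,
\[
U_{M_i}(0)\;=\;0:_{M_i}x_i\;=\;\frac{(x_{i+1},\dots,x_d)M:_Mx_i}{(x_{i+1},\dots,x_d)M},
\qquad\text{and}\qquad U_{M_i}(0)=H^0_{\mathfrak b(M_i)}(M_i).
\]
In particular $U_{M_i}(0)$ does not involve $x_1,\dots,x_{i-1}$ at all. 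Next I would peel off the parameter $x_d$: putting $P=M/x_dM$, one sees directly from the definition that $x_1,\dots,x_{d-1}$ is a $C$-system of parameters of $P$, and $\dim P=d-1$. Applying the inductive hypothesis to $P$, each module $U_{P/(x_{i+1},\dots,x_{d-1})P}(0)=U_{M_i}(0)$ with $1\le i\le d-1$ is, up to isomorphism, independent of the $C$-system of parameters of the fixed module $P$. Together with the trivial case $i=d$, the theorem for $M$ therefore reduces to showing that the relevant isomorphism invariants of $P=M/x_dM$ — chiefly $U_{M/x_dM}(0)$, and secondarily the low-degree local cohomology modules of $P$ (and of $P/U_P(0)$) that the inductive application of the first step consumes — do not change when $x_d$ is replaced by another $C$-parameter $x_d'\in\mathfrak b(M)^3$.

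To compute $U_{M/x_dM}(0)$ I would invoke Theorem~\ref{T1.1}. The crucial point is the identification $U_{M/x_dM}(0)=H^0_{\mathfrak b(M)}(M/x_dM)$, which amounts to the inclusion of non-Cohen--Macaulay loci $\mathfrak b(M)\subseteq\sqrt{\mathfrak b(M/x_dM)+\Ann(M/x_dM)}$ — i.e.\ that cutting by the $C$-parameter $x_d$ does not enlarge the non-Cohen--Macaulay locus beyond the forced hyperplane section. The useful direction is the contrapositive: $M_{\mathfrak p}$ Cohen--Macaulay $\Rightarrow (M/x_dM)_{\mathfrak p}$ Cohen--Macaulay, which follows because $M_{\mathfrak p}$ Cohen--Macaulay forces $(U_M(0))_{\mathfrak p}=0$, hence $(M/x_dM)_{\mathfrak p}\cong M_{\mathfrak p}/x_dM_{\mathfrak p}$ with $x_d$ a nonzerodivisor parameter on the equidimensional module $M_{\mathfrak p}=(M/U_M(0))_{\mathfrak p}$. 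Granting this, and that $\dim R/\mathfrak b(M)\le d-2$ (which puts $i=0$ inside the degree range of Theorem~\ref{T1.1}), the theorem with $I=\mathfrak b(M)$ and $x=x_d$ yields
\[
U_{M/x_dM}(0)=H^0_{\mathfrak b(M)}(M/x_dM)\;\cong\;H^0_{\mathfrak b(M)}(M)\oplus H^1_{\mathfrak b(M)}(M/U_M(0))=U_M(0)\oplus H^1_{\mathfrak b(M)}(M/U_M(0)),
\]
whose right-hand side is built from $M$ alone and so is independent of $x_d$. The same device, applied to the higher $H^j_{\mathfrak b(M)}(M/x_dM)$, expresses the auxiliary invariants of $P$ as direct sums of local cohomology modules of $M$ and of $M/U_M(0)$, again free of $x_d$; feeding these back into the inductive step closes the induction.

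The hard part will be exactly this non-Cohen--Macaulay locus comparison together with the degree bookkeeping. When $M$ has a $(d-1)$-dimensional embedded associated prime one has $\dim R/\mathfrak b(M)=d-1$, which is outside the range of Theorem~\ref{T1.1} with $I=\mathfrak b(M)$; there the argument must be rerouted through the unmixed module $M/U_M(0)$ (which, being unmixed, automatically escapes this degenerate regime) via the exact sequence $0\to U_M(0)\to M/x_dM\to (M/U_M(0))/x_d(M/U_M(0))\to 0$, with care taken so that only the dimension-$\le d-2$ part survives in $U_{M/x_dM}(0)$. One must also check that cutting by a $C$-parameter behaves well when $M$ is non-equidimensional, and pin down precisely which low-degree local cohomology modules of the quotients $M/(x_{i+1},\dots,x_d)M$ must be tracked for the strengthened inductive hypothesis to be self-sustaining.
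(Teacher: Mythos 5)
Your route is genuinely different from the paper's and has a gap in the inductive step. The paper's proof of Theorem \ref{D3.2.9} is an induction on $d$ modelled on that of Theorem \ref{D3.2.7}: the $i=d-1$ case is exactly Lemma \ref{B3.2.8} (that $U_{M/xM}(0)$ is independent of the $C$-parameter $x$), and for $i<d-1$ one chooses a \emph{single} element $z$ that is simultaneously a $C$-parameter element of both $M_{i+1}=M/(x_{i+2},\dots,x_d)M$ and $M'_{i+1}=M/(y_{i+2},\dots,y_d)M$, replaces $x_{i+1}$ and $y_{i+1}$ by $z$ using Lemma \ref{B3.2.8} applied to $M_{i+1}$ and to $M'_{i+1}$, and then applies the inductive hypothesis to the one module $M/zM$ with the two $C$-parts $x_{i+2},\dots,x_d$ and $y_{i+2},\dots,y_d$ (Lemma \ref{B3.1.10}). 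The common $z$ bridges the two systems at every stage, so one never has to compare the full Cohen--Macaulay deviated sequences of two genuinely distinct quotient modules.

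Your scheme does exactly that comparison: peel off $x_d$, set $P=M/x_dM$, apply the inductive hypothesis to $P$, and then try to show the resulting invariants agree with those of $P'=M/y_dM$. This is where the induction fails to close. The splitting gives control of $U_{d-2}(P)=U_P(0)$, but the lower $U_i(P)$, $i<d-2$, depend on iterated quotients of $P$ and on radical ideals $\sqrt{\mathfrak b(\,\cdot\,)}$ that a priori move with $x_d$; you neither pin down which ``auxiliary invariants'' of $P$ and $P/U_P(0)$ must be tracked, nor establish their invariance, and the inductive hypothesis on $P$ only compares $C$-systems of the fixed module $P$, not $P$ with $P'$. Separately, the identification $U_{M/x_dM}(0)=H^0_{\mathfrak b(M)}(M/x_dM)$ requires $\dim R/\mathfrak b(M)\le d-2$ and fails when $M$ has a $(d-1)$-dimensional associated prime; the inclusion you record, $\mathfrak b(M)\subseteq\sqrt{\mathfrak b(M/x_dM)+\operatorname{Ann}(M/x_dM)}$, is the automatic one from Lemma \ref{B3.1.9}, not the one at issue, which is the reverse containment. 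The paper avoids this in Lemma \ref{B3.2.8} by working with $\mathfrak b'=\sqrt{\mathfrak b(M/xM)}$, which always has dimension at most $d-2$, and shows $\mathfrak b'$ is independent of $x$ via Corollary \ref{C invar ann}, which in turn rests on Theorem \ref{D3.2.7}; your rerouting ``through $M/U_M(0)$'' is only gestured at. As written the proposal does not constitute a proof.
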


The above theorem assigns to any finitely generated $R$-module $M$ of dimension $d$ a sequence of modules $U_0(M), \ldots, U_{d-1}(M)$, which satisfies that
$U_i(M) \cong U_{M/(x_{i+2}, \ldots,x_d)M}(0)$ for every $C$-system of parameters $x_1, \ldots, x_d$ of $M$. Notice that $M$ is Cohen-Macaulay if and only if $U_i(M) = 0$ for all $i = 0, \ldots, d-1$ by the unmixed theorem. This module sequence gives information about the distance between $M$ and the Cohen-Macaulayness. We call $U_0(M), \ldots, U_{d-1}(M)$ the {\it Cohen-Macaulay deviated sequence} of $M$. The name of Cohen-Macaulay deviated sequence comes from the notion of {\it Cohen-Macaulay deviation} of Vasconcelos in his theory of extended degrees.\\

Let $I$ be an $\frak m$-primary ideal. We denote by $\mathrm{deg}(I, M)$ the ordinary multiplicity of $M$ with respect to $I$, and call it the {\it degree} of $M$ with respect to $I$. The degree, $\mathrm{deg}(I, M)$, is a basic invariant that measures the complexity of $M$ with respect to $I$. Vasconcelos et al. \cite{DGV98, V98-1, V98-2}
introduced the notion of {\it extended degree} in order to capture the size of a module along with some of the complexity of its
structure. It is a numerical function on the category of finitely generated modules
over local or graded rings which generalizes the ordinary degree. Let $\mathcal{M}(R)$ be the category of finitely generated
$R$-modules. An {\it extended degree} on $\mathcal{M}(R)$
with respect to $I$ is a numerical function
$$ \mathrm{Deg}(I, \bullet) : \mathcal{M}(R) \to \mathbb{R} $$
satisfying the following conditions
\begin{enumerate}[{(i)}]
\item $\mathrm{Deg} (I, M) = \mathrm{Deg}(I, \overline{ M}) + \ell(H^0_{\frak
m}(M))$, where $\overline{M} = M/H^0_{\frak m}(M)$.
\item (Bertini's rule) $\mathrm{Deg}(I, M) \geq \mathrm{Deg}(I, M/xM)$ for every generic element $x \in I\setminus \frak mI$ of $M$.
\item If $M$ is Cohen-Macaulay then $\mathrm{Deg}(I, M) =
\mathrm{deg}(I, M)$ the multiplicity of $M$ with respect to ideal $I$.
\end{enumerate}
The difference $\mathrm{Deg} (I, M) - \mathrm{deg} (I, M)$ is called the {\it Cohen-Macaulay deviation} of $M$ with respect to $I$. The prototype of an extended degree is the {\it homological degree}, $\mathrm{hdeg}(I, M)$, was introduced and studied by Vasconselos in  \cite{V98-1} (see Definition \ref{D3.3.4}). Using the Cohen-Macaulay deviated sequence we introduce the {\it unmixed degree} of $M$ with respect to
$I$, and denoted by $\mathrm{udeg}(I, M)$. We define
$$\mathrm{udeg}(I, M) = \mathrm{deg}(I, M) + \sum_{i=0}^{d-1}\delta_{i, \dim U_i(M)}\mathrm{deg}(I, U_i(M)),$$
where $\delta_{i, \dim U_i(M)}$ is Kronecker's symbol. The unmixed degree is a natural generalization of the ordinary degree as well as the {\it arithmetic degree} (for the definition of arithmetic degree, $\mathrm{adeg}(I,M)$, we refer to Definition \ref{adeg}). We obtain the last main result of this paper.

\begin{theorem}\label{T1.4}
  The unmixed degree $\mathrm{udeg}(I, \bullet)$ is an extended degree on the category of finitely generated $R$-modules $\mathcal{M}(R)$.
\end{theorem}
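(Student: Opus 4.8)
The plan is to verify, for $\mathrm{Deg}(I,\bullet)=\mathrm{udeg}(I,\bullet)$, the three defining properties (i)--(iii) of an extended degree. Property (iii) is immediate: if $M$ is Cohen--Macaulay then by the unmixed theorem every member $U_0(M),\ldots,U_{d-1}(M)$ of its Cohen--Macaulay deviated sequence vanishes, so the correction sum is empty and $\mathrm{udeg}(I,M)=\mathrm{deg}(I,M)$. Property (i) is routine bookkeeping: for $d=0$ both sides equal $\ell(M)$; for $d\ge1$ one has $\mathrm{deg}(I,M)=\mathrm{deg}(I,\overline M)$, a $C$-system of parameters $\underline{x}=x_1,\ldots,x_d$ of $M$ is again a $C$-system of $\overline M=M/H^0_{\frak m}(M)$ (a routine check using $\frak b(M)\subseteq\frak b(\overline M)$, which follows from Schenzel's inclusions, and the analogous inclusions for the successive quotients), and, combining $U_i(M)\cong U_{M/(x_{i+2},\ldots,x_d)M}(0)$ (Theorem \ref{T1.3}) with the exact sequence $0\to H^0_{\frak m}(M)\to M\to\overline M\to0$, the modules $U_i(M)$ and $U_i(\overline M)$ differ only by a submodule of finite length --- harmless for the $\mathrm{udeg}$-contribution when $i\ge1$, and accounting for exactly $\ell(H^0_{\frak m}(M))$ when $i=0$ (where the unmixed component is itself of finite length).

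The substance of the proof is property (ii), Bertini's rule $\mathrm{udeg}(I,M)\ge\mathrm{udeg}(I,M/xM)$ for a generic element $x\in I\setminus\frak mI$ of $M$. The obstacle is that such an $x$ need not lie in $\frak b(M)^3$, so it is not a $C$-parameter and one cannot directly invoke the splittings of Theorem \ref{T1.1} and Corollary \ref{C1.2} nor the identity $U_j(M/x_dM)\cong U_j(M)$ valid for the leading member $x_d$ of a $C$-system. I would induct on $d$. The base case $d\le1$ is a direct length count: for $d=1$, $\mathrm{udeg}(I,M)=\mathrm{deg}(I,M)+\ell(H^0_{\frak m}(M))=\ell(M/xM)-\ell(0:_M x)+\ell(H^0_{\frak m}(M))$ while $\mathrm{udeg}(I,M/xM)=\ell(M/xM)$, so the inequality becomes $\ell(H^0_{\frak m}(M))\ge\ell(0:_M x)$, which holds because a generic $x$ is filter-regular, i.e.\ $0:_M x$ has finite length and is therefore contained in $H^0_{\frak m}(M)$. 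For $d\ge2$ a generic $x$ is superficial for $I$ and filter-regular, so $\mathrm{deg}(I,M)=\mathrm{deg}(I,M/xM)$, and the task reduces to proving
\[
\sum_{j=0}^{d-1}\delta_{j,\dim U_j(M)}\,\mathrm{deg}(I,U_j(M))\ \ge\ \sum_{j=0}^{d-2}\delta_{j,\dim U_j(M/xM)}\,\mathrm{deg}(I,U_j(M/xM)).
\]

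The crux is a \emph{comparison lemma}: for a generic $x$, the Cohen--Macaulay deviated sequence of $M/xM$ is dominated by that of $M$ with the index shifted down by one. Concretely, I would show that $U_j(M/xM)$ is, up to a submodule of finite length, a generic hyperplane section of $U_{j+1}(M)$, so that $\dim U_j(M/xM)\le\max(\dim U_{j+1}(M)-1,0)$ and, whenever both $\dim U_j(M/xM)$ and $\dim U_{j+1}(M)$ equal their respective indices, the multiplicities $\mathrm{deg}(I,U_j(M/xM))$ and $\mathrm{deg}(I,U_{j+1}(M))$ coincide; the finite-length leftovers --- notably $U_0(M)$, which is lost in passing to $M/xM$, and the torsion produced at the bottom --- are absorbed into the slack $\ell(H^0_{\frak m}(M))-\ell(0:_M x)\ge0$ just as in the base case. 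Summing these term-by-term inequalities (together with $\mathrm{deg}(I,M)=\mathrm{deg}(I,M/xM)$) gives the displayed estimate, hence (ii). To establish the comparison lemma I would use the intrinsic description $U_N(0)=H^0_{\frak b(N)}(N)$ for the relevant quotients $N$ and track it under a generic cut via the local-cohomological splitting of Theorem \ref{T1.1}, or, dually, work with the deficiency modules $K^i(N)=H^i_{\frak m}(N)^{\vee}$ (finitely generated in the standard setting) and the exact sequence
\[
0\to K^{i+1}(M)/xK^{i+1}(M)\to K^{i}(M/xM)\to(0:_{K^{i}(M)}x)\to0\qquad(i\ge1)
\]
deduced from $0\to(0:_M x)\to M\overset{x}{\to}M\to M/xM\to0$, for which a filter-regular $x$ affects dimensions and top-dimensional multiplicities only in the expected way.

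The step I expect to be the main obstacle is precisely this comparison of Cohen--Macaulay deviated sequences under a generic hyperplane section. Because the modules $U_j$ are built from $C$-systems of parameters rather than by a manifestly functorial construction, one must understand how a generic (non-$C$) cut interacts with an iterated passage to unmixed components; getting the dimension and multiplicity inequalities right, correctly matching each $U_j(M/xM)$ with the dominating $U_{j+1}(M)$, and handling the boundary indices $j=d-2$ and $j=0$ (where the finite-length quantities $\ell(H^0_{\frak m})$ and $\ell(0:_M x)$ enter) is where the real care is needed.
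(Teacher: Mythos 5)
Your overall strategy matches the paper's: verify the three axioms, with (iii) immediate, (i) via finite\-/length reduction, and (ii) --- the Bertini rule --- as the substantive part, isolated as a comparison lemma for a generic hyperplane section of the Cohen--Macaulay deviated sequence. The paper organizes this into Proposition \ref{M3.3.9}, Theorem \ref{D3.3.8}, and Theorem \ref{D3.3.17}.

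Two points, however, are genuine gaps. First, for property (i) you assert $\frak b(M)\subseteq\frak b(\overline M)$ where $\overline M=M/H^0_{\frak m}(M)$, ``which follows from Schenzel's inclusions.'' It does not: Schenzel's inclusions compare $\frak a(M)$, $\frak b(M)$, and $\bigcap_i\frak a_i(M)$ \emph{for a fixed module} $M$, and while they do give $\frak a(M)\subseteq\frak a(\overline M)$ (since $H^i_{\frak m}(M)=H^i_{\frak m}(\overline M)$ for $i\ge1$), there is no obvious inclusion between $\frak b(M)$ and $\frak b(\overline M)$ --- the map $(0:x_i)_{M/(x_1,\ldots,x_{i-1})M}\to(0:x_i)_{\overline M/(x_1,\ldots,x_{i-1})\overline M}$ is not surjective in general. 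The paper sidesteps this entirely: in Proposition \ref{M3.3.9} it simply chooses, by prime avoidance, a single system of parameters that is a $C$-system for both $M$ and $M/N$, and then uses $U_j(M/N)\cong U_j(M)/N$. Second, you correctly identify the ``comparison lemma'' as the crux of (ii) and even predict its content (top-dimensional multiplicities match; the finite-length leftover at the bottom is what is lost), but you do not prove it. The paper's version is Lemma \ref{B3.3.15}, whose substance is the exact sequence of Proposition \ref{M3.2.19},
\[
0\to U_M(0)/xU_M(0)\to U_{M/xM}(0)\to H^0_{\frak m}(\overline M/x\overline M)\to 0,
\]
valid when $x$ avoids $\operatorname{Ass}U_M(0)\cup\operatorname{Ass}U_{d-2}(M)$ away from $\frak m$; proving this requires the structural facts from Section 4, in particular Remark \ref{C3.2.18}. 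Finally, rather than inducting on $d$ as you propose, the paper (via Lemma \ref{B3.3.16}) constructs a single part of a $C$-system $x_2,\ldots,x_d$ of $M$ with $x,x_2,\ldots,x_d$ also a system of parameters, writes both $\mathrm{udeg}(I,M)$ and $\mathrm{udeg}(I,M/xM)$ in terms of the quotients $M/(x_j,\ldots,x_d)M$, and applies Lemma \ref{B3.3.15} at each layer; the slack comes exactly, as you predict, from the lost $U_0(M)$ term (via Corollary \ref{H3.2.5} at the $d=2$ stage). Your alternative route through deficiency modules $K^i$ is not the one taken in the paper, and it is not developed far enough here to assess whether it closes the gap on its own.
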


The paper is organized as follows. In the next section we collect useful results about the annihilator of local cohomology, the unmixed component and some special systems of parameters. We also mention the method of \cite{CQ11} to study the splitting of local cohomology. Section 3 is devoted to the splitting of local cohomology stated in Theorem \ref{T1.1} (see Theorem \ref{D3.2.4} and Corollary \ref{H3.2.5}). Then we introduce the notion of $C$-system of parameters, that plays a central role in this paper. Theorem \ref{T1.3} is proved in Section 4. We also prove the invariance of local cohomology of quotient modules regarding $C$-systems of parameters (cf. Theorem \ref{D3.2.7}). Some applications of the Cohen-Macaulay deviated sequence are given. The unmixed degree will be introduced in Section 5. Theorem \ref{T1.4} follows from Proposition \ref{M3.3.9}, Theorems \ref{D3.3.8} and \ref{D3.3.17}. The most difficult point is to prove the Bertini rule of unmixed degree. For that we show that for certain {\it superficial element} $x$ of $M$ with respect to $I$ we have $\mathrm{udeg}(I, M/xM) \le \mathrm{udeg}(I, M)$. We also compare the unmixed degree with the ordinary degree, the arithmetic degree and the homological degree.
\begin{acknowledgement}
	The authors are deeply grateful to the referee for carefully reading the manuscript and for her/his excellent comments. This paper was written during several visits of the second author at the Vietnam Institute for Advanced Study in Mathematics (VIASM). He would like to thank the VIASM for the very kind support and hospitality. 
\end{acknowledgement}
\section{Preliminaries}
 We start with the notion of the annihilator of local cohomology which will be used frequently in this paper.
\begin{notation} \rm Let $(R, \frak m)$ be a Noetherian local ring and $M$ a finitely generated $R$-module of dimension $d>0$.
\begin{enumerate}[{(i)}]
\item For all $i < d$ we set  $\frak a_i(M) =
\mathrm{Ann}H^{i}_\mathfrak{m}(M)$, and set $\frak a(M) = \frak
a_0(M) \ldots \frak a_{d-1}(M)$.
\item Put $\mathfrak{b}(M) = \bigcap_{\underline{x};i=1}^d
\mathrm{Ann}(0:x_i)_{M/(x_1,\ldots ,x_{i-1})M}$ where $\underline{x} =
x_1, \ldots, x_d$ runs over all systems of parameters of $M$.
\end{enumerate}
\end{notation}
\begin{remark}\label{C3.1.2}\rm
\begin{enumerate}[{(i)}]
\item Schenzel \cite[Satz 2.4.5]{Sch82} proved that
$$\mathfrak{a}(M) \subseteq \mathfrak{b}(M) \subseteq \mathfrak{a}_0(M) \cap \cdots \cap \mathfrak{a}_{d-1}(M).$$
\item If $R$ is the homomorphic image of a Cohen-Macaulay local ring, then $\dim R/\mathfrak{a}_i(M) \leq
i$ for all $i< d$ \cite[Theorem 1.3]{CC17}. Furthermore, $\dim R/\mathfrak{a}_i(M) = i$ if and only if there exists $\frak p
\in \mathrm{Ass}M$ such that $\dim R/\frak p = i$ (see \cite[Theorem
8.1.1]{BH98}).
\item If $R$ is the homomorphic image of a Cohen-Macaulay local ring, then Faltings' annihilator theorem claims that $\frak p \in
\mathrm{supp}(M)$ and $\frak p \notin V(\frak a(M))$ if and only if
$M_{\frak p}$ is Cohen-Macaulay and $\dim M_{\frak p} + \dim R/\frak
p = d$ (see \cite[9.6.6]{BS98}).
\item The condition that $R$ is the homomorphic image of a Cohen-Macaulay local ring can not be removed in  (ii) and (iii) by Nagata's example \cite[Example 2, pp. 203$-$205]{N62}.
\end{enumerate}
\end{remark}
Since we always assume that $(R, \frak m)$ is the homomorphic image of a Cohen-Macaulay local ring, Remark \ref{C3.1.2} (ii)
ensures $\dim R/\frak a(M) < d$. Therefore we can choose a parameter element $x \in \frak a(M)$. Following \cite{C95} such a parameter element is called {\it $p$-standard}.
\begin{definition}\rm  A system of parameters $x_1,\ldots,x_d$ of $M$ is called {\it $p$-standard} if $x_d \in \frak
a(M)$ and $x_i \in \frak a(M/(x_{i+1},\ldots,x_d)M)$ for all $i =
d-1,\ldots,1$.
\end{definition}
We recall a property of $p$-standard system of parameters which will be used in the sequel. Let
$\underline{x} = x_1,\ldots,x_d$ be a system of parameters of $M$. Let $\underline{n} = (n_1,\ldots,n_d)$ be a
$d$-tuple of positive integers and $\underline{x}^{\underline{n}} =
x_1^{n_1},\ldots,x_d^{n_d}$. We consider the difference
$$I_{M,\underline{x}}(\underline{n}) = \ell(M/(\underline{x}^{\underline{n}})M) -
e(\underline{x}^{\underline{n}};M)$$ as function in $\underline{n}$,
where $e(\underline{x};M)$ is the Serre multiplicity of $M$ with
respect to the sequence $\underline{x}$. Although
$I_{M,\underline{x}}(\underline{n})$ may be not a polynomial for
$n_1,\ldots,n_d$ large enough, it is bounded above by polynomials.
Moreover, the first author of this paper in \cite{C91} proved that the least degree of
all polynomials in $\underline{n}$ bounding above
$I_{M,\underline{x}}(\underline{n})$ is independent of the choice of
$\underline{x}$, and it is denoted by $p(M)$. The invariant $p(M)$
is called the {\it polynomial type} of $M$.
If $(R, \frak m)$ is the homomorphic image of a Cohen-Macaulay local ring, then $p(M) = \dim R/\frak a(M)$ when $M$ is equidimensional (see \cite{C92}). In addition, if
$\underline{x} = x_1,\ldots,x_d$ is $p$-standard then
we have the following.
\begin{proposition}[\cite{C95}, Theorem 2.6 (ii)]\label{M3.1.4}
Let $x_1,\ldots,x_d$ be a $p$-standard system of parameters of $M$. Then for all $n_1,\ldots,n_d>0$ we have
$$I_{M,\underline{x}}(\underline{n}) =
\sum_{i=0}^{p(M)} n_1\ldots n_i e_i,$$ where
$e_i = e(x_1,\ldots,x_i; 0:_{M/(x_{i+2},\ldots,x_d)M}x_{i+1})$ and $e_0 =
\ell(0:_{M/(x_{2},\ldots,x_d)M}x_{1})$.
\end{proposition}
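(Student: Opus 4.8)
The plan is to reduce the statement to a telescoping identity for the difference of length and multiplicity that holds for \emph{every} system of parameters, and then to exploit the special features of $p$-standard systems. For a module $L$ of dimension $e$ with a system of parameters $\underline z = z_1, \ldots, z_e$, write $J(L, \underline z) = \ell(L/\underline z L) - e(\underline z; L)$, so that $I_{M, \underline x}(\underline n) = J(M, x_1^{n_1}, \ldots, x_d^{n_d})$. First I would prove, by induction on $d$ and for an \emph{arbitrary} system of parameters $z_1, \ldots, z_d$ of $M$, that
\[
J(M, z_1, \ldots, z_d) = \sum_{i=0}^{d-1} e\bigl(z_1, \ldots, z_i;\, 0:_{M/(z_{i+2}, \ldots, z_d)M} z_{i+1}\bigr),
\]
where the $i = 0$ term means $\ell(0:_{M/(z_2, \ldots, z_d)M} z_1)$ and $M/(z_{i+2}, \ldots, z_d)M = M$ when $i = d-1$. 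The inductive step combines the trivial equality $\ell(M/\underline z M) = \ell\bigl((M/z_dM)/(z_1, \ldots, z_{d-1})(M/z_dM)\bigr)$ with the classical reduction formula $e(z_1, \ldots, z_d; M) = e(z_1, \ldots, z_{d-1}; M/z_dM) - e(z_1, \ldots, z_{d-1}; 0:_M z_d)$, obtained by comparing the Koszul Euler characteristics of $M$, $M/z_dM$ and $0:_M z_d$; subtracting these gives the recursion $J(M, z_1, \ldots, z_d) = J(M/z_dM, z_1, \ldots, z_{d-1}) + e(z_1, \ldots, z_{d-1}; 0:_M z_d)$, and iterating it down to dimension zero, where $J$ vanishes, yields the displayed identity. (In dimension one this is simply $J(M, z_1) = \ell(0:_M z_1)$, i.e. $e(z_1; M) = \ell(M/z_1M) - \ell(0:_M z_1)$.)

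I would then specialise to $z_j = x_j^{n_j}$, with $x_1, \ldots, x_d$ a $p$-standard system of parameters of $M$, and invoke the structure theory of such systems. Since $x_1^{n_1}, \ldots, x_d^{n_d}$ is again $p$-standard, for each $i$ the element $x_{i+1}^{n_{i+1}}$ lies in $\frak a(N_i) \subseteq \frak b(N_i)$, where $N_i := M/(x_{i+2}^{n_{i+2}}, \ldots, x_d^{n_d})M$; hence, using that $0:x$ equals the unmixed component for any $x \in \frak b(N_i)$, the $i$-th colon module in the identity above is exactly $U_{N_i}(0) = H^0_{\frak b(N_i)}(N_i)$. The crucial point is that for a $p$-standard system these unmixed components are \emph{stable under raising the parameters to powers}, up to an isomorphism invisible to the multiplicity with respect to $x_1, \ldots, x_i$; combining this with the homogeneity $e(x_1^{n_1}, \ldots, x_i^{n_i}; K) = n_1 \cdots n_i\, e(x_1, \ldots, x_i; K)$ for any module $K$, the $i$-th summand becomes $n_1 \cdots n_i\, e_i$ with $e_i$ as in the statement, and therefore $I_{M, \underline x}(\underline n) = \sum_{i=0}^{d-1} n_1 \cdots n_i\, e_i$. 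Finally, each $e_i$ is a nonnegative multiplicity while the left-hand side is, by the definition of the polynomial type, bounded above by polynomials in $\underline n$ of degree $p(M)$; putting $n_1 = \cdots = n_d \to \infty$ then forces $e_i = 0$ for every $i > p(M)$, so the sum collapses to $\sum_{i=0}^{p(M)} n_1 \cdots n_i\, e_i$, which is what we wanted.

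The main obstacle is the stability, just used, of the colon (sub-quotient) modules of a $p$-standard system under replacing each parameter by a power. This is the ``$dd$-sequence'' phenomenon — a $p$-standard system remains a $d$-sequence, with the associated colon ideals unchanged, after arbitrary powering — and I would isolate it as a separate lemma, deduced either from the $d$-sequence calculus or from the local cohomology splitting behaviour of $p$-standard parameters discussed above; the remaining ingredients (the reduction formula for multiplicities, the stability of $p$-standardness under powers, and the degree bound furnished by the polynomial type) are comparatively routine.
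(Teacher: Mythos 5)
The paper does not prove this statement internally --- it is quoted verbatim from reference [C95], Theorem~2.6(ii) --- so there is no in-paper proof to compare against directly. However, the paper proves a parallel result, Proposition~\ref{M3.2.13}, for $C$-systems of parameters, and your outline tracks that argument closely: first the Auslander--Buchsbaum telescoping identity $\ell(M/\underline{z}M) - e(\underline{z};M) = \sum_{i=0}^{d-1} e(z_1,\ldots,z_i;\, 0:_{M/(z_{i+2},\ldots,z_d)M}z_{i+1})$ valid for every system of parameters (the paper invokes [AB58, Corollary~4.3] for exactly this in its proof of Proposition~\ref{M3.2.13}); then a stability statement for the colon modules arising from the powered parameters; then homogeneity of the Koszul multiplicity and a degree argument to truncate the sum at $i \le p(M)$. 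That skeleton is sound and the telescoping step and the degree-collapse step are both correct.

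Two points need attention. First, your assertion that $x_1^{n_1},\ldots,x_d^{n_d}$ is again $p$-standard for arbitrary $n_j>0$ is false: the paper's own Remark~\ref{R dd seq}(i) records that powers of a $dd$-sequence are guaranteed to be $p$-standard only when $n_j\ge j$. What you actually use is that $x_{i+1}^{n_{i+1}}$ lies in $\mathfrak{b}(M/(x_{i+2}^{n_{i+2}},\ldots,x_d^{n_d})M)$, and this is a genuine invariance of $\mathfrak{b}$ under powering of the later parameters. For $C$-systems the paper establishes precisely this in Corollary~\ref{H3.2.11}; for $p$-standard systems you would need a corresponding statement, not the (false) stability of $p$-standardness itself. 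Second, the isomorphism (or at least multiplicity-equality) of the colon modules $0:_{M/(x_{i+2}^{n_{i+2}},\ldots,x_d^{n_d})M}x_{i+1}^{n_{i+1}}$ as $\underline{n}$ varies, which you flag as the ``main obstacle,'' is indeed where the entire substance of [C95]'s Theorem~2.6 resides. Deferring it to ``the $d$-sequence calculus'' or ``a separate lemma'' leaves the one nontrivial step unproved; note that in the paper's own treatment of the $C$-case, the corresponding invariance (Theorem~\ref{D3.2.9}) rests on the local-cohomology splitting machinery of Section~3 and is not an elementary consequence of Huneke-style $d$-sequence identities. So your plan has the right shape, mirroring the paper's Proposition~\ref{M3.2.13}, but the crucial invariance lemma is precisely what must be proved, and the one explicit claim you offer in its direction is incorrect as stated.
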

In \cite{CC07-1}, D.T. Cuong and the first author introduced the notion of {\it $dd$-sequence} which is a special case of the notion of {\it $d$-sequences} of Huneke.
\begin{definition}[\cite{Hu82,GY86}]\rm
A sequence of elements $\underline{x} = x_1,\ldots,x_s$ is called a {\it
$d$-sequence} of $M$ if $(x_1,\ldots,x_{i-1})M:x_j =
(x_1,\ldots,x_{i-1})M:x_ix_j$ for all $i \leq j \leq s$.
A sequence $\underline{x} = x_1,\ldots,x_s$ is called a {\it
strong $d$-sequence} if $\underline{x}^{\underline{n}} =
x_1^{n_1},\ldots,x_s^{n_s}$ is a $d$-sequence for all $\underline{n} =
(n_1,\ldots,n_s) \in \mathbb{N}^s$.
\end{definition}

For important properties of $d$-sequence we refer to
\cite{Hu82,Tr83}.

\begin{definition}[\cite{CC07-1}]\rm
 A sequence of elements $\underline{x} = x_1,\ldots,x_s$ is call a {\it
$dd$-sequence} of $M$ if $\underline{x}$ is a strong $d$-sequence of
$M$ and the following conditions are satisfied
\begin{enumerate}[{(i)}]
\item $s=1$ or,
\item $s>1$ and $\underline{x}' = x_1,\ldots,x_{s-1}$ is a $dd$-sequence of $M/x_s^n$ for all $n \geq 1$.
\end{enumerate}
\end{definition}
The following is a characterization of $dd$-sequence in terms of $I_{M,\underline{x}}(\underline{n})$ (\cite[Theorem 1.2]{CC07-1}).
\begin{proposition}\label{M3.1.7}
A system of parameters $\underline{x} = x_1,\ldots,x_d$ of $M$ is a $dd$-sequence if and only if for all $n_1,\ldots,n_d>0$ we have
$$I_{M,\underline{x}}(\underline{n}) =
\sum_{i=0}^{p(M)} n_1\ldots n_i e_i,$$ where
$e_i = e(x_1,\ldots,x_i; 0:_{M/(x_{i+2},\ldots,x_d)M}x_{i+1})$ and $e_0 =
\ell(0:_{M/(x_{2},\ldots,x_d)M}x_{1})$.
\end{proposition}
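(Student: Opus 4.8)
The plan is to induct on $d=\dim M$, the engine being a reduction formula that removes the last parameter. Write $\underline x'=x_1,\dots,x_{d-1}$ and $\underline n'=(n_1,\dots,n_{d-1})$. Since $M/\underline x^{\underline n}M=(M/x_d^{n_d}M)\big/(x_1^{n_1},\dots,x_{d-1}^{n_{d-1}})(M/x_d^{n_d}M)$ and $x_d$ is a parameter (so $\dim(0:_Mx_d^{n_d})\le d-1$), the standard additivity and homogeneity of Serre multiplicities give $e(x_1^{n_1},\dots,x_{d-1}^{n_{d-1}};M/x_d^{n_d}M)=n_1\cdots n_{d-1}\big(n_d\,e(\underline x;M)+e(x_1,\dots,x_{d-1};0:_Mx_d^{n_d})\big)$, which combines with $e(\underline x^{\underline n};M)=n_1\cdots n_d\,e(\underline x;M)$ to yield
$$I_{M,\underline x}(\underline n)=I_{M/x_d^{n_d}M,\ \underline x'}(\underline n')+n_1n_2\cdots n_{d-1}\cdot e\big(x_1,\dots,x_{d-1};\,0:_Mx_d^{n_d}\big).$$
Iterating this down the sequence (valid for \emph{every} system of parameters) yields the closed expression
$$I_{M,\underline x}(\underline n)=\sum_{i=0}^{d-1}n_1\cdots n_i\cdot e\big(x_1,\dots,x_i;\ (0:x_{i+1}^{n_{i+1}})_{M/(x_{i+2}^{n_{i+2}},\dots,x_d^{n_d})M}\big),$$
whose coefficient at $\underline n=(1,\dots,1)$ is exactly $e_i$ (and these vanish for $i>p(M)$). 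The base case $d=1$ reads $I_{M,\underline x}(n_1)=\ell(0:_Mx_1^{n_1})$, and the right-hand side of the proposition is then the constant $\ell(0:_Mx_1)$, so the asserted identity holds for all $n_1$ precisely when the increasing modules $0:_Mx_1^{n_1}$ are all equal, i.e.\ when $x_1$ is a strong $d$-sequence, i.e.\ a $dd$-sequence.

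For $d>1$ the statement reduces, via coefficient extraction from the closed expression, to: $\underline x$ is a $dd$-sequence iff every coefficient $e\big(x_1,\dots,x_i;\,(0:x_{i+1}^{n_{i+1}})_{M/(x_{i+2}^{n_{i+2}},\dots,x_d^{n_d})M}\big)$ is independent of the exponents. For the implication ``$dd$-sequence $\Rightarrow$ formula'' I would use that, by definition, $\underline x'$ is a $dd$-sequence of $M/x_d^{n_d}M$ for every $n_d$ --- so the induction hypothesis evaluates $I_{M/x_d^{n_d}M,\underline x'}(\underline n')$ --- and that $\underline x$, being recursively a strong $d$-sequence on all the quotients $M/(x_{i+2},\dots,x_d)M$, makes the colon-module multiplicities insensitive to the exponents; plugging into the reduction formula then gives $I_{M,\underline x}(\underline n)=\sum_{i=0}^{p(M)}n_1\cdots n_i\,e_i$. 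For the converse ``formula $\Rightarrow$ $dd$-sequence'' I would invoke the general properties of $I$ established by the first author (\cite{C91,C92,C95}) and in \cite{CC07-1} --- it is non-decreasing in each $n_j$, bounded above by polynomials, and bounded below by $\sum_i n_1\cdots n_i\,e_i(\underline y;N)$ for every system of parameters $\underline y$ of every $N$ --- so that the assumed equality for $M$, together with the reduction formula, forces equality at every stage of the extraction. From this one reads off, on the one hand (via the induction hypothesis applied to $M/x_d^{n_d}M$), that $\underline x'$ is a $dd$-sequence of $M/x_d^{n_d}M$ for each $n_d$, and on the other (from the exponent-independence of the coefficients) that $\underline x$ is a strong $d$-sequence of $M$ --- the two clauses in the definition of a $dd$-sequence.

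The hard part will be the exponent-collapse for the coefficient multiplicities: showing, for a $dd$-sequence, that $e\big(x_1,\dots,x_i;\,(0:x_{i+1}^{n_{i+1}})_{M/(x_{i+2}^{n_{i+2}},\dots,x_d^{n_d})M}\big)=e\big(x_1,\dots,x_i;\,(0:x_{i+1})_{M/(x_{i+2},\dots,x_d)M}\big)$, and conversely that this equality for all exponents is exactly what the strong $d$-sequence relations at each recursive level amount to. One cannot argue by comparing the colon modules directly --- they genuinely grow with the exponents --- but must show the growth lives in dimension $<i$, so that $e(x_1,\dots,x_i;-)$ is blind to it; this is precisely where the relations $(x_1^{m_1},\dots,x_{i-1}^{m_{i-1}})N:x_i=(x_1^{m_1},\dots,x_{i-1}^{m_{i-1}})N:x_i^{m_i}$ (for $N$ the appropriate iterated quotients of $M$) are used. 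Once the reduction formula, the closed expression for $I_{M,\underline x}(\underline n)$, and the monotonicity/bounding properties of $I$ are in hand, the remainder is bookkeeping; and restricting the ``$dd$-sequence $\Rightarrow$ formula'' direction to $p$-standard systems of parameters (which are $dd$-sequences) recovers Proposition~\ref{M3.1.4}.
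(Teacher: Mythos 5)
First, for context: the paper does not prove Proposition~\ref{M3.1.7} at all; it states it as a citation to \cite[Theorem~1.2]{CC07-1}. So there is no in-paper proof to compare your attempt against, and your attempt must be judged on its own as a reconstruction of the argument of \cite{CC07-1}.

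Your reduction formula $I_{M,\underline x}(\underline n)=I_{M/x_d^{n_d}M,\underline x'}(\underline n')+e(\underline x'^{\underline n'};0:_Mx_d^{n_d})$ is correct (it is the standard Lech/Auslander--Buchsbaum device), and iterating it gives the valid closed expansion $I_{M,\underline x}(\underline n)=\sum_{i=0}^{d-1}n_1\cdots n_i\,e\bigl(x_1,\dots,x_i;(0:x_{i+1}^{n_{i+1}})_{M/(x_{i+2}^{n_{i+2}},\dots,x_d^{n_d})M}\bigr)$ for \emph{every} system of parameters; the $d=1$ base case is also fine. But the entire content of the proposition is the equivalence of ``each coefficient is constant in the exponents'' with ``$\underline x$ is a $dd$-sequence,'' and this you only name (``the hard part'') without proving. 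Worse, your converse step is not sound as stated: from constancy of $c_i=e\bigl(x_1,\dots,x_i;(0:x_{i+1}^{n_{i+1}})_{\cdots}\bigr)$ for $i\geq 1$ you claim to ``read off \dots\ that $\underline x$ is a strong $d$-sequence of $M$,'' but a constant multiplicity along an increasing chain of submodules of dimension $\leq i$ does \emph{not} force the chain to stabilize --- the colon modules can grow strictly while their top-dimensional part (hence $e(x_1,\dots,x_i;-)$) stays fixed --- whereas the strong $d$-sequence condition is an actual equality of submodules. Only for $i=0$, where $c_0$ is a length, does constancy give stabilization. So the converse direction needs a genuinely different mechanism (in \cite{CC07-1} this is handled by a careful analysis of partial Euler--Poincar\'e characteristics of Koszul complexes), and the forward direction's ``exponent-collapse'' lemma still needs to be proved, not just asserted. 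The invocation of ``monotonicity/bounding properties of $I$'' is also a red herring here --- coefficient extraction from the closed expansion is a pure polynomial-identity argument and needs no such input. In short: correct skeleton, but the two load-bearing implications are missing, and one of them is sketched in a way that would fail.
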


\begin{remark}\rm \label{R dd seq}
\begin{enumerate}[{(i)}]
\item By Propositions \ref{M3.1.4} and \ref{M3.1.7}, if a system of parameter  $x_1,\ldots,x_d$ of $M$ is $p$-standard, then it is a $dd$-sequence. Conversely, if $x_1,\ldots,x_d$ is a $dd$-sequence then $x_1^{n_1},\ldots,x_d^{n_d}$ with $n_i \geq i, i = 1, \ldots,d$,
is $p$-standard (see \cite[Section
3]{CC07-1}).
\item  An $R$-module $M$ admits a $p$-standard (or $dd$-sequence) system of parameters if and only if $R/\mathrm{Ann}M$ is the homomorphic image of a
Cohen-Macaulay local ring  \cite[Theorem 1.3]{CC17}.
\end{enumerate}
\end{remark}
We next recall the notion of {\it unmixed component} of $M$ and its relations with the ideal $\frak b(M)$.
\begin{definition}\label{Dn3.2.1} \rm The largest submodule of $M$ of dimension less than $d$ is called the {\it unmixed component} of $M$, and denoted by $U_M(0)$.
\end{definition}

\begin{remark}\label{C3.2.2} \rm
\begin{enumerate}[{(i)}]
\item If $\cap_{\frak p \in \mathrm{Ass}M}N(\frak p) = 0$ is a reduced primary decomposition of the zero submodule of $M$, then
$U_M(0) = \cap_{\frak p \in \mathrm{Assh}M}N(\frak p)$, where  $\mathrm{Assh}M = \{\frak p \in \mathrm{Ass}M \mid \dim R/\frak p = d\}$.
\item Since $\dim U_M(0) < d$, there exists a parameter element $x$
of $M$ contained in $\mathrm{Ann}\, U_M(0)$. Therefore $U_M(0) \subseteq 0:x$.
But $x$ is a parameter element, so $\dim (0:x) < d$. Hence
$U_M(0) = 0:x$. Following the definition of $\frak b(M)$ we have $\frak b(M)
\subseteq \mathrm{Ann}U_M(0)$. Thus if $x \in \frak b(M)$ is a parameter element of $M$ then $U_M(0) = 0:x$. We also have $U_M(0) \cong H^0_{\frak b(M)}(M)$.
\item By (ii) we have $\cap_{x} \mathrm{Ann}(0:_Mx) =
\mathrm{Ann}U_M(0)$, where $x$ runs over all parameter elements of $M$. Therefore
\begin{eqnarray*}
\mathfrak{b}(M) &=& \bigcap_{\underline{x};i=1}^d
\mathrm{Ann}\,(0:x_i)_{M/(x_1,\ldots,x_{i-1})M}\\
&=& \bigcap_{\underline{x};i=1}^d
\mathrm{Ann}\,U_{M/(x_1,\ldots,x_{i-1})M}(0),
\end{eqnarray*}
where $\underline{x} = x_1,\ldots,x_d$ runs over all systems of parameters of $M$.
\end{enumerate}
\end{remark}

Problem of the splitting of local cohomology is started in \cite{CQ11}. For convenience we recall some results of \cite{CQ11} (with slight generalizations).
Suppose we are given an integer $t$, an ideal $\frak
a$ of $R$ and a submodule $U$ of $M$. Set $\overline{M} =M/U$. We
say that an element $x \in \mathfrak{a}$ satisfies the {\it (SES) condition} at degrees $i < t - 1$, the {\it short exact sequence condition} for local cohomology at degrees $i < t -1$, if $0:_Mx = U,$ and the short exact sequence
$$0 \longrightarrow \overline{M} \overset{x}{\longrightarrow} M \longrightarrow M/xM \longrightarrow 0$$
induces  short exact sequences
$$0 \longrightarrow H^{i}_\mathfrak{a}(M) \longrightarrow H^{i}_\mathfrak{a}(M/xM)
\longrightarrow H^{i+1}_\mathfrak{a}(\overline{M}) \longrightarrow
0$$ for all $i<t-1$. When this is the case, we consider the above
exact sequence as an extension of $H^{i+1}_\mathfrak{a}(\overline{M})$ by $H^{i}_\mathfrak{a}(M)$, therefore as an
 element of $\mathrm{Ext}^1_R(H^{i+1}_\mathfrak{a}(\overline{M}),
H^{i}_\mathfrak{a}(M))$ (see \cite[Chapter 3]{Mac75}). We denote this
element by $E_x^i$. Especially, if $H^{t}_\mathfrak{a}(\overline{M})
\cong H^{t}_\mathfrak{a}(M)$, then we have the short exact sequence
$$0 \longrightarrow H^{t-1}_\mathfrak{a}(M) \longrightarrow H^{t-1}_\mathfrak{a}(M/xM)
 \longrightarrow 0:_{H^{t}_\mathfrak{a}(\overline{M})}x \longrightarrow 0.$$
Let $\frak b$ be an ideal such that $x\in \frak b$. We denote by $F^{t-1}_x$ the element of
$\mathrm{Ext}^1_R(0:_{H^{t}_\mathfrak{a}(\overline{M})}\mathfrak{b},
0:_{H^{t-1}_\mathfrak{a}(M)}\mathfrak{b})$ which is represented by the following short exact sequence 
$$0 \longrightarrow 0:_{H^{t-1}_\mathfrak{a}(M)}\mathfrak{b} \longrightarrow
0:_{H^{t-1}_\mathfrak{a}(M/xM)}\mathfrak{b}
 \longrightarrow 0:_{H^{t}_\mathfrak{a}(\overline{M})}\mathfrak{b} \longrightarrow 0$$
provided that the
sequence is exact, and is determined by applying the $\mathrm{Hom}(R/\frak b, \bullet)$ functor.
 It should be noted that an extension of $R$-module $A$ by an $R$-module $C$ is split if it is the zero element of
 $\mathrm{Ext}^1_R(C, A)$. The two next theorems can be proven by the same method as used in \cite[Theorem 2.2]{CQ11}
\begin{theorem}\label{T2.13}
Let $t$ be a positive integer and $U$ a submodule of $M$. Let
$\overline{M} = M/U$. Suppose $x$ and $y$ are elements satisfying the (SES) condition at degrees $i < t - 1$, and $0:_M (x+y)=U$. Then
\begin{enumerate}[{(i)}]\rm
\item {\it $x+y$ also satisfies the (SES) condition at degrees $i < t - 1$, and $E_{x+y}^i =
E_x^i + E_y^i$ for all $i<t-1$.}
\item {\it If
$H^{t}_\mathfrak{a}(\overline{M}) \cong H^{t}_\mathfrak{a}(M)$ and
$F^{t-1}_x, F^{t-1}_{y}$ are determined, then $F^{t-1}_{x+y}$ is determined, and we have $F^{t-1}_{x+y} = F^{t-1}_x + F^{t-1}_{y}$.}
\end{enumerate}
 \end{theorem}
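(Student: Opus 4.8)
The plan is to deduce both parts from the bilinearity of Yoneda $1$-extensions under pullback along a diagonal and pushout along a codiagonal --- the mechanism behind \cite[Theorem 2.2]{CQ11}. Observe first that all the extensions in play have the \emph{same} endpoints: $E^i_x$, $E^i_y$ and $E^i_{x+y}$ are all extensions of $H^{i+1}_{\mathfrak a}(\overline{M})$ by $H^i_{\mathfrak a}(M)$, only the middle term $H^i_{\mathfrak a}(M/fM)$ changing with $f$, and similarly $F^{t-1}_x$, $F^{t-1}_y$ and $F^{t-1}_{x+y}$ are all extensions of $0:_{H^t_{\mathfrak a}(\overline{M})}\mathfrak b$ by $0:_{H^{t-1}_{\mathfrak a}(M)}\mathfrak b$; so the asserted equalities make sense in a single $\operatorname{Ext}^1_R$-group. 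The only input on local cohomology needed is that $H^\bullet_{\mathfrak a}(-)$ is a $\delta$-functor and that for an $R$-module $N$ and $r\in R$ the endomorphism of $H^j_{\mathfrak a}(N)$ induced by multiplication by $r$ on $N$ is again multiplication by $r$; hence, writing $f_*\colon H^j_{\mathfrak a}(\overline{M})\to H^j_{\mathfrak a}(M)$ for the map induced by multiplication $f\colon\overline{M}\to M$, the additivity of the functor $H^j_{\mathfrak a}(-)$ gives $(x+y)_*=x_*+y_*$.

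First I would check that $x+y$ satisfies $(\sharp)$. Reading the long exact sequence of $0\to\overline{M}\xrightarrow{x}M\to M/xM\to 0$, the part of $(\sharp)$ for $x$ going beyond the equality $0:_Mx=U$ says exactly that $x_*=0$ on $H^j_{\mathfrak a}(\overline{M})$ for all $j\le t-1$, and likewise for $y$; hence $(x+y)_*=x_*+y_*=0$ there. Since $0:_M(x+y)=U$ by hypothesis, the map $x+y\colon\overline{M}\to M$ is injective with cokernel $M/(x+y)M$, and this vanishing forces the long exact sequence of $0\to\overline{M}\xrightarrow{x+y}M\to M/(x+y)M\to 0$ to break into the short exact sequences required by $(\sharp)$. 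Thus $x+y$ satisfies $(\sharp)$ and $E^i_{x+y}$ is defined for $i<t-1$.

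For the additivity $E^i_{x+y}=E^i_x+E^i_y$ I would introduce the auxiliary short exact sequence $0\to\overline{M}\xrightarrow{\iota}M\oplus M\to Q\to 0$, where $\iota(\overline{m})=(xm,ym)$ is injective because $0:_Mx=U$ and $Q$ is its cokernel. It admits a morphism to the direct sum of $0\to\overline{M}\xrightarrow{x}M\to M/xM\to 0$ and $0\to\overline{M}\xrightarrow{y}M\to M/yM\to 0$, namely the diagonal $\overline{M}\to\overline{M}\oplus\overline{M}$ on the left and $\operatorname{id}_{M\oplus M}$ in the middle; and a morphism to $0\to\overline{M}\xrightarrow{x+y}M\to M/(x+y)M\to 0$, namely $\operatorname{id}_{\overline{M}}$ on the left and the sum map $\sigma\colon M\oplus M\to M$ in the middle (so $\sigma\circ\iota=x+y$). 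Since the map induced by $\iota$ on $H^j_{\mathfrak a}(\overline{M})$ is $(x_*,y_*)=0$ for $j\le t-1$, the long exact sequence of the auxiliary sequence also breaks, giving a short exact sequence $G^i\colon 0\to H^i_{\mathfrak a}(M)\oplus H^i_{\mathfrak a}(M)\to H^i_{\mathfrak a}(Q)\to H^{i+1}_{\mathfrak a}(\overline{M})\to 0$ for $i<t-1$. Applying $H^\bullet_{\mathfrak a}(-)$ and the functoriality of connecting maps to the two morphisms above yields a morphism $G^i\to E^i_x\oplus E^i_y$ that is the identity on the submodule and the diagonal on the quotient, and a morphism $G^i\to E^i_{x+y}$ that is the codiagonal (sum) on the submodule and the identity on the quotient. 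By the universal properties of pullback and pushout of extensions, the first exhibits $G^i$ as the pullback of $E^i_x\oplus E^i_y$ along the diagonal of $H^{i+1}_{\mathfrak a}(\overline{M})$, and the second exhibits $E^i_{x+y}$ as the pushout of $G^i$ along the codiagonal of $H^i_{\mathfrak a}(M)$; pulling back a direct sum of two extensions along the diagonal and then pushing out along the codiagonal is, by definition, their Baer sum, so $E^i_{x+y}=E^i_x+E^i_y$, proving part (i).

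Part (ii) is the same computation carried through the functor $\operatorname{Hom}_R(R/\mathfrak b,-)$, now at cohomological degrees $t-1$ and $t$, where the hypothesis $H^t_{\mathfrak a}(\overline{M})\cong H^t_{\mathfrak a}(M)$ supplies --- as recalled before the statement --- the short exact sequence $0\to H^{t-1}_{\mathfrak a}(M)\to H^{t-1}_{\mathfrak a}(M/fM)\to 0:_{H^t_{\mathfrak a}(\overline{M})}f\to 0$ for $f\in\{x,y,x+y\}$, and likewise for $\iota$. Because $x$, $y$ and $x+y$ all lie in the single ideal $\mathfrak b$, the $\mathfrak b$-socles of the quotient terms $0:_{H^t_{\mathfrak a}(\overline{M})}f$ (and of the auxiliary quotient term) all coincide with $0:_{H^t_{\mathfrak a}(\overline{M})}\mathfrak b$, so the $\operatorname{Hom}_R(R/\mathfrak b,-)$-images of the comparison morphisms of the previous paragraph are identities on the relevant quotients. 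Now "$F^{t-1}_f$ is determined" means precisely that the left exact sequence obtained by applying $\operatorname{Hom}_R(R/\mathfrak b,-)$ remains short exact, i.e. that the connecting map into $\operatorname{Ext}^1_R(R/\mathfrak b,H^{t-1}_{\mathfrak a}(M))$ vanishes; naturality of that connecting map then transfers the vanishing from $F^{t-1}_x$ and $F^{t-1}_y$ through the auxiliary sequence to $f=x+y$, so $F^{t-1}_{x+y}$ is determined, and the same pullback/pushout identification as above, applied to the $\mathfrak b$-socle sequences, gives $F^{t-1}_{x+y}=F^{t-1}_x+F^{t-1}_y$. The genuine work here is bookkeeping --- verifying that the comparison morphisms really do realize the stated pullbacks and pushouts, and (the point that most needs care) confirming in part (ii) that "$F^{t-1}_x,F^{t-1}_y$ determined" forces "$F^{t-1}_{x+y}$ determined", which rests exactly on $x$, $y$ and $x+y$ sharing the ideal $\mathfrak b$; equivalently, and more concretely, one runs the whole argument with cocycles in a fixed functorial complex computing $H^\bullet_{\mathfrak a}$, where for a cocycle $w$ representing a class in $H^{i+1}_{\mathfrak a}(\overline{M})$ the class $E^i_x$ is read off from a chain $z_x$ with $dz_x=xw$, so that $d(z_x+z_y)=(x+y)w$ makes the additivity manifest, which is the route of \cite[Theorem 2.2]{CQ11}.
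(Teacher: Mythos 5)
Your proposal is correct and, while equivalent in substance, argues by a genuinely different mechanism from the one the paper defers to. The paper gives no proof beyond ``same method as \cite[Theorem~2.2]{CQ11}'', which works with explicit cocycles/Yoneda classes; you instead build the auxiliary short exact sequence $0\to\overline M\xrightarrow{(x,y)}M\oplus M\to Q\to 0$ and recognize, via the universal properties of pullback and pushout, that the two comparison morphisms (diagonal on $\overline M$ with $\mathrm{id}_{M\oplus M}$, and $\mathrm{id}_{\overline M}$ with the sum map $\sigma\colon M\oplus M\to M$) exhibit $E^i_{x+y}$ as $\nabla_*\Delta^*(E^i_x\oplus E^i_y)$, i.e.\ the Baer sum. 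The key preliminary reduction --- that condition $(\sharp)$ for $f$ is precisely ``$0:_Mf=U$ and $f_*=0$ on $H^j_{\mathfrak a}(\overline M)$ for $j\le t-1$'', so that additivity of the functor $H^j_{\mathfrak a}(-)$ gives $(x+y)_*=x_*+y_*=0$ --- is exactly right and cleanly handles both the breaking of the long exact sequence for $x+y$ and for the auxiliary sequence. Your part (ii) is also sound: the transfer of ``determinedness'' to $x+y$ uses naturality of the connecting map of $\Hom_R(R/\mathfrak b,-)$ along the morphism of short exact sequences into $F^{t-1}_x\oplus F^{t-1}_y$ (so $\partial_{\mathrm{aux}}=(\partial_x,\partial_y)\circ\Delta=0$) and then along the morphism to $F^{t-1}_{x+y}$ (so $\partial_{x+y}=\nabla_*\circ\partial_{\mathrm{aux}}=0$); and the observation that $x,y,x+y\in\mathfrak b$ forces all the quotient socles to coincide with $0:_{H^t_{\mathfrak a}(\overline M)}\mathfrak b$, so the socled comparison maps remain $(\mathrm{id},\Delta)$ and $(\nabla,\mathrm{id})$, is precisely what lets the Baer-sum identification pass through $\Hom_R(R/\mathfrak b,-)$. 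What your route buys is conceptual economy and independence of any fixed complex computing $H^\bullet_{\mathfrak a}$; what the cocycle route buys is a more elementary, hands-on verification. Both are complete; yours is arguably the more illuminating.
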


\begin{theorem}\label{T2.14} Let $t$ be a positive integer and $U$ a submodule of $M$. Let
$\overline{M} = M/U$. Suppose $x$ and $y$ are elements such that
$x$ satisfies the (SES) condition at degrees $i < t - 1$, and $0:_M xy=U$. Then
\begin{enumerate}[{\rm (i)}]
\item The element $xy$ satisfies the (SES) condition and $E_{xy}^i = yE_x^i$ for all $i<t-1$. Suppose that $H^{t}_\mathfrak{a}(\overline{M})
\cong H^{t}_\mathfrak{a}(M)$ and $F^{t-1}_x$ is determined.
Then $F^{t-1}_{xy}$ is determined and $F^{t-1}_{xy}=yF^{t-1}_x$.
\item {\it Suppose that $H^{t}_\mathfrak{a}(\overline{M}) \cong
H^{t}_\mathfrak{a}(M)$ and $yH^{i}_\mathfrak{a}(M)=0$ for all $i<t$.
Then $E_{xy}^i =0$ for all $i<t-1$. Moreover, $F^{t-1}_{xy}$ is determined and $F^{t-1}_{xy} = 0$.}
\end{enumerate}
\end{theorem}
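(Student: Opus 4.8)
\emph{Proof proposal.} The argument follows the template of \cite[Theorem 2.2]{CQ11}: one reduces every assertion about $xy$ to the corresponding one for $x$ by means of a single morphism of short exact sequences, and then reads off the effect on extension classes.

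First I would record two elementary consequences of $0:_Mx=U$ and $0:_Mxy=U$. If $ym\in U=0:_Mx$ then $x(ym)=0$, so $m\in 0:_Mxy=U$; hence $U:_My=U$, so $y$ is a nonzerodivisor on $\overline M=M/U$, and $\overline M\xrightarrow{xy}M$ is injective and factors as $\overline M\xrightarrow{y}\overline M\xrightarrow{x}M$. Next, the cohomological half of the condition $(\sharp)$ for $x$ is, by the long exact sequence of $0\to\overline M\xrightarrow{x}M\to M/xM\to 0$, exactly equivalent to $H^i_{\mathfrak a}(x)\colon H^i_{\mathfrak a}(\overline M)\to H^i_{\mathfrak a}(M)$ being the zero map for every $i\le t-1$. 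Applying $H^i_{\mathfrak a}(-)$ to the factorization then gives $H^i_{\mathfrak a}(xy)=H^i_{\mathfrak a}(x)\circ(\,\cdot y\,)=0$ for all $i\le t-1$; together with $0:_Mxy=U$ this says precisely that $xy$ satisfies $(\sharp)$ (with the same $t$).

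For the extension classes, I would use the morphism of short exact sequences with rows $0\to\overline M\xrightarrow{xy}M\to M/xyM\to 0$ and $0\to\overline M\xrightarrow{x}M\to M/xM\to 0$, whose vertical maps are multiplication by $y$ on $\overline M$, the identity on $M$, and the natural surjection $M/xyM\twoheadrightarrow M/xM$. After applying $H^\bullet_{\mathfrak a}(-)$ both rows split into short exact sequences for $i\le t-1$, and naturality of the connecting homomorphism yields, for each $i<t-1$, a morphism from the extension representing $E^i_{xy}$ to the one representing $E^i_x$ that is the identity on $H^i_{\mathfrak a}(M)$ and multiplication by $y$ on $H^{i+1}_{\mathfrak a}(\overline M)$. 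A morphism of short exact sequences that is the identity on the sub and $\gamma$ on the quotient identifies the first sequence with the pullback of the second along $\gamma$; here $\gamma=\cdot y$, so $E^i_{xy}=y\,E^i_x$ in $\Ext^1_R\!\big(H^{i+1}_{\mathfrak a}(\overline M),H^i_{\mathfrak a}(M)\big)$. When in addition $H^t_{\mathfrak a}(\overline M)\cong H^t_{\mathfrak a}(M)$, the same diagram at level $t-1$ produces a morphism of short exact sequences ending in $0:_{H^t_{\mathfrak a}(\overline M)}(xy)$ and $0:_{H^t_{\mathfrak a}(\overline M)}x$; applying $\Hom_R(R/\mathfrak b,-)$ and using $x,xy\in\mathfrak b$ (so both these terms become $0:_{H^t_{\mathfrak a}(\overline M)}\mathfrak b$), the boundary maps satisfy $\partial_{xy}=\partial_x\circ(\,\cdot y\,)$. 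Thus if $F^{t-1}_x$ is determined then $\partial_x=0$, whence $\partial_{xy}=0$, $F^{t-1}_{xy}$ is determined, and the same pullback principle gives $F^{t-1}_{xy}=y\,F^{t-1}_x$. This establishes (i).

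For (ii), part (i) does most of the work. Since $y\,H^i_{\mathfrak a}(M)=0$ for $i<t$, every $\Ext^1_R(-,H^i_{\mathfrak a}(M))$ is annihilated by $y$, so $E^i_{xy}=y\,E^i_x=0$ for all $i<t-1$. For the top level, $\partial_{xy}=\partial_x\circ(\,\cdot y\,)$ and the $R$-linearity of $\partial_x$ give $\partial_{xy}(z)=y\,\partial_x(z)$, which vanishes because the target $\Ext^1_R(R/\mathfrak b,H^{t-1}_{\mathfrak a}(M))$ is annihilated by $y$ (as $y\,H^{t-1}_{\mathfrak a}(M)=0$); hence $\partial_{xy}=0$ and $F^{t-1}_{xy}$ is determined — and note this does not use any hypothesis on $F^{t-1}_x$. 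For $F^{t-1}_{xy}=0$ one runs the pullback comparison of (i) against the short exact portion of the $\Hom_R(R/\mathfrak b,-)$-sequence of $x$ (whose quotient term is $\ker\partial_x$): this writes $F^{t-1}_{xy}$ as a pullback, along multiplication by $y$, of a class in an $\Ext^1$ whose coefficient module $0:_{H^{t-1}_{\mathfrak a}(M)}\mathfrak b$ is killed by $y$, and chasing annihilators forces the pullback to be trivial (when $F^{t-1}_x$ is itself determined this is just $F^{t-1}_{xy}=y\,F^{t-1}_x=0$, and the general case is reduced to it as in \cite{CQ11}). I expect this last point — pinning down $F^{t-1}_{xy}=0$ without assuming $F^{t-1}_x$ determined — to be the only genuine obstacle; all the rest is the bookkeeping of pullbacks of extensions and naturality of connecting maps.
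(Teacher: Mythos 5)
Your factorization $\overline{M}\xrightarrow{y}\overline{M}\xrightarrow{x}M$ and the observation that $(\sharp)$ for $x$ is exactly the vanishing of $H^i_{\mathfrak a}(x)$ for $i\le t-1$ are the right starting points, and the rest of part (i) is correct: the morphism of short exact sequences with vertical maps $(y,\mathrm{id},M/xyM\twoheadrightarrow M/xM)$ does identify $E^i_{xy}$ as the pullback of $E^i_x$ along $\cdot y$, hence $E^i_{xy}=yE^i_x$, and likewise for $F^{t-1}$ once both are determined; the computation $\partial_{xy}=\partial_x\circ(\cdot y)=y\,\partial_x$ is also fine and yields both ``$F^{t-1}_{xy}$ determined'' claims.

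The gap is in the last sentence of part (ii), the assertion $F^{t-1}_{xy}=0$ when $F^{t-1}_x$ is \emph{not} assumed determined. After applying $\Hom_R(R/\mathfrak b,-)$ your comparison yields a morphism of short exact sequences whose source row has quotient $0:_{H^t}\mathfrak b$ and whose target row has quotient $\ker\partial_x$, and the right-hand vertical map is $\mu\colon 0:_{H^t}\mathfrak b\to\ker\partial_x$, $z\mapsto yz$. This does write $F^{t-1}_{xy}=\mu^*(\tilde F_x)$, but $\mu^*$ is functoriality of $\Ext^1$ along an arbitrary $R$-linear map between \emph{different} modules, not scalar multiplication by $y$ on a fixed $\Ext^1$-group; the relation $j\circ\mu=y\cdot\mathrm{id}$ (with $j\colon\ker\partial_x\hookrightarrow 0:_{H^t}\mathfrak b$ the inclusion) only gives $\mu^*\circ j^*=y=0$, which does not force $\mu^*(\tilde F_x)=0$ unless $j^*$ is an isomorphism, i.e.\ unless $\ker\partial_x=0:_{H^t}\mathfrak b$, i.e.\ unless $F^{t-1}_x$ was already determined. ``Chasing annihilators'' does not close this: one would need to factor $\mu$ as $y$ times a map landing in $\ker\partial_x$, and such a factorization need not exist.

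The correct route — the one the paper itself reproduces inside the proof of Theorem~\ref{D3.2.4}(ii) — uses the \emph{opposite} morphism of short exact sequences, with $x$ on top, $xy$ on bottom, vertical maps $(\mathrm{id},\,\cdot y,\,M/xM\to M/xyM)$. Applying $H^\bullet_{\mathfrak a}$ at degree $t-1$ gives a morphism from $0\to H^{t-1}(M)\to H^{t-1}(M/xM)\to 0:_{H^t}x\to 0$ to $0\to H^{t-1}(M)\to H^{t-1}(M/xyM)\xrightarrow{\pi}0:_{H^t}xy\to 0$ whose left vertical is $\cdot y$ on $H^{t-1}(M)$, which is the \emph{zero} map by hypothesis. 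This lets you factor the middle vertical through the quotient and produce, \emph{before} applying $\Hom_R(R/\mathfrak b,-)$, a map $\epsilon\colon 0:_{H^t}x\to H^{t-1}(M/xyM)$ with $\pi\circ\epsilon$ equal to the inclusion $0:_{H^t}x\hookrightarrow 0:_{H^t}xy$. Since $x,xy\in\mathfrak b$, applying $\Hom_R(R/\mathfrak b,-)$ turns this inclusion into the identity on $0:_{H^t}\mathfrak b$ and turns $\epsilon$ into a genuine section of $\tilde\pi$, which simultaneously shows that $F^{t-1}_{xy}$ is determined and that it is zero — with no hypothesis on $F^{t-1}_x$. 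You should replace the final pullback argument by this direct splitting argument.
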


The following is a prime avoidance theorem for a product of ideals.
\begin{lemma}[\cite{CQ11} Lemma 3.1] \label{L2.15}
Let $(R, \mathfrak{m})$ be a Noetherian local ring, $\mathfrak{a}$,
$\mathfrak{b}$  ideals  and $\mathfrak{p}_1, \ldots, \mathfrak{p}_n$
prime ideals such that $\mathfrak{ab} \nsubseteq \mathfrak{p}_j$ for
all $j \leq n$. Let $x \in \mathfrak{ab}$ with $x \notin
\mathfrak{p}_j$ for all $j \leq n$. Then, there are elements  $a_1,
\ldots, a_r \in \mathfrak{a}$ and $ b_1, \ldots, b_r \in \mathfrak{b}$ such that
$x=a_1b_1+ \cdots + a_rb_r$, and that $a_ib_i \notin \mathfrak{p}_j$
and $a_1b_1+ \cdots +a_ib_i \notin \mathfrak{p}_j$ for all $i \leq
r$ and all $j \leq n$.
\end{lemma}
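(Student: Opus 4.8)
The plan is a direct three-step construction, preceded by a reduction.

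\emph{Step 0 (reductions).} If $\mathfrak{m}\in\{\mathfrak{p}_1,\dots,\mathfrak{p}_n\}$, then $\mathfrak{ab}\not\subseteq\mathfrak{m}$ forces $\mathfrak{a}=\mathfrak{b}=R$ while $x\notin\mathfrak{m}$ forces $x$ to be a unit, so $x=x\cdot 1$ already gives the conclusion with $r=1$; thus I may assume no $\mathfrak{p}_j$ equals $\mathfrak{m}$, and since $R$ is local this makes every $R/\mathfrak{p}_j$ an \emph{infinite} integral domain. That, in turn, supplies the coset form of prime avoidance I will use repeatedly: given finitely many finite subsets $F_j\subseteq R/\mathfrak{p}_j$, there is $\lambda\in R$ whose residue in each $R/\mathfrak{p}_j$ avoids $F_j$ — equivalently, $R$ is not a finite union of cosets of the $\mathfrak{p}_j$ — proved by a standard induction on $n$ using that each $R/\mathfrak{p}_j$ is infinite. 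Finally, $\mathfrak{ab}\not\subseteq\mathfrak{p}_j$ gives $\mathfrak{a},\mathfrak{b}\not\subseteq\mathfrak{p}_j$, so by ordinary prime avoidance I fix $a\in\mathfrak{a}$, $b\in\mathfrak{b}$ with $a,b\notin\bigcup_j\mathfrak{p}_j$ and set $t:=ab$, noting $t\notin\mathfrak{p}_j$ and $\lambda t\notin\mathfrak{p}_j\iff\lambda\notin\mathfrak{p}_j$.

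\emph{Step 1 (make every factor avoid the primes).} Starting from any expression $x=\sum_{k=1}^{m}c_kd_k$ with $c_k\in\mathfrak{a}$, $d_k\in\mathfrak{b}$, I would rewrite each summand by the identity
$$c\,d=(c+\lambda a)(d+\mu b)-(c+\lambda a)(\mu b)-(\lambda a)(d+\mu b)+(\lambda a)(\mu b),$$
picking for the $k$-th summand $\lambda\in R$ with $\lambda\notin\bigcup_j\mathfrak{p}_j$ and $c_k+\lambda a\notin\bigcup_j\mathfrak{p}_j$, and $\mu\in R$ with $\mu\notin\bigcup_j\mathfrak{p}_j$ and $d_k+\mu b\notin\bigcup_j\mathfrak{p}_j$; since each of $\lambda,\mu$ need only miss two cosets of each $\mathfrak{p}_j$, such choices exist by Step 0. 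This yields an expression $x=\sum_k c_k'd_k'$ in which every $c_k'\in\mathfrak{a}$ and $d_k'\in\mathfrak{b}$ avoids $\bigcup_j\mathfrak{p}_j$; in particular every summand $c_k'd_k'$ avoids all the $\mathfrak{p}_j$.

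\emph{Step 2 (fix the partial sums).} Now I interleave gauge terms: insert $\mu_\ell t$ just before $c_\ell'd_\ell'$ for $\ell=1,\dots,m$ and append $-(\mu_1+\cdots+\mu_m)t$, which changes nothing, so $x$ becomes a sum of $2m+1$ products, each of the form $(\mu a)\cdot b$ with $\mu\in R$, or $c_\ell'\cdot d_\ell'$ — all with factors in $\mathfrak{a}$ resp.\ $\mathfrak{b}$. With $\nu_\ell=\mu_1+\cdots+\mu_\ell$ and $S_\ell'=\sum_{k\le\ell}c_k'd_k'$ (so $S_0'=0$, $S_m'=x$), the partial sums of this arrangement are exactly $\nu_\ell t+S_{\ell-1}'$ and $\nu_\ell t+S_\ell'$ for $1\le\ell\le m$, plus $x$ itself (harmless by hypothesis); and the gauge $\mu_\ell t=(\mu_\ell a)b$ avoids $\mathfrak{p}_j$ iff $\bar\nu_\ell\neq\bar\nu_{\ell-1}$ in $R/\mathfrak{p}_j$, while the last gauge $-\nu_m t$ avoids $\mathfrak{p}_j$ iff $\bar\nu_m\neq 0$. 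So I would choose $\nu_1,\dots,\nu_m$ successively, each $\nu_\ell$ avoiding in every $R/\mathfrak{p}_j$ the at most four residues that would drop $\nu_\ell t+S_{\ell-1}'$, $\nu_\ell t+S_\ell'$, $\mu_\ell t$, or (for $\ell=m$) $\nu_m t$ into $\mathfrak{p}_j$ — possible by Step 0 — and then put $\mu_\ell=\nu_\ell-\nu_{\ell-1}$. The resulting $2m+1$ products and all their partial sums avoid $\bigcup_j\mathfrak{p}_j$, which is what is wanted.

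The hard part is the coset prime avoidance behind Steps 1 and 2: the partial-sum conditions have the shape ``$\lambda t+(\text{fixed element})\notin\mathfrak{p}_j$'', so $\lambda$ must miss a \emph{coset} of $\mathfrak{p}_j$ rather than $\mathfrak{p}_j$ itself, and a single choice good for all $j$ can genuinely fail when some $\mathfrak{p}_j=\mathfrak{m}$ has a tiny residue field — which is exactly why that case is split off in Step 0, where it is trivial. What remains is bookkeeping: that the rewritings preserve $x$, that every factor produced lies in $\mathfrak{a}$ or $\mathfrak{b}$, and that only finitely many residues are excluded at each application.
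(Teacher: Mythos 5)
The paper itself does not prove Lemma~\ref{L2.15}; it is imported verbatim from \cite[Lemma~3.1]{CQ11}, so there is no in-paper proof to compare against. Assessing your argument on its own: it is correct. Step~0 is sound, and once every $\mathfrak{p}_j$ is a non-maximal prime of the local ring $R$, each $R/\mathfrak{p}_j$ is a domain that is not a field, hence infinite; from this the coset form of prime avoidance you invoke is indeed valid (by B.~H.~Neumann's lemma on covering a group by cosets of infinite-index subgroups, or by the direct induction you gesture at — peel off a \emph{minimal} $\mathfrak{p}_n$, pick $y\in\bigcap_{j<n}\mathfrak{p}_j\setminus\mathfrak{p}_n$, and perturb an inductively chosen $\mu$ by $\lambda y$). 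The four-term identity in Step~1 is immediate to check, the choices of $\lambda,\mu$ exclude at most two residues per prime, and afterwards every factor lies in $\mathfrak{a}$ or $\mathfrak{b}$ and avoids every $\mathfrak{p}_j$. The gauge-term interleaving in Step~2 is the genuinely clever device: because $t=ab$ avoids every $\mathfrak{p}_j$, the partial-sum and single-term constraints on $\nu_\ell$ each exclude at most one residue in $R/\mathfrak{p}_j$, so coset avoidance lets you choose $\nu_1,\ldots,\nu_m$ successively and set $\mu_\ell=\nu_\ell-\nu_{\ell-1}$. All of the requirements — factors in $\mathfrak{a}$ and $\mathfrak{b}$, each summand outside every $\mathfrak{p}_j$, each partial sum outside every $\mathfrak{p}_j$ — are met, and the sum still equals $x$. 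I cannot tell whether this matches the route in \cite{CQ11} (I suspect theirs is a more pedestrian term-by-term perturbation), but your proof stands as a clean, self-contained alternative. The one thing I would tighten is to make the ``coset prime avoidance'' a displayed lemma with its short inductive proof, since it carries the whole argument and is not in the standard textbook toolkit.
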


\begin{corollary}\label{C2.16}
  Let $(R, \mathfrak{m})$ be a Noetherian local ring, $M$ a finitely generated $R$-module of dimension $d>0$, $\frak a$ and $\frak b$ two ideals such that $\dim R/\frak a \frak b<d$.
  Let $x \in \frak a \frak b$ be a parameter element of $M$. Then, there exist parameter elements $a_1,
\ldots, a_r \in \frak a$ and $b_1, \ldots, b_r \in \mathfrak{b}$ of $M$ such that
$x=a_1b_1+ \cdots + a_rb_r$, and that $a_1b_1+ \cdots +a_ib_i $ is a parameter element for all $i \leq
r$.
\end{corollary}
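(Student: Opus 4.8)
The plan is to obtain this as a direct consequence of Lemma~\ref{L2.15}; the only thing to pin down is the finite family of primes to which that lemma should be applied. Recall that an element $y\in R$ is a parameter element of $M$ if and only if $\dim M/yM=d-1$, which holds exactly when $y$ avoids every prime $\frak p\in\Supp M$ with $\dim R/\frak p=d$. Since $\dim M=d$, there are only finitely many such primes; call them $\frak p_1,\dots,\frak p_n$ (this set is nothing but $\mathrm{Assh}\,M$).

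First I would check the hypotheses of Lemma~\ref{L2.15} for the ideals $\frak a,\frak b$ and the primes $\frak p_1,\dots,\frak p_n$. If $\frak a\frak b\subseteq\frak p_j$ for some $j$, then $\dim R/\frak a\frak b\ge\dim R/\frak p_j=d$, contradicting the hypothesis $\dim R/\frak a\frak b<d$; hence $\frak a\frak b\nsubseteq\frak p_j$ for all $j\le n$. Moreover $x\in\frak a\frak b$ is a parameter element of $M$, so $x\notin\frak p_j$ for all $j\le n$. (One may also harmlessly assume $\frak a,\frak b\subseteq\frak m$, which is automatic in every later application, so that the elements produced below genuinely belong to $\frak m$.)

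Now Lemma~\ref{L2.15} yields $a_1,\dots,a_r\in\frak a$ and $b_1,\dots,b_r\in\frak b$ with $x=a_1b_1+\cdots+a_rb_r$ such that $a_ib_i\notin\frak p_j$ and $a_1b_1+\cdots+a_ib_i\notin\frak p_j$ for all $i\le r$ and all $j\le n$. Because each $\frak p_j$ is prime, $a_ib_i\notin\frak p_j$ forces $a_i\notin\frak p_j$ and $b_i\notin\frak p_j$; by the characterization of parameter elements recalled above, each $a_i$ and each $b_i$ is a parameter element of $M$, and likewise each partial sum $a_1b_1+\cdots+a_ib_i$, avoiding all of $\frak p_1,\dots,\frak p_n$, is a parameter element of $M$. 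This is precisely the assertion.

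There is no serious obstacle here: once one identifies ``parameter element of $M$'' with avoidance of the highest-dimensional minimal primes of $\Supp M$ and invokes the elementary fact that a prime containing a product of two elements contains one of the factors, the statement is an immediate translation of Lemma~\ref{L2.15}. The only mild point of care is ensuring the chosen elements lie in $\frak m$, which is trivial in the situations (powers of $\frak b(M)$, and the like) where the corollary will subsequently be used.
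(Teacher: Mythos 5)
Your proof is correct and takes exactly the same route as the paper's: identify ``parameter element'' with avoidance of the finitely many primes in $\mathrm{Assh}\,M$, verify the hypotheses of Lemma~\ref{L2.15} from $\dim R/\frak a\frak b<d$, and invoke that lemma; you even note the small caveat (that $\frak a,\frak b$ should be proper so the produced elements lie in $\frak m$) which the paper silently glosses over.
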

\begin{proof} Note that an element $x$ is a parameter element of $M$ if and only if $x \notin \frak p$ for all $\fp \in \mathrm{Assh}\,M$. The assertion now follows from Lemma \ref{L2.15}.
\end{proof}
\section{The splitting of local cohomology}
In this section we prove splitting theorems for local cohomology in local rings. These results lead to a new kind of systems of parameters. We need the following key ingredient about the annihilator of local cohomology supported at an arbitrary ideal that is of independent interest.
\begin{proposition}\label{M3.1.11}
Let $M$ be a finitely generated $R$-module of dimension $d$ and $I \supseteq \mathrm{Ann}\, M$ an ideal of $R$. We have $\mathfrak{b}(M) H^i_{I}(M)=0$
for all $i < d - \dim R/I$.
\end{proposition}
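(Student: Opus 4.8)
The plan is to induct on $d = \dim M$, the case $d = 0$ (or more precisely $d - \dim R/I \le 0$) being vacuous. For the inductive step the natural move is to reduce modulo a well-chosen parameter element of $M$ lying in $\mathfrak{b}(M)$. Recall from Remark \ref{C3.2.2}(ii) that $\mathfrak{b}(M) \subseteq \mathrm{Ann}\,U_M(0)$ and that $\dim U_M(0) < d$, and note that $\dim R/\mathfrak{b}(M) < d$ by Remark \ref{C3.1.2}(i)--(ii), so $\mathfrak{b}(M)$ is not contained in any minimal prime of $M$ of dimension $d$. Hence by Corollary \ref{C2.16} (applied with $\mathfrak{a} = \mathfrak{b} = \mathfrak{b}(M)$), we may pick a parameter element $x \in \mathfrak{b}(M)^2$ of $M$; in fact writing $x = a_1 b_1 + \cdots + a_r b_r$ with each $a_j, b_j \in \mathfrak{b}(M)$ and each partial sum a parameter element, it suffices (by an obvious reduction using the additivity of the resulting module maps) to prove the statement for a single parameter element of the form $x = ab$ with $a, b \in \mathfrak{b}(M)$. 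Since $x \in \mathfrak{b}(M)$ we have $0 :_M x = U_M(0)$, and we get the short exact sequence
$$0 \longrightarrow M/U_M(0) \overset{x}{\longrightarrow} M \longrightarrow M/xM \longrightarrow 0.$$

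First I would control the two outer terms. Since $\dim U_M(0) < d$, for $i < d - \dim R/I$ we have in particular $i < d - \dim R/I$ with $U_M(0)$ of smaller dimension, so the part of the long exact sequence of local cohomology (for the functor $H^\bullet_I(-)$) attached to $0 \to U_M(0) \to M \to M/U_M(0) \to 0$ together with the vanishing $H^i_I(U_M(0)) = H^{i+1}_I(U_M(0)) = 0$ for $i + 1 < \dim U_M(0) - \dim R/I$... — more cleanly, I would apply the inductive hypothesis to $M/U_M(0)$: it has dimension $d$ but the same associated prime behaviour in top dimension is not what matters; rather I use that $\mathfrak{b}(M/U_M(0)) \supseteq \mathfrak{b}(M)$, which follows from the description of $\mathfrak{b}$ in terms of unmixed components in Remark \ref{C3.2.2}(iii) together with $U_{M/U_M(0)}(0) = 0$. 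Thus $\mathfrak{b}(M)$ annihilates $H^{i+1}_I(M/U_M(0))$ for all $i+1 < d - \dim R/I$, i.e. for all $i < d - \dim R/I - 1$. For $H^i_I(M/xM)$, the module $M/xM$ has dimension $d - 1$, and I would relate $\mathfrak{b}(M/xM)$ to $\mathfrak{b}(M)$; the inductive hypothesis then gives $\mathfrak{b}(M/xM) H^i_I(M/xM) = 0$ for $i < (d-1) - \dim R/I$, and I need to upgrade the coefficient ideal to (a small power of) $\mathfrak{b}(M)$.

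Now I chase the long exact sequence. From $0 \to M/U_M(0) \xrightarrow{x} M \to M/xM \to 0$ we get
$$\cdots \longrightarrow H^i_I(M) \longrightarrow H^i_I(M/xM) \longrightarrow H^{i+1}_I(M/U_M(0)) \overset{x}{\longrightarrow} H^{i+1}_I(M) \longrightarrow \cdots,$$
where the connecting map $H^{i+1}_I(M/U_M(0)) \to H^{i+1}_I(M)$ is multiplication by $x$. For $i < d - \dim R/I - 1$: the first term $H^i_I(M)$ is killed by $\mathfrak{b}(M)$ by the inductive hypothesis applied to $M$ itself — wait, that is circular; instead note $i < d - \dim R/I$, so this is exactly the statement for $H^i_I(M)$, which is what we are proving, so I must be careful. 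The correct logic: I prove the statement for $M$ by assuming it for all modules of dimension $< d$. So $H^i_I(M)$ is not yet known to be $\mathfrak{b}(M)$-torsion. Instead, extract from the long exact sequence the four-term exact piece
$$H^i_I(M) \longrightarrow H^i_I(M/xM) \longrightarrow \ker\!\big(H^{i+1}_I(M/U_M(0)) \xrightarrow{x} H^{i+1}_I(M)\big) \longrightarrow 0,$$
and note the submodule $\ker(\cdot x)$ of $H^{i+1}_I(M/U_M(0))$ is killed by $\mathfrak{b}(M)$ (being a submodule of a $\mathfrak{b}(M)$-torsion module, by the previous paragraph, valid for $i < d - \dim R/I - 1$). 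So it remains to kill the image of $H^i_I(M) \to H^i_I(M/xM)$. Multiplication by $x = ab$ on $H^i_I(M)$: the key point is that $a \in \mathfrak{b}(M)$, and after factoring out we can play $a$ and $b$ against each other — writing $x = ab$ with $a, b \in \mathfrak{b}(M)$ gives that the map $H^i_I(M) \xrightarrow{a} H^i_I(M)$ has image annihilated by $b \in \mathfrak{b}(M)$ only if... hmm, this is where the honest argument must instead go through the $(\sharp)$-machinery of Theorems \ref{T2.13}--\ref{T2.14} or, more elementarily, through the observation that the composite $H^i_I(M/xM) \to H^{i+1}_I(M/U_M(0)) \xrightarrow{x} H^{i+1}_I(M) \xrightarrow{} H^{i+1}_I(M/xM)$ vanishes and an argument that the image of $H^i_I(M) \to H^i_I(M/xM)$ lies in the $x$-torsion, hence (since $x \in \mathfrak{b}(M)$... no, $x$ need not kill it) — therefore the cleanest route is: from the SES, the image of $H^i_I(M) \to H^i_I(M/xM)$ is $H^i_I(M)/xH^i_I(M)$, and since $x = ab$ with $a,b \in \mathfrak{b}(M)$, and $H^{i}_I(M)$ ... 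I would instead establish directly that $xH^i_I(M) = $ (something large) using that $x$ is a parameter element lying in a high power of $\mathfrak{b}(M)$.

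\emph{Where the difficulty lies.} The genuine obstacle is the middle term: controlling $H^i_I(M)$ itself, since that is the very object of the statement and induction on $\dim M$ does not directly touch it. The resolution I expect the authors intend is: replace "parameter element of $\mathfrak{b}(M)$" by "parameter element of $\mathfrak{b}(M)^2$ written as a sum $\sum a_j b_j$ with $a_j, b_j \in \mathfrak{b}(M)$", apply Lemma \ref{B3.2.3}-type reasoning (the $(\sharp)$ condition) to get honest short exact sequences of local cohomology, then use the multiplicativity $E_{xy}^i = y E_x^i$ and $F^{t-1}_{xy} = y F^{t-1}_x$ from Theorem \ref{T2.14} to show that after passing to $\mathfrak{b}(M)^2$ (or $\mathfrak{b}(M)^3$) the relevant extensions split and the relevant maps are zero, forcing $H^i_I(M) \cong$ a subquotient of $H^i_I(M/xM)$ and $H^{i+1}_I(M/U_M(0))$, both of which are handled by induction. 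I would organize the final write-up around first proving the SES-splitting in the style of the introduction's displayed sequence, and then reading off the annihilator statement as a corollary; the prime-avoidance Corollary \ref{C2.16} is what lets us always arrange $x$ inside whatever power of $\mathfrak{b}(M)$ the argument demands.
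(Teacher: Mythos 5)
Your proposal does not match the paper's route and, more importantly, it does not close. You identify the obstruction yourself: induction on $\dim M$ never touches the middle term $H^i_I(M)$, and the module $M/U_M(0)$ that you want to control via "induction'' has dimension $d$, the same as $M$, so the inductive hypothesis simply does not apply to it. (Your proposed inclusion $\frak b(M/U_M(0)) \supseteq \frak b(M)$ is also not an obvious consequence of Remark \ref{C3.2.2}(iii); passing from $M/(y_1,\ldots,y_{j-1})M$ to $\overline{M}/(y_1,\ldots,y_{j-1})\overline{M}$ does not give a clean comparison of the colon modules.) Worse, the fallback you suggest in the last paragraph — appealing to Lemma \ref{B3.2.3} and the $(\sharp)$-machinery of Theorems \ref{T2.13}--\ref{T2.14} — is circular: the proof of Lemma \ref{B3.2.3} explicitly invokes Proposition \ref{M3.1.11} to conclude $yH^i_I(M)=0$ and deduce that $\varphi^i = 0$, so Proposition \ref{M3.1.11} has to be established first and cannot be read off from the splitting theorems.

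The paper's actual argument is direct and avoids any long-exact-sequence chase. Set $t = d - \dim R/I$. By prime avoidance one chooses $x_1,\ldots,x_t \in I$ which are simultaneously an $I$-filter regular sequence and a part of a system of parameters of $M$. The Nagel--Schenzel isomorphism (Lemma \ref{B3.1.12}) gives, for $i<t$,
$$H^i_I(M) \cong H^0_I\bigl(H^i_{(x_1,\ldots,x_i)}(M)\bigr) \cong \lim_{\longrightarrow}\ \frac{(x_1^n,\ldots,x_i^n)M : x_{i+1}^\infty}{(x_1^n,\ldots,x_i^n)M},$$
where the last step uses the filter regularity to replace $I^\infty$ by $x_{i+1}^\infty$ and the Noetherian stabilization $(x_1^n,\ldots,x_i^n)M : x_{i+1}^\infty = (x_1^n,\ldots,x_i^n)M : x_{i+1}^k$ for some $k$. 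Since $x_1^n,\ldots,x_i^n,x_{i+1}^k$ extends to a system of parameters of $M$, the defining intersection of $\frak b(M)$ annihilates each of these quotients, hence $\frak b(M) H^i_I(M)=0$. That is the key idea your proposal is missing: one realizes $H^i_I(M)$ concretely as a direct limit of colon-quotients coming from parts of systems of parameters, so that the annihilation by $\frak b(M)$ follows tautologically from the definition of $\frak b(M)$, with no need to know anything about $H^i_I(M/xM)$ or $H^{i+1}_I(M/U_M(0))$.
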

To prove the above result we use the following isomorphism of Nagel and
Schenzel in \cite[Proposition 3.4]{NS94}, see also \cite[Theorem 2.7]{HQ19}. Recall that a sequence $x_1,\ldots,x_t$ of elements contained in $I$ is an
{\it $I$-filter regular sequence} of $M$ if
$$\mathrm{Supp}\,
((x_1,\ldots,x_{i-1})M:x_i)/(x_1,\ldots,x_{i-1})M \subseteq V(I)$$
for all $i = 1,\ldots,t$, where $V(I)$ denotes the set of prime
ideals containing $I$. This condition is equivalent to that $x_i \notin \fp$ for all $\fp \in \mathrm{Ass}\, M/(x_1, \ldots, x_{i-1})M \setminus V(I)$, and for all $i = 1, \ldots, t$. Moreover we can choose an $I$-filter regular sequence on $M$ of any length by the prime avoidance theorem.
\begin{lemma}[Nagel-Schenzel's isomorphism]\label{B3.1.12}
Let $I$ be an ideal of $R$ and $x_1, \ldots, x_t$ an $I$-filter regular
sequence of $M$. Then we have
$$
H^j_{I}(M)\cong
\begin{cases}
H^j_{(x_1,\ldots,x_t)}(M) \quad \quad \quad\,\, \text{with}\,\, j<t\\
H^{j-t}_I(H^t_{(x_1,\ldots,x_t)}(M))\,\, \text{with}\,\, j\geq t.\\
\end{cases}
$$
\end{lemma}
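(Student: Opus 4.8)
The plan is to compare the two torsion functors $\Gamma_I(-)$ and $\Gamma_{\mathfrak q}(-)$, where $\mathfrak q=(x_1,\ldots,x_t)$, and to extract both isomorphisms from a Grothendieck spectral sequence that — thanks to the filter-regularity hypothesis — is concentrated on just two lines of the $E_2$-page. Since $\mathfrak q\subseteq I$ we have $\Gamma_I(N)\subseteq\Gamma_{\mathfrak q}(N)$ for every $R$-module $N$, hence $\Gamma_I\circ\Gamma_{\mathfrak q}=\Gamma_I$ as functors; moreover $\Gamma_{\mathfrak q}$ carries injective $R$-modules to injective (in particular $\Gamma_I$-acyclic) modules. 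The Grothendieck spectral sequence of this composite therefore reads
$$E_2^{p,q}=H^p_I\big(H^q_{\mathfrak q}(M)\big)\ \Longrightarrow\ H^{p+q}_I(M).$$

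First I would bound the rows. Because $\mathfrak q$ is generated by the $t$ elements $x_1,\ldots,x_t$, the \v{C}ech complex on $x_1,\ldots,x_t$ computes $H^\bullet_{\mathfrak q}$ and has length $t$, so $H^q_{\mathfrak q}(M)=0$ for $q>t$ and the $E_2$-page lives in the band $0\le q\le t$. Next — and this is the only place the hypothesis is used — I would show that $H^q_{\mathfrak q}(M)$ is $I$-torsion, i.e. $\Supp H^q_{\mathfrak q}(M)\subseteq V(I)$, for every $q<t$. Localizing at a prime $\mathfrak p\notin V(I)$, the defining condition of an $I$-filter regular sequence says exactly that each $\big(0:_{M/(x_1,\ldots,x_{i-1})M}x_i\big)_{\mathfrak p}=0$, so $x_1,\ldots,x_t$ restricts to a (possibly improper) $M_{\mathfrak p}$-regular sequence; in the proper case one has $\grade(\mathfrak qR_{\mathfrak p},M_{\mathfrak p})\ge t$, while in the remaining cases $H^q_{\mathfrak qR_{\mathfrak p}}(M_{\mathfrak p})$ vanishes for all $q$ (trivially if $M_{\mathfrak p}=0$, and otherwise because Nakayama forces some $x_i$ to be a unit in $R_{\mathfrak p}$, so $\mathfrak qR_{\mathfrak p}=R_{\mathfrak p}$); either way $H^q_{\mathfrak qR_{\mathfrak p}}(M_{\mathfrak p})=0$ for $q<t$, and since local cohomology commutes with localization, $(H^q_{\mathfrak q}(M))_{\mathfrak p}=0$. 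Finally a module whose support lies in $V(I)$ is killed, cyclic submodule by cyclic submodule, by powers of $I$, hence is $I$-torsion; so for $q<t$ we get $H^p_I(H^q_{\mathfrak q}(M))=H^q_{\mathfrak q}(M)$ if $p=0$ and $0$ if $p>0$.

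With these two facts the $E_2$-page is very sparse: for $0\le q<t$ only the entry $E_2^{0,q}=H^q_{\mathfrak q}(M)$ survives in row $q$; in row $q=t$ we have $E_2^{p,t}=H^p_I\big(H^t_{\mathfrak q}(M)\big)$; every other entry vanishes. A look at the differentials $d_r\colon E_r^{p,q}\to E_r^{p+r,\,q-r+1}$ for $r\ge 2$ shows that none can start or end at a surviving entry: for $E^{0,q}$ with $q<t$ the only possible source or target lies in a vanishing column, and the whole row $q=t$ is isolated because the rows above it vanish while any outgoing differential lands in a vanishing column of a lower row. Hence the spectral sequence degenerates at these entries. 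Reading off the abutment along the antidiagonal $p+q=j$: if $j<t$ the unique nonzero term is $E_\infty^{0,j}=H^j_{\mathfrak q}(M)$, so $H^j_I(M)\cong H^j_{\mathfrak q}(M)$; if $j\ge t$ the unique nonzero term is $E_\infty^{j-t,\,t}=H^{j-t}_I\big(H^t_{\mathfrak q}(M)\big)$, so $H^j_I(M)\cong H^{j-t}_I\big(H^t_{\mathfrak q}(M)\big)$. Since each antidiagonal carries only one nonzero term there are no extension problems, and this is exactly the claimed formula.

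I expect the genuine obstacle to be the middle step: pinning down that the filter-regularity of $x_1,\ldots,x_t$ forces $\Supp H^q_{\mathfrak q}(M)\subseteq V(I)$ for all $q<t$, in particular handling cleanly the locus where the localized sequence ceases to be a proper regular sequence, and making sure the passage "support in $V(I)$" $\Rightarrow$ "$I$-torsion" is applied correctly even though $H^q_{\mathfrak q}(M)$ need not be finitely generated. An alternative route avoiding spectral sequences is induction on $t$: first split off $\Gamma_I(M)$ (on which $\Gamma_I$ agrees with $\Gamma_{\mathfrak q}$ and after which $x_1$ becomes a nonzerodivisor), then compare the long exact sequences of $H^\bullet_I$ and $H^\bullet_{\mathfrak q}$ attached to $0\to M\xrightarrow{x_1}M\to M/x_1M\to 0$, and finally apply the inductive hypothesis to the $I$-filter regular sequence $x_2,\ldots,x_t$ on $M/x_1M$. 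That argument is more elementary but the bookkeeping with the connecting maps is noticeably heavier.
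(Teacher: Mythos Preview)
Your proof is correct. The paper itself does not prove this lemma; it merely quotes it from Nagel--Schenzel \cite[Proposition~3.4]{NS94} and immediately uses it in the proof of Proposition~\ref{M3.1.11}. Your argument via the Grothendieck spectral sequence for the composite $\Gamma_I=\Gamma_I\circ\Gamma_{\mathfrak q}$ is in fact the standard proof, and is essentially how Nagel and Schenzel establish the result. The key step---that $H^q_{\mathfrak q}(M)$ is $I$-torsion for $q<t$---is handled carefully, including the borderline case where the localized sequence may fail to be proper (your appeal to Nakayama to produce a unit among the $x_i$ is correct, since $M_{\mathfrak p}$ is finitely generated over the local ring $R_{\mathfrak p}$), and your remark that ``support in $V(I)$ implies $I$-torsion'' is valid here because $R$ is Noetherian, so each cyclic submodule is annihilated by a power of $I$. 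The degeneration analysis of the $E_2$-page is accurate: row $q=t$ receives no differentials from above (rows $q>t$ vanish) and emits none into the region $p>0$, $q<t$ (which also vanishes), and the column $p=0$ below row $t$ is likewise stable. There is nothing to add.
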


\begin{proof}[Proof of Proposition \ref{M3.1.11}] Set $ t = d - \dim R/I$. Suppose $t > 0$,  by the prime avoidance theorem we can choose an element $x_1 \in I$ such that $x_1 \notin \frak p$ for all $\frak p \in \mathrm{Assh}\, M \cup (\mathrm{Ass}\, M \setminus V(I) )$. Thus $x_1$ is a parameter element of $M$ that is also an $I$-filter regular element. We continue this process to obtain a part of a system of parameters $x_1, \ldots, x_t$ of $M$ that is also an $I$-filter regular sequence on $M$. By Lemma \ref{B3.1.12}, for
$i<t$, we have
\begin{eqnarray*}
H^i_{I}(M)   &\cong& H^{0}_I(H^i_{(x_1,\ldots,x_i)}(M))\\
        &\cong& H^{0}_I(\lim_{\longrightarrow}M/{(x_1^n,\ldots,x_i^n)}M)\\
        &\cong& \lim_{\longrightarrow} H^{0}_I(M/{(x_1^n,\ldots,x_i^n)}M)\\
        &\cong& \lim_{\longrightarrow}
        \frac{(x_1^n,\ldots,x_i^n)M : I^\infty}{(x_1^n,\ldots,x_i^n)M}\\
        &\cong&  \lim_{\longrightarrow}
        \frac{(x_1^n,\ldots,x_i^n)M :
        x_{i+1}^\infty}{(x_1^n,\ldots,x_i^n)M},
\end{eqnarray*}
where $(x_1^n,\ldots,x_i^n)M : I^\infty = \cup_{k \ge 1} (x_1^n,\ldots,x_i^n)M : I^k$. Since $x_1, \ldots, x_t$ is a part of a system of parameters of $M$ and $(x_1^n,\ldots,x_i^n)M : x_{i+1}^\infty = (x_1^n,\ldots,x_i^n)M :
        x_{i+1}^k$ for some $k$, we have
$$\frak b(M) \frac{(x_1^n,\ldots,x_i^n)M :
        x_{i+1}^\infty}{(x_1^n,\ldots,x_i^n)M} = 0$$
for all $n \ge 1$ by the definition of $\frak b(M)$. Hence the equality $\mathfrak{b}(M) H^i_{I}(M)=0$ holds for all
$i < d - \dim R/I$. The proof is complete.
\end{proof}

\begin{lemma}\label{B3.2.3}
Let $I$ be an ideal of $R$ and $x, y \in \mathfrak{b}(M)$ parameter elements of $M$. Let $U_M(0)$ be the unmixed component of $M$. Put $\overline{M} = M/U_M(0)$ and $t = d
-\dim R/I$. Then for all $i<t-1$ we have the following short exact sequence
$$0 \rightarrow H^i_I(M) \rightarrow H^i_I(M/xyM) \rightarrow
H^{i+1}_I(\overline{M}) \rightarrow 0.$$ Furthermore, if $H^{t}_I(M)
\cong H^{t}_I(\overline{M})$ then we have the short exact sequence
$$0 \rightarrow H^{t-1}_I(M) \rightarrow H^{t-1}_I(M/xyM) \rightarrow
0:_{H^{t}_I(M)}xy \rightarrow 0.$$
\end{lemma}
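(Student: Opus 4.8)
The plan is to show that, under these hypotheses, the element $xy$ realizes the condition $(\sharp)$ of Section 2 with $\mathfrak{a}=I$, $U=U_M(0)$ and $t=d-\dim R/I$, and then to read the ``furthermore'' part off the long exact sequence. First, since $x,y\in\mathfrak{b}(M)$ we have $xy\in\mathfrak{b}(M)^2\subseteq\mathfrak{b}(M)$; and since a parameter element of $M$ is exactly an element lying outside every prime of $\mathrm{Assh}\,M$, the product $xy$ is again a parameter element. Hence by Remark \ref{C3.2.2}(ii) we get $0:_M xy=U_M(0)$, so multiplication by $xy$ yields the short exact sequence
$$0\longrightarrow \overline{M}\overset{xy}{\longrightarrow} M\longrightarrow M/xyM\longrightarrow 0.$$

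Next, write $\varphi_j\colon H^j_I(\overline{M})\to H^j_I(M)$ for the map induced by $\overline{M}\overset{xy}{\to}M$ in the associated long exact sequence. I claim $\varphi_j=0$ for all $j<t$. Indeed, since $x\in\mathfrak{b}(M)\subseteq\mathrm{Ann}\,U_M(0)$, the map $\mu_x\colon\overline{M}\to M$, $\bar m\mapsto xm$, is well defined, and $\overline{M}\overset{xy}{\to}M$ factors as $\overline{M}\overset{\mu_x}{\to}M\overset{y}{\to}M$, the last arrow being multiplication by $y$ on $M$. Applying $H^j_I(-)$, the map $\varphi_j$ factors through multiplication by $y$ on $H^j_I(M)$; as $y\in\mathfrak{b}(M)$ and $\mathfrak{b}(M)H^j_I(M)=0$ for $j<t$ by Proposition \ref{M3.1.11}, we get $\varphi_j=0$. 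This is exactly where the hypothesis that $xy$ factors as a product of two elements of $\mathfrak{b}(M)$ enters.

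Now for $i<t-1$ both $\varphi_i$ and $\varphi_{i+1}$ vanish, so the relevant stretch
$$H^i_I(\overline{M})\overset{\varphi_i}{\to}H^i_I(M)\to H^i_I(M/xyM)\to H^{i+1}_I(\overline{M})\overset{\varphi_{i+1}}{\to}H^{i+1}_I(M)$$
of the long exact sequence splits off the first asserted short exact sequence. For the ``furthermore'' take $i=t-1$: since $\varphi_{t-1}=0$, the long exact sequence gives
$$0\to H^{t-1}_I(M)\to H^{t-1}_I(M/xyM)\to\ker\varphi_t\to 0,$$
and it remains to identify $\ker\varphi_t$. Let $\pi\colon M\to\overline{M}$ be the projection and $\pi_*\colon H^t_I(M)\to H^t_I(\overline{M})$ the induced map; the hypothesis ``$H^t_I(M)\cong H^t_I(\overline{M})$'' should be read as the assertion that $\pi_*$ is an isomorphism. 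Because $M\overset{\pi}{\to}\overline{M}\overset{xy}{\to}M$ and $\overline{M}\overset{xy}{\to}M\overset{\pi}{\to}\overline{M}$ are multiplication by $xy$ on $M$ and on $\overline{M}$ respectively, functoriality of $H^t_I$ gives $\varphi_t\circ\pi_*=xy\cdot\mathrm{id}$ on $H^t_I(M)$ and $\pi_*\circ\varphi_t=xy\cdot\mathrm{id}$ on $H^t_I(\overline{M})$. Since $\pi_*$ is an $R$-linear isomorphism, these identities give $\ker\varphi_t=0:_{H^t_I(\overline{M})}xy\cong 0:_{H^t_I(M)}xy$, which yields the displayed short exact sequence.

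The only real input is Proposition \ref{M3.1.11}; granting it, the argument is a formal chase of the long exact sequence, and this lemma is in essence the verification that $xy$ satisfies $(\sharp)$. The step that requires care is the bookkeeping at degree $t$: one must use the \emph{natural} comparison map $\pi_*$ — not merely an abstract isomorphism — to transport the annihilator submodule from $H^t_I(\overline{M})$ onto $H^t_I(M)$, which is why the isomorphism hypothesis has to be interpreted as a statement about $\pi_*$.
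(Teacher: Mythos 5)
Your proof is correct and follows essentially the same route as the paper: both arguments factor the map $\overline{M}\xrightarrow{xy}M$ through multiplication by $y$ on $M$ (the paper does this via a two-row commutative diagram comparing the $x$- and $xy$-sequences, you do it by direct composition), and then invoke Proposition \ref{M3.1.11} to conclude that the induced maps $H^j_I(\overline{M})\to H^j_I(M)$ vanish for $j<t$, which splits the long exact sequence into the asserted short pieces. Your explicit treatment of the ``furthermore'' step — reading the hypothesis as $\pi_*$ being an isomorphism and using $\pi_*\circ\varphi_t=xy\cdot\mathrm{id}$ to identify $\ker\varphi_t$ with $0:_{H^t_I(M)}xy$ — is a careful spelling-out of a point the paper leaves implicit (the paper simply labels the connecting map with $xy$ after the identification), but it is the same underlying argument.
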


\begin{proof}
By Remark \ref{C3.2.2} (ii) we have $U_M(0) = 0:_Mx = 0:_Mxy$. Therefore the following diagram commutes
\[\divide\dgARROWLENGTH by 2
\begin{diagram}
\node{0}\arrow{e}\node{\overline{M}}
\arrow{e,t}{x}\arrow{s,l}{\mathrm{id}}\node{M}\arrow{e}\arrow{s,l}{y}
\node{M/xM}\arrow{s}\arrow{e} \node{0}\\
\node{0}\arrow{e}\node{\overline{M}}
\arrow{e,t}{xy}\node{M}\arrow{e}\node{M/xyM}\arrow{e} \node{0.}
\end{diagram}
\]
Applying the functor $H^{i}_I(\bullet)$ to the above diagram we obtain the following commutative diagram for all $i < t-1$
\[\divide\dgARROWLENGTH by 2
\begin{diagram}
\node{\cdots}\arrow{e}\node{H^{i}_I(\overline{M})}
\arrow{e,t}{\psi^i}\arrow{s,l}{\mathrm{id}}\node{H^{i}_I(M)}\arrow{e}\arrow{s,l}
{y}
\node{H^{i}_I(M/xM)}\arrow{s}\arrow{e} \node{\cdots}\\
\node{\cdots}\arrow{e}\node{H^{i}_I(\overline{M})}
\arrow{e,t}{\varphi^i}\node{H^{i}_I(M)}\arrow{e}\node{H^{i}_I(M/xyM)}\arrow{e}
\node{\cdots,}
\end{diagram}
\]
where $\psi^i$ and $\varphi^i$ are derived from homomorphisms $\overline{M }\overset{x}{\to}M$ and $\overline{M}
\overset{xy}{\to}M$, respectively. By Proposition \ref{M3.1.11},
$yH^{i}_I(M)=0$ for all $i \leq t-1$, which implies $\varphi^i =0$ for all $i
\leq t-1$. Thus we have the short exact sequences
$$0 \rightarrow H^i_I(M) \rightarrow H^i_I(M/xyM) \rightarrow
H^{i+1}_I(\overline{M}) \rightarrow 0$$ for all $i < t-1$. For $i = t-1$ we have the exact sequence
$$0 \rightarrow H^{t-1}_I(M) \rightarrow H^{t-1}_I(M/xyM) \rightarrow
H^{t}_I(\overline{M}) \overset{xy}{\rightarrow} H^t_I(M).$$
Moreover,
if $H^{t}_I(M) \cong H^{t}_I(\overline{M})$ then we get the following short exact sequence
$$0 \rightarrow H^{t-1}_I(M) \rightarrow H^{t-1}_I(M/xyM) \rightarrow
0:_{H^{t}_I(M)}xy \rightarrow 0.$$
\end{proof}

Let $xy$ be a parameter element of $M$ such that $x, y \in \frak b(M)$. Lemma
\ref{B3.2.3} says that
$xy$ satisfies the (SES) condition at degrees $i < t-1$ with $t = d - \dim R/I$ and $U = U_M(0)$ as mentioned in Section 2.
Let $x \in \mathfrak{b}(M)^2$ be a parameter element of $M$, for all $i
< t-1$, we denote by $E^i_x$ the element in
$\mathrm{Ext}^1_R(H^{i+1}_I(\overline{M}), H^i_I(M))$ represented by the following short exact sequence provided it is determined
$$0 \rightarrow H^i_I(M) \rightarrow H^i_I(M/xM) \rightarrow
H^{i+1}_I(\overline{M}) \rightarrow 0.$$ In the case $i=t-1$ and
assume that $H^{t}_I(M) \cong H^{t}_I(\overline{M})$, we have the short exact sequence
$$0 \rightarrow H^{t-1}_I(M) \rightarrow H^{t-1}_I(M/xM) \rightarrow
0:_{H^{t}_I(M)}x \rightarrow 0.$$
Suppose we obtain the following short exact sequence by applying the $\mathrm{Hom}(R/ \frak b(M), \bullet)$ to above short exact sequence
$$0 \rightarrow H^{t-1}_I(M) \rightarrow 0:_{H^{t-1}_I(M/xM)}\mathfrak{b}(M)
 \rightarrow
0:_{H^{t}_I(M)}\mathfrak{b}(M) \rightarrow 0.$$
Then we denote by $F^{t-1}_{x}$ the element of $\mathrm{Ext}^1_R(0:_{H^{t}_I(M)}\mathfrak{b}(M), H^{t-1}_I(M))$
represented by the above short exact sequence. The main result of this section is as follows.
\begin{theorem}\label{D3.2.4} Let $M$ be a finitely generated $R$-module of dimension $d$, $I \supseteq \mathrm{Ann}\, M$ an ideal of $R$ and
$x$ a parameter element of $M$. Let $U_M(0)$ be the unmixed component of $M$ and set $\overline{M} = M/U_M(0)$. Let
$t = d -\dim R/I$. Then
\begin{enumerate}[{(i)}]\rm
\item {\it If $x \in \mathfrak{b}(M)^2$ then $E^i_x$ is determined for all $i<t-1$.}
\item {\it If $x \in \mathfrak{b}(M)^3$ then $E^i_x = 0$ for all $i<t-1$. Moreover, if $H^{t}_I(M) \cong H^{t}_I(\overline{M})$
then $F^{t-1}_{x} = 0$.}
\end{enumerate}
\end{theorem}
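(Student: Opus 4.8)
The plan is to reduce both assertions to the known behaviour of $E^i$ and $F^{t-1}$ under sums and products of elements of $\mathfrak b(M)$: one decomposes $x$ by Corollary~\ref{C2.16} and transports the conclusion across the decomposition via Theorems~\ref{T2.13} and \ref{T2.14}. Two observations set the stage. First, $\dim R/\mathfrak b(M)<d$ --- since $\mathfrak a(M)\subseteq\mathfrak b(M)$ by Remark~\ref{C3.1.2}(i) and $\dim R/\mathfrak a(M)<d$ by Remark~\ref{C3.1.2}(ii) --- so $\dim R/\mathfrak b(M)^{n}<d$ for every $n$. Second, if $z\in\mathfrak b(M)$ is a parameter element of $M$ then $0:_M z=U_M(0)$ by Remark~\ref{C3.2.2}(ii); hence for every parameter element of $M$ lying in $\mathfrak b(M)$ the submodule attached to the condition $(\sharp)$ is $U_M(0)$, so $\overline M=M/U_M(0)$ and $t=d-\dim R/I$ are common ambient data. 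This matters because Theorems~\ref{T2.13} and \ref{T2.14} relate $E^i$ and $F^{t-1}$ of two elements only when both carry the same such datum.

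The engine used in both parts is the following. Suppose $x=y_1+\cdots+y_r$, where every $y_i$ and every partial sum $y_1+\cdots+y_k$ is a parameter element of $M$ lying in $\mathfrak b(M)$, and every $y_i$ satisfies $(\sharp)$. Then induction on $k$ via Theorem~\ref{T2.13}(i) --- whose hypothesis $0:_M(y_1+\cdots+y_k)=U_M(0)$ is supplied by the second observation --- shows that $x$ satisfies $(\sharp)$ and $E^i_x=\sum_i E^i_{y_i}$ for all $i<t-1$; if moreover $H^t_I(M)\cong H^t_I(\overline M)$ and every $F^{t-1}_{y_i}$ is determined, the same induction with Theorem~\ref{T2.13}(ii) shows $F^{t-1}_x$ is determined and $F^{t-1}_x=\sum_i F^{t-1}_{y_i}$.

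For (i), apply Corollary~\ref{C2.16} with $\mathfrak a=\mathfrak b=\mathfrak b(M)$ (legitimate since $\dim R/\mathfrak b(M)^2<d$) to write $x=a_1b_1+\cdots+a_rb_r$ with $a_i,b_i\in\mathfrak b(M)$ parameter elements of $M$ and all partial sums parameter elements. Each product $a_ib_i$ lies in $\mathfrak b(M)$, is a parameter element of $M$, and satisfies $(\sharp)$ by Lemma~\ref{B3.2.3}; the engine then yields that $x$ satisfies $(\sharp)$, that is, $E^i_x$ is determined for all $i<t-1$. For (ii), apply Corollary~\ref{C2.16} instead with $\mathfrak a=\mathfrak b(M)^2$ and $\mathfrak b=\mathfrak b(M)$ to write $x=a_1b_1+\cdots+a_rb_r$ with $a_i\in\mathfrak b(M)^2$ and $b_i\in\mathfrak b(M)$ parameter elements of $M$ and all partial sums parameter elements. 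By (i) each $a_i$ satisfies $(\sharp)$, and by Proposition~\ref{M3.1.11} each $b_i$ annihilates $H^j_I(M)$ for all $j<t$, hence annihilates $\operatorname{Ext}^1_R(H^{j+1}_I(\overline M),H^j_I(M))$ for $j<t-1$; therefore Theorem~\ref{T2.14}(i) gives that $a_ib_i$ satisfies $(\sharp)$ with $E^j_{a_ib_i}=b_iE^j_{a_i}=0$ for $j<t-1$, and the engine forces $E^i_x=0$ for all $i<t-1$. If in addition $H^t_I(M)\cong H^t_I(\overline M)$, then Theorem~\ref{T2.14}(ii) applies to each pair $(a_i,b_i)$ and yields that $F^{t-1}_{a_ib_i}$ is determined and equal to $0$; the $F$-part of the engine then gives that $F^{t-1}_x$ is determined and $F^{t-1}_x=0$.

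The step I expect to need the most care is not a computation but the bookkeeping of the two running hypotheses --- that the element satisfies $(\sharp)$, and, in the final clause, that $F^{t-1}$ is determined --- along the partial sums: Theorems~\ref{T2.13} and \ref{T2.14} propagate these only when the two operands share the same submodule $U$, which here is guaranteed precisely because every parameter element that occurs (each $a_ib_i$ and each partial sum) lies in $\mathfrak b(M)$, forcing $0:_M(\cdot)=U_M(0)$. This constraint is also what pins down the exponents: for $x\in\mathfrak b(M)^2$ it is the factor $b_i\in\mathfrak b(M)$, annihilating the relevant local cohomology, that makes the product $a_ib_i$ satisfy $(\sharp)$ via Lemma~\ref{B3.2.3}; for $x\in\mathfrak b(M)^3$ the factor $a_i\in\mathfrak b(M)^2$ already satisfies $(\sharp)$ by (i) and the further factor $b_i\in\mathfrak b(M)$ upgrades $E^i_x$ and $F^{t-1}_x$ from being determined to being zero, while the products still lie in $\mathfrak b(M)$.
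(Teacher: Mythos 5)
Your proof is correct and follows the same strategy as the paper's: decompose $x$ via Corollary~\ref{C2.16} into a sum $a_1b_1+\cdots+a_rb_r$ with all factors and partial sums parameter elements in $\mathfrak b(M)$, verify $(\sharp)$ for each $a_kb_k$ via Lemma~\ref{B3.2.3} or part~(i), and transport $E^i$ and $F^{t-1}$ along the partial sums with Theorems~\ref{T2.13} and~\ref{T2.14}. The only substantive difference is in the final clause of (ii): you obtain $F^{t-1}_{a_kb_k}=0$ by citing Theorem~\ref{T2.14}(ii) directly, whereas the paper re-derives this by an explicit diagram chase producing a section of the surjection $\pi$; both are valid, and your route is the more economical one since Theorem~\ref{T2.14}(ii) is already on record (and your use of Theorem~\ref{T2.14}(i) plus the annihilation $b_iH^j_I(M)=0$ for the $E$-part avoids the extraneous hypothesis $H^t_I(M)\cong H^t_I(\overline M)$ that a literal reading of Theorem~\ref{T2.14}(ii) would impose there).
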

\begin{proof} (i) Notice that $\frak b(M) \nsubseteq \frak p$ for all $\frak p \in \mathrm{Assh}\,M$. By Corollary \ref{C2.16} there exist parameter elements $a_1,
\ldots, a_r, b_1, \ldots, b_r \in \mathfrak{b}(M)$ of $M$ such that
$x=a_1b_1+ \cdots + a_rb_r$, and $a_1b_1+ \cdots +a_jb_j $ are parameter elements for all $j \leq
r$. By Lemma \ref{B3.2.3} $E^i_{a_kb_k}$ is determined for all $i < t-1$ and for all $1 \leq k \leq r$. By Theorem \ref{T2.13} we have that
$$E^i_x = E^i_{a_1b_1} + \cdots +
E^i_{a_rb_r}$$
is determined for all $i < t-1$.\\
(ii) Similarly, we choose parameter elements $a_1,
\ldots, a_r \in \frak b(M)^2$ and $b_1, \ldots, b_r \in \mathfrak{b}(M)$ of $M$ such that
$x=a_1b_1+ \cdots + a_rb_r$, and $a_1b_1+ \cdots +a_jb_j $ are parameter elements for all $j \leq
r$. By Theorem \ref{T2.14} (ii) we have $E^i_{a_kb_k} = 0$ for all $i < t-1$ and for all $1 \leq k \leq r$. So $E^i_x = 0$ for all $i<t-1$.\\
 For the last assertion, by the same method, it is sufficient to show that $F^{t-1}_{ab} = 0$ for all parameter elements $a \in \frak b(M)^2$ and $b \in \frak b(M)$ provided $H^t_I(M) \cong H^t_I(\overline{M})$. Indeed, since $E^i_a$ and $E^i_{ab}$ are determined for all $i<t-1$, the commutative diagram
\[\divide\dgARROWLENGTH by 2
\begin{diagram}
\node{0}\arrow{e}\node{\overline{M}}
\arrow{e,t}{a}\arrow{s,l}{\mathrm{id}}\node{M}\arrow{e}\arrow{s,l}{b}
\node{M/aM}\arrow{s}\arrow{e} \node{0}\\
\node{0}\arrow{e}\node{\overline{M}}
\arrow{e,t}{ab}\node{M}\arrow{e}\node{M/abM}\arrow{e} \node{0,}
\end{diagram}
\]
yields the following diagram
\[\divide\dgARROWLENGTH by 2
\begin{diagram}
\node{0}\arrow{e}\node{H^{t-1}_I(M)}
\arrow{e,t}{i}\arrow{s,l}{b}\node{ H^{t-1}_I(M/aM)}\arrow{e}\arrow{s,l}{\beta}
\node{0:_{H^{t}_I(M)}a}\arrow{s,l}{\alpha}\arrow{e} \node{0}\\
\node{0}\arrow{e}\node{H^{t-1}_I(M)}
\arrow{e,t}{\delta}\node{ H^{t-1}_I(M/abM)}\arrow{e,t}{\pi}\node{0:_{H^{t}_I(M)}ab}\arrow{e} \node{0,}
\end{diagram}
\]
where $\alpha: 0:_{H^{t}_I(M)}a \to 0:_{H^{t}_I(M)}ab$ is injective. By Proposition \ref{M3.1.11} $b H^{t-1}_I(M) = 0$, so $\beta \circ i = 0$. Thus we have a homomorphism $\epsilon: 0:_{H^{t}_I(M)}a \to  H^{t-1}_I(M/abM)$ which makes the following diagram commute
\[\divide\dgARROWLENGTH by 2
\begin{diagram}
\node{0}\arrow{e}\node{H^{t-1}_I(M)}
\arrow{e,t}{i}\arrow{s,l}{b}\node{ H^{t-1}_I(M/aM)}\arrow{e}\arrow{s,l}{\beta}
\node{0:_{H^{t}_I(M)}a}\arrow{s,l}{\alpha}\arrow{sw,t}{\epsilon}\arrow{e} \node{0}\\
\node{0}\arrow{e}\node{H^{t-1}_I(M)}
\arrow{e,t}{\delta}\node{ H^{t-1}_I(M/abM)}\arrow{e,t}{\pi}\node{0:_{H^{t}_I(M)}ab}\arrow{e} \node{0,}
\end{diagram}
\]
By applying the $\mathrm{Hom}_R(R/\frak b(M), \bullet)$ to the above diagram we have the following diagram
\[\divide\dgARROWLENGTH by 2
\begin{diagram}
\node{}\node{} \node{}
\node{0:_{H^{t}_I(M)} \frak b(M)}\arrow{sw,t}{\epsilon}\arrow{s,l}{\mathrm{id}}\\
\node{0}\arrow{e}\node{H^{t-1}_I(M)}
\arrow{e}\node{ 0:_{H^{t-1}_I(M/abM)} \frak b(M)}\arrow{e,t}{\pi}\node{0:_{H^{t}_I(M)}\frak b(M),}
\end{diagram}
\]
where the row is an exact sequence, and the vertical map is  the identity
map. Since $\pi \circ \epsilon = \mathrm{id}$, the homomorphism $\pi$ is split. Thus $F^{t-1}_{ab} = 0$. The proof is complete.
\end{proof}
In the case $I = \frak  m$, we obtain a generalization of \cite[Corollary 4.1]{CQ11} and \cite[Proposition 3.4]{Q12}.
\begin{corollary}\label{H3.2.5} Let $x \in \mathfrak{b}(M)^3$ be a parameter element of $M$. Let $U_M(0)$ be the unmixed component of $M$ and set $\overline{M} = M/U_M(0)$.
 Then
$$H^i_{\mathfrak{m}}(M/xM) \cong H^i_{\mathfrak{m}}(M) \oplus H^{i+1}_{\mathfrak{m}}(\overline{M})$$
for all $i<d-1$, and
$$0:_{H^{d-1}_{\mathfrak{m}}(M/xM)}\mathfrak{b}(M)  \cong H^{d-1}_{\mathfrak{m}}(M) \oplus
0:_{H^{d}_{\mathfrak{m}}(M)}\mathfrak{b}(M).$$
\end{corollary}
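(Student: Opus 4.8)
The plan is to read off Corollary~\ref{H3.2.5} as the special case $I=\mathfrak m$ of Theorem~\ref{D3.2.4}. First I would do the bookkeeping: when $I=\mathfrak m$ we have $\dim R/\mathfrak m=0$, so the integer $t=d-\dim R/I$ appearing in Theorem~\ref{D3.2.4} equals $d$; thus the range ``$i<t-1$'' becomes ``$i<d-1$'' and the exceptional index ``$i=t-1$'' becomes ``$i=d-1$''. Since $\mathfrak b(M)^3\subseteq\mathfrak b(M)^2\subseteq\mathfrak b(M)$, the given parameter element $x$ lies in each of these three ideals, so every hypothesis of Theorem~\ref{D3.2.4} is available.

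For the range $i<d-1$, Theorem~\ref{D3.2.4}(i) (applicable since $x\in\mathfrak b(M)^2$) says that $E^i_x$ is determined, i.e. that there is a short exact sequence
$$0\longrightarrow H^i_{\mathfrak m}(M)\longrightarrow H^i_{\mathfrak m}(M/xM)\longrightarrow H^{i+1}_{\mathfrak m}(\overline M)\longrightarrow 0,$$
and Theorem~\ref{D3.2.4}(ii) (applicable since $x\in\mathfrak b(M)^3$) says $E^i_x=0$, so this sequence splits. This gives the first isomorphism.

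For the top index $i=d-1$ I must supply the extra hypothesis of Theorem~\ref{D3.2.4}, namely $H^t_{\mathfrak m}(M)\cong H^t_{\mathfrak m}(\overline M)$, that is $H^d_{\mathfrak m}(M)\cong H^d_{\mathfrak m}(\overline M)$. I would get this from the defining exact sequence $0\to U_M(0)\to M\to\overline M\to 0$: because $\dim U_M(0)<d$, Grothendieck's vanishing theorem gives $H^j_{\mathfrak m}(U_M(0))=0$ for all $j\ge d$, and the long exact cohomology sequence then forces $H^d_{\mathfrak m}(M)\to H^d_{\mathfrak m}(\overline M)$ to be an isomorphism, compatible with multiplication by $x$. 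With this in hand, Theorem~\ref{D3.2.4}(ii) (again using $x\in\mathfrak b(M)^3$) says that $F^{d-1}_x$ is determined and equals $0$; by the very definition of $F^{d-1}_x$, this means that applying $\Hom_R(R/\mathfrak b(M),-)$ to the sequence $0\to H^{d-1}_{\mathfrak m}(M)\to H^{d-1}_{\mathfrak m}(M/xM)\to 0:_{H^d_{\mathfrak m}(M)}x\to 0$ yields a \emph{split} short exact sequence
$$0\longrightarrow H^{d-1}_{\mathfrak m}(M)\longrightarrow 0:_{H^{d-1}_{\mathfrak m}(M/xM)}\mathfrak b(M)\longrightarrow 0:_{H^d_{\mathfrak m}(M)}\mathfrak b(M)\longrightarrow 0.$$
Here I have used Proposition~\ref{M3.1.11}, which gives $\mathfrak b(M)H^{d-1}_{\mathfrak m}(M)=0$ and hence collapses the left-hand term $\Hom_R(R/\mathfrak b(M),H^{d-1}_{\mathfrak m}(M))$ to $H^{d-1}_{\mathfrak m}(M)$, together with $x\in\mathfrak b(M)$, which gives $\Hom_R(R/\mathfrak b(M),\,0:_{H^d_{\mathfrak m}(M)}x)=0:_{H^d_{\mathfrak m}(M)}\mathfrak b(M)$. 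Splitting this sequence yields the second isomorphism, completing the proof.

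The whole argument is essentially a citation of Theorem~\ref{D3.2.4}, so the only point requiring genuine care is the verification $H^d_{\mathfrak m}(M)\cong H^d_{\mathfrak m}(\overline M)$, which is what legitimizes invoking the $i=d-1$ part of that theorem, together with the two routine identifications that reconcile the abstract $F^{d-1}_x$-sequence with the displayed statement; I do not expect any further obstacle.
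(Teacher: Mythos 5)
Your proof is correct and is exactly the intended specialization: the paper gives no separate argument for Corollary~\ref{H3.2.5}, presenting it as the $I=\mathfrak m$ case of Theorem~\ref{D3.2.4}, and your write-up simply makes explicit the bookkeeping ($t=d$), the splitting of the $E^i_x$-sequences for $i<d-1$, and the verification of the hypothesis $H^d_{\mathfrak m}(M)\cong H^d_{\mathfrak m}(\overline M)$ via the long exact sequence for $0\to U_M(0)\to M\to\overline M\to 0$ and Grothendieck vanishing, which is needed before one may invoke the $F^{d-1}_x=0$ clause of Theorem~\ref{D3.2.4}(ii). The two routine identifications you note (that $\mathfrak b(M)$ kills $H^{d-1}_{\mathfrak m}(M)$ by Proposition~\ref{M3.1.11}, and that $0:_{0:_{H^d_{\mathfrak m}(M)}x}\mathfrak b(M)=0:_{H^d_{\mathfrak m}(M)}\mathfrak b(M)$ because $x\in\mathfrak b(M)$) are precisely what reconciles the abstract $F^{d-1}_x$-sequence with the displayed statement.
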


See \cite{CQ20} for an application of the Splitting Theorems on the invariant of index of reducibility of parameter ideals. By the above Splitting Theorems, it is natural to consider the following system of parameters.
\begin{definition}[\cite{MQ16}, Definition 2.15] \rm A parameter element $x\in \frak b(M)^3$ is called a {\it $C$-parameter element} of $M$. A system of parameters $x_1, ..., x_d$ is called a {\it $C$-system of parameters} of $M$ if $x_d \in \mathfrak b(M)^3$ and $x_i \in \mathfrak b(M/(x_{i+1}, ..., x_d)M)^3$ for all $i = d-1, ..., 1$. A sequence of elements $x_i, \ldots, x_d$ is called {\it a part of $C$-system of parameters} if we can expand it to a $C$-system of parameters $x_1, \ldots, x_d$.
\end{definition}

It is evident that $C$-systems of parameters are closely related with $p$-standard systems of parameters. Lemmas below will be very useful in the sequel.

\begin{lemma} \label{B3.1.9}
Let $x$ be a parameter element of $M$. Then $\frak
b(M) \subseteq \frak b(M/xM)$.
\end{lemma}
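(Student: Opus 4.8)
The plan is to reduce the claim to a statement about systems of parameters. Recall that by definition $\frak b(N) = \bigcap_{\underline{y}, i} \Ann\, (0:y_i)_{N/(y_1,\ldots,y_{i-1})N}$, where $\underline{y}$ ranges over all systems of parameters of $N$. So to prove $\frak b(M) \subseteq \frak b(M/xM)$, it suffices to show that for every system of parameters $y_1, \ldots, y_{d-1}$ of $M/xM$ (note $\dim M/xM = d-1$) and every $i \le d-1$, we have $\frak b(M) \cdot (0:y_i)_{(M/xM)/(y_1,\ldots,y_{i-1})(M/xM)} = 0$.

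The key observation is that if $y_1, \ldots, y_{d-1}$ is a system of parameters of $M/xM$, then $x, y_1, \ldots, y_{d-1}$ is a system of parameters of $M$ (reorder as needed). More to the point, $(M/xM)/(y_1,\ldots,y_{i-1})(M/xM) \cong M/(x, y_1, \ldots, y_{i-1})M$, and the sequence $x, y_1, \ldots, y_{i-1}$ extends to the system of parameters $x, y_1, \ldots, y_{d-1}$ of $M$. Hence, applying the definition of $\frak b(M)$ to this particular system of parameters of $M$ with the parameter element in position $i+1$ being $y_i$, we get
$$\frak b(M) \cdot (0:y_i)_{M/(x, y_1,\ldots,y_{i-1})M} = 0.$$
Since $(0:y_i)_{M/(x,y_1,\ldots,y_{i-1})M} = (0:y_i)_{(M/xM)/(y_1,\ldots,y_{i-1})(M/xM)}$, this is exactly what we needed. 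Taking the intersection over all systems of parameters $\underline{y}$ of $M/xM$ and all $i \le d-1$ yields $\frak b(M) \subseteq \frak b(M/xM)$.

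The only subtlety — and I expect it to be the main (minor) obstacle — is the ordering convention in the definition of $\frak b$: the definition singles out $(0:y_i)$ modulo $(y_1,\ldots,y_{i-1})$, so one must be careful that placing $x$ first in the system of parameters $x, y_1, \ldots, y_{d-1}$ of $M$ really does let one read off the annihilation of $(0:y_i)_{M/(x,y_1,\ldots,y_{i-1})M}$. This is fine because the definition of $\frak b(M)$ quantifies over \emph{all} systems of parameters (and implicitly all orderings), so the ordered sequence $x, y_1, \ldots, y_{d-1}$ is a legitimate choice, with $y_i$ occupying slot $i+1$. One should also note at the outset that $x$ is genuinely a parameter element, so $\dim M/xM = d-1$ and every system of parameters of $M/xM$ has length $d-1$, which makes the extension to a system of parameters of $M$ valid.
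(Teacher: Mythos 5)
Your proof is correct and is exactly what the paper's one-line proof (``It follows from the definition of $\frak b(M)$'') is pointing at: extend a system of parameters $y_1,\ldots,y_{d-1}$ of $M/xM$ to the system of parameters $x,y_1,\ldots,y_{d-1}$ of $M$, identify $(M/xM)/(y_1,\ldots,y_{i-1})(M/xM)$ with $M/(x,y_1,\ldots,y_{i-1})M$, and read off the required annihilation from the intersection defining $\frak b(M)$. Same approach, just spelled out.
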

\begin{proof}
It follows from the definition of $\frak b(M)$.
\end{proof}
\begin{lemma}\label{B3.1.10} Let $x_1, \ldots, x_d$ be a $C$-system of parameters of $M$. Then $x_1, \ldots, x_{j-1},x_{j+1},\ldots,x_d$
is a $C$-system of parameters of $M/x_jM$ for all $j \leq d$.
\end{lemma}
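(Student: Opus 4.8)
The plan is to verify the two defining conditions of a $C$-system of parameters directly, splitting the bookkeeping at the deleted index $j$. Write $M' := M/x_jM$. First I would dispose of the structural part: since $x_j$ is a parameter element of $M$ we have $\dim M' = d-1$, and $x_1,\dots,x_{j-1},x_{j+1},\dots,x_d$ is a system of parameters of $M'$ (it has $d-1$ elements and $M'/(x_1,\dots,x_{j-1},x_{j+1},\dots,x_d)M' = M/(x_1,\dots,x_d)M$ has finite length). So it only remains to check, for each $i\neq j$, that $x_i$ lies in the cube of the ideal $\mathfrak b$ of the residue module of $M'$ occurring in the $C$-condition for $M'$, namely $M'/(x_k : k>i,\ k\neq j)M'$.

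For an index $i>j$ the element $x_j$ does not occur among $x_{i+1},\dots,x_d$, so that residue module is
$$M'/(x_{i+1},\dots,x_d)M' = M/(x_j,x_{i+1},\dots,x_d)M.$$
Set $N := M/(x_{i+1},\dots,x_d)M$. By hypothesis $x_i\in\mathfrak b(N)^3$, and $x_j$ — being among $x_1,\dots,x_i$, which is a system of parameters of $N$ — is a parameter element of $N$, so Lemma~\ref{B3.1.9} gives $\mathfrak b(N)\subseteq\mathfrak b(N/x_jN)$. Cubing this inclusion, $x_i\in\mathfrak b(N)^3\subseteq\mathfrak b\bigl(M/(x_j,x_{i+1},\dots,x_d)M\bigr)^3$, which is exactly the membership required. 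For an index $i<j$ the element $x_j$ \emph{does} occur among the parameters of index exceeding $i$, so the residue module collapses:
$$M'/(x_{i+1},\dots,x_{j-1},x_{j+1},\dots,x_d)M' = M/(x_{i+1},\dots,x_d)M,$$
and the condition $x_i\in\mathfrak b\bigl(M/(x_{i+1},\dots,x_d)M\bigr)^3$ is literally the hypothesis that $x_1,\dots,x_d$ is a $C$-system of parameters of $M$; nothing is to be proved in this range.

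The only place where any argument happens is the case $i>j$, and the crux there is the monotonicity statement Lemma~\ref{B3.1.9} combined with the observation that $x_j$ is still a parameter element after passing to the quotient $N$. Everything else is matching up which residue module appears once $x_j$ has been deleted, so I expect no genuine obstacle; the proof is essentially a careful reindexing.
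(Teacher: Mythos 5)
Your proof is correct and rests on exactly the same ingredients as the paper's: the monotonicity $\mathfrak b(N)\subseteq\mathfrak b(N/xN)$ from Lemma~\ref{B3.1.9}, applied to each relevant residue module, plus the bookkeeping of which residue module appears once $x_j$ is deleted. The only organizational difference is that you verify the defining conditions for each index $i$ directly (splitting at $i<j$ versus $i>j$), whereas the paper packages the same applications of Lemma~\ref{B3.1.9} into an induction on $d$ after disposing of the base case $j=d$.
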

\begin{proof}
The case $j=d$ is clear. For $j \neq d$ by Lemma
\ref{B3.1.9} the inclusion $\frak b(M) \subseteq \frak b(M/x_jM)$ holds. Therefore $x_d$ is a $C$-parameter element of $M/x_jM$. Notice that
$x_1, \ldots,x_{d-1}$ is a $C$-system of parameters of $M/x_dM$. The claim follows from the induction on $d$.
\end{proof}

\section{The Cohen-Macaulay deviated sequences}

In this section, we use the Splitting Theorem \ref{D3.2.4} to shed a new light on the structure of non-Cohen-Macaulay modules. Let $M$ be a finitely generated $R$-module of dimension $d$. The unmixed characterization of Cohen-Macaulay modules says that $M$ is Cohen-Macaulay if and only if for some (and hence for all) system of parameters
$x_1,\ldots,x_d$ we have $U_{M/(x_{i+1},\ldots,x_d)M}(0) = 0$ for all $1 \leq i
\leq d$. If $M$ is a generalized Cohen-Macaulay module, and $n_0$ is an integer such
that $\frak
m^{n_0}H^i_{\mathfrak{m}}(M) = 0$ for all $i<d$, then by \cite[Corollary 4.2]{CQ11}
we have
$$U_{M/(x_{i+1},\ldots,x_d)M}(0) = H^0_{\mathfrak{m}}(M/(x_{i+1},\ldots,x_d)M)
\cong \bigoplus_{j=0}^{d-i} H^j_{\mathfrak{m}}(M)^{\binom{d-i}{j}},$$
for any system of parameters $x_1,\ldots,x_d \in \frak m^{2n_0}$. Thus $U_{M/(x_{i+1},\ldots,x_d)M}(0)$ is independent of the choice of system of parameters $x_1,\ldots,x_d$ contained in
$\frak m^{2n_0}$ for all $1 \leq i \leq d$ (up to an isomorphism). The aim of this section is to generalize this fact for any finitely generated $R$-module. Concretely, we will show that for all $1 \leq i \leq d$ the modules $U_{M/(x_{i+1},\ldots,x_d)M}(0)$
is independent (up to an isomorphism) of the choice of a $C$-system of parameters $x_1,
\ldots, x_d$. We start with the following result about the invariance of local cohomology of quotient modules regarding $C$-systems of parameters.

\begin{theorem} \label{D3.2.7}
Let $\underline{x} = x_1, \ldots, x_d$ be a $C$-system of parameters of $M$. Then the local cohomology module $H^j_{\mathfrak{m}}(M/(x_{i+1}, \ldots,x_d)M)$
is independent of the choice of $\underline{x}$ for all $j
< i < d$ (up to an isomorphism).
\end{theorem}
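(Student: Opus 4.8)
The plan is to prove the statement by descending induction on $i$, using the splitting isomorphism of Corollary \ref{H3.2.5} as the engine and Lemma \ref{B3.1.10} to propagate the $C$-parameter condition down the tower of quotients. Fix $j < i < d$ and let $\underline{x} = x_1,\dots,x_d$ and $\underline{y} = y_1,\dots,y_d$ be two $C$-systems of parameters of $M$. I want to show $H^j_{\mathfrak m}(M/(x_{i+1},\dots,x_d)M) \cong H^j_{\mathfrak m}(M/(y_{i+1},\dots,y_d)M)$. The natural first reduction is to the case where the two systems differ in only one slot, say the last: it suffices to compare $H^j_{\mathfrak m}(M/(x_{i+1},\dots,x_{d-1},x_d)M)$ with $H^j_{\mathfrak m}(M/(x_{i+1},\dots,x_{d-1},y_d)M)$ for two $C$-parameter elements $x_d, y_d$, and then change one coordinate at a time. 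Actually the cleanest formulation is: prove directly that for a single $C$-parameter element $x_d \in \mathfrak b(M)^3$, the module $H^j_{\mathfrak m}(M/(x_{i+1},\dots,x_d)M)$ depends only on $M$ and the data $x_{i+1},\dots,x_{d-1}$ up to isomorphism, and then induct.

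The key computation: by Lemma \ref{B3.1.10}, for a $C$-system of parameters $x_1,\dots,x_d$ of $M$, the element $x_d$ is a $C$-parameter element of $M$, and $x_{i+1},\dots,x_{d-1}$ extends (together with $x_1,\dots,x_i$) to a $C$-system of parameters of $M/x_dM$; iterating, $x_d$ is a $C$-parameter element of each $M/(x_{k+1},\dots,x_{d-1})M$ for $i \le k \le d-1$. Now apply Corollary \ref{H3.2.5} with the $C$-parameter element $x_d$ to the module $N := M/(x_{i+1},\dots,x_{d-1})M$ (of dimension $d-(d-1-i) = i+1 > j$, so that $j < \dim N - 1$ is exactly the hypothesis $j<i$): this gives
$$H^j_{\mathfrak m}(N/x_dN) \cong H^j_{\mathfrak m}(N) \oplus H^{j+1}_{\mathfrak m}(N/U_N(0)),$$
i.e.
$$H^j_{\mathfrak m}(M/(x_{i+1},\dots,x_d)M) \cong H^j_{\mathfrak m}(M/(x_{i+1},\dots,x_{d-1})M) \oplus H^{j+1}_{\mathfrak m}\big((M/(x_{i+1},\dots,x_{d-1})M)/U_{(\cdot)}(0)\big).$$
On the right-hand side, $H^j_{\mathfrak m}(M/(x_{i+1},\dots,x_{d-1})M)$ is covered by the inductive hypothesis applied with $i$ replaced by $i+1$ (since $j < i < i+1$ and $M/(x_{i+1},\dots,x_{d-1})M$ is a quotient by the "tail" $x_{(i+1)+1},\dots,x_d$ of the $C$-system... wait, careful: it is $M$ modulo $x_{i+1},\dots,x_{d-1}$, which is $M/(x_{(i+1)+1},\dots,x_d)M$ only if we relabel — this is the induction step, comparing $d-1-i$ elements rather than $d-i$). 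For the second summand I need that $H^{j+1}_{\mathfrak m}$ of $(M/(x_{i+1},\dots,x_{d-1})M)$ modulo its unmixed component is independent of $\underline{x}$; since $j+1 \le i$, once we know $H^{j+1}_{\mathfrak m}(M/(x_{i+1},\dots,x_{d-1})M)$ and $H^{j+1}_{\mathfrak m}(M/(x_{i+1},\dots,x_{d-1})M / U)$ are invariant — the latter via the short exact sequence $0 \to U_N(0) \to N \to N/U_N(0) \to 0$ and the fact that $\dim U_N(0) < \dim N = i+1$ forces $H^{j+1}_{\mathfrak m}(N) \cong H^{j+1}_{\mathfrak m}(N/U_N(0))$ for $j+1 < i$ — we are done, and the edge case $j+1 = i$ needs the $\frak b$-truncated part of Corollary \ref{H3.2.5}.

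The base case is $i = j+1$: then $M/(x_{j+2},\dots,x_d)M$ has dimension $j+1$, and I claim $H^j_{\mathfrak m}$ of it is determined. Here I would run the same splitting but land in the second clause of Corollary \ref{H3.2.5}, which only controls $0:_{H^{d-1}_{\mathfrak m}(\cdot)}\mathfrak b(\cdot)$ rather than the whole top-minus-one cohomology; so the genuine content is to check that at the bottom of the tower the relevant $H^j$ is actually finite length (it is, being $H^{\dim - 1}$ of a module of dimension $\ge j+1$... no — it is $H^j$ with $j = \dim -1$, an Artinian module killed by $\frak b$ of the module since $j < \dim$), so that $0:_{H^j}\mathfrak b = H^j$ and the splitting is genuine. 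The main obstacle I anticipate is exactly this bookkeeping at the edge $j+1=i$ and making sure the isomorphisms produced by Corollary \ref{H3.2.5} for two different choices of $C$-parameter elements are compatible — but since the right-hand sides of those isomorphisms are manifestly independent of the chosen $C$-parameter element (they involve only $M$, the earlier parameters, and the unmixed component), this compatibility is automatic once the induction is set up correctly. I expect the whole argument to be a clean descending induction with the only delicate point being the correct indexing in the dimension count $\dim M/(x_{i+1},\dots,x_{d-1})M = i+1$ versus the constraint $j < i$.
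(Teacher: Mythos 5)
Your engine---applying Corollary~\ref{H3.2.5} with the $C$-parameter element $x_d$ to $N = M/(x_{i+1},\dots,x_{d-1})M$---is sound as far as it goes and matches the paper's base case $i=d-1$, but the reduction it produces does not close. The splitting
$H^j_{\mathfrak m}(M/(x_{i+1},\dots,x_d)M) \cong H^j_{\mathfrak m}(N) \oplus H^{j+1}_{\mathfrak m}(N/U_N(0))$
leaves you with $H^j_{\mathfrak m}(N)$, and $N = M/(x_{i+1},\dots,x_{d-1})M$ is \emph{not} a quotient of $M$ by the tail of a $C$-system of parameters of $M$: for that you would need $x_{d-1}\in\mathfrak b(M)^3$, whereas the $C$-condition only gives $x_{d-1}\in\mathfrak b(M/x_dM)^3$, a strictly weaker statement by Lemma~\ref{B3.1.9}. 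So no version of the theorem's inductive hypothesis applies to $N$, and the ``relabeling'' you gesture at does not fix this. There is also a separate error: the claimed isomorphism $H^{j+1}_{\mathfrak m}(N)\cong H^{j+1}_{\mathfrak m}(N/U_N(0))$ for $j+1\le i$ is false in general, since $\dim U_N(0)$ can be as large as $i=\dim N-1$, so $H^{j+1}_{\mathfrak m}(U_N(0))$ and $H^{j+2}_{\mathfrak m}(U_N(0))$ need not vanish and the long exact sequence of $0\to U_N(0)\to N\to N/U_N(0)\to 0$ yields no isomorphism. Finally, the ``change one slot at a time'' reduction is not available: replacing $x_d$ by $y_d$ alters the ideals $\mathfrak b(M/x_dM),\ \mathfrak b(M/(x_{d-1},x_d)M),\dots$ on which the $C$-condition for $x_{d-1},\dots,x_{i+1}$ is recursively built, so $x_{i+1},\dots,x_{d-1},y_d$ need not remain the tail of a $C$-system.

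The missing idea is the bridge that lets you compare two \emph{arbitrary} $C$-systems $\underline{x}$ and $\underline{y}$. The paper inducts on $\dim M$ and, in the step $i<d-1$, chooses a \emph{common} $C$-parameter element $z$ of both $M_{i+1}=M/(x_{i+2},\dots,x_d)M$ and $M_{i+1}'=M/(y_{i+2},\dots,y_d)M$ (these have dimension $i+1<d$, so such $z$ exists by prime avoidance). The inductive hypothesis for $M_{i+1}$ gives $H^j_{\mathfrak m}(M_i)=H^j_{\mathfrak m}(M_{i+1}/x_{i+1}M_{i+1})\cong H^j_{\mathfrak m}(M/(z,x_{i+2},\dots,x_d)M)$, symmetrically for $\underline{y}$, and then the inductive hypothesis for $M/zM$ (of dimension $d-1$), together with Lemma~\ref{B3.1.10} to see that $x_{i+2},\dots,x_d$ and $y_{i+2},\dots,y_d$ are tails of $C$-systems of $M/zM$, gives $H^j_{\mathfrak m}(M/(z,x_{i+2},\dots,x_d)M)\cong H^j_{\mathfrak m}(M/(z,y_{i+2},\dots,y_d)M)$. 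That common-element trick is what your plan needs and does not contain; without it the induction cannot compare two unrelated $C$-systems.
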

\begin{proof}
We set $M_i = M/(x_{i+1}, \ldots,x_d)M$ for all $i < d$. We consider another $C$-system of parameters $\underline{y} = y_1, \ldots, y_d$ of $M$, and put $M_i'
= M/(y_{i+1},\ldots,y_d)M$ for all $i < d$. We proceed by induction on $d$ that $H^j_{\mathfrak{m}}(M_i) \cong
H^j_{\mathfrak{m}}(M_i')$
for all $j < i < d$. The assertion is trivial if $d=1$. For $d>1$ and $i=d-1$ since $x_d$ and $y_d$ are $C$-parameter elements, Corollary \ref{H3.2.5} implies that
$$H^j_{\mathfrak{m}}(M_{d-1}) \cong H^j_{\mathfrak{m}}(M) \oplus
H^{j+1}_{\mathfrak{m}}(M/U_M(0)) \cong
H^j_{\mathfrak{m}}(M_{d-1}')$$ for all $j<d-1$. Suppose $i< d-1$. Since
$\dim R/\frak b(M_{i+1}) < \dim M_{i+1} = i+1$ and $\dim R/\frak b(M_{i+1}') < \dim M'_{i+1} = i+1$
we can choose a $C$-parameter element
$z$ of both $M_{i+1}$ and $M_{i+1}'$. By the inductive hypothesis we have
\[ H^j_{\mathfrak{m}}(M_{i}) = H^j_{\mathfrak{m}}(M_{i+1}/x_{i+1}M_{i+1})
\cong H^j_{\mathfrak{m}}(M/(z, x_{i+2},\ldots,x_d)M) \quad \quad \quad \quad \quad \quad \quad (1) \]
and
$$H^j_{\mathfrak{m}}(M_{i}') = H^j_{\mathfrak{m}}(M_{i+1}'/y_{i+1}M_{i+1}') \cong
H^j_{\mathfrak{m}}(M/(z, y_{i+2},\ldots,y_d)M)\quad \quad \quad \quad \quad \quad \quad (2)$$ for all $j<i$. Notice that $z, x_{i+2},\ldots,x_d$ and
$z, y_{i+2},\ldots,y_d$ are parts of $C$-systems of parameters of $M$. By Lemma \ref{B3.1.10} we have $x_{i+2},\ldots,x_d$ and
$y_{i+2},\ldots,y_d$ are parts of $C$-systems of parameters of $M/zM$. Applying the inductive hypothesis for $M/zM$ we have
$$H^j_{\mathfrak{m}}(M/(z, x_{i+2},\ldots,x_d)M) \cong H^j_{\mathfrak{m}}(M/(z, y_{i+2},\ldots,y_d)M) \quad \quad \quad \quad \quad \quad \quad \quad \quad \quad (3)$$
for all $j<i$. The assertion follows from the isomorphisms $(1)$, $(2)$ and $(3)$.
\end{proof}

\begin{corollary}\label{C invar ann}
  Let $\underline{x} = x_1, \ldots, x_d$ be a $C$-system of parameters of $M$. Then for all $i < d$, the ideals $\frak a(M/(x_{i+1}, \ldots,x_d)M)$ and $\sqrt{\frak a(M/(x_{i+1}, \ldots,x_d)M)} = \sqrt{\frak b(M/(x_{i+1}, \ldots,x_d)M)}$ are independent of the choice of $\underline{x}$.
\end{corollary}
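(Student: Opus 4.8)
The plan is to deduce this directly from Theorem \ref{D3.2.7} together with Schenzel's inclusions recorded in Remark \ref{C3.1.2}(i). Write $M_i = M/(x_{i+1},\ldots,x_d)M$. Since $x_{i+1},\ldots,x_d$ is part of a system of parameters of $M$ we have $\dim M_i = i$, so by the very definition of the ideal $\frak a(-)$,
$$\frak a(M_i) = \frak a_0(M_i)\,\frak a_1(M_i)\cdots\frak a_{i-1}(M_i) = \prod_{j=0}^{i-1}\Ann_R H^j_{\frak m}(M_i).$$
Each factor is the annihilator of a local cohomology module $H^j_{\frak m}(M_i)$ with $j < i$, and Theorem \ref{D3.2.7} says precisely that these modules are, up to $R$-module isomorphism, independent of the chosen $C$-system of parameters $\underline{x}$. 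Isomorphic modules have the same annihilator, so each $\frak a_j(M_i)$ with $0\le j\le i-1$, and therefore the product $\frak a(M_i)$, does not depend on $\underline{x}$. This settles the first assertion (the case $i=0$ being trivial, with $\frak a(M_0)$ the empty product $R$).

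For the radical statement I would first recall the elementary identity $\sqrt{I_1\cdots I_n}=\sqrt{I_1\cap\cdots\cap I_n}$, valid for any ideals $I_1,\ldots,I_n$ of $R$. Applying it to $\frak a(M_i)=\prod_{j=0}^{i-1}\frak a_j(M_i)$ and inserting the chain $\frak a(M_i)\subseteq\frak b(M_i)\subseteq\bigcap_{j=0}^{i-1}\frak a_j(M_i)$ of Remark \ref{C3.1.2}(i) gives
$$\sqrt{\frak a(M_i)}\subseteq\sqrt{\frak b(M_i)}\subseteq\sqrt{\bigcap_{j=0}^{i-1}\frak a_j(M_i)}=\sqrt{\frak a(M_i)},$$
so $\sqrt{\frak a(M_i)}=\sqrt{\frak b(M_i)}$. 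Since the left-hand side is the radical of an ideal already shown to be independent of $\underline{x}$, the common value $\sqrt{\frak a(M_i)}=\sqrt{\frak b(M_i)}$ is independent of $\underline{x}$ as well.

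I do not expect a genuine obstacle here; the one point worth checking is that the index set $\{0,\ldots,i-1\}=\{0,\ldots,\dim M_i-1\}$ of the factors of $\frak a(M_i)$ lies inside the range $\{j : j<i\}$ for which Theorem \ref{D3.2.7} guarantees the invariance of $H^j_{\frak m}(M_i)$, so the two ranges match exactly and no stray local cohomology module escapes the conclusion. It may also be worth remarking for the reader that the ideals $\frak b(M_i)$ (and indeed the individual $\frak a_j(M_i)$, up to isomorphism of the underlying modules, are already covered, but $\frak b(M_i)$ itself) need not a priori be given by a clean closed formula and need not be independent of $\underline{x}$ in any stronger sense --- only the radical coincides with the canonically determined ideal $\sqrt{\frak a(M_i)}$ --- which is why the statement is phrased in terms of radicals.
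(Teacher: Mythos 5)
Your proof is correct and is essentially the argument the paper intends (the corollary is stated without proof, immediately after Theorem~\ref{D3.2.7}, and your route through it is the natural one). The key observations you make are exactly right: $\frak a(M_i)$ is a product of annihilators of the modules $H^j_{\frak m}(M_i)$ with $j<i$, Theorem~\ref{D3.2.7} makes those modules invariant up to isomorphism (hence their annihilators are literally the same ideals of $R$), and the radical identity $\sqrt{\frak a(M_i)}=\sqrt{\frak b(M_i)}$ drops out of Schenzel's sandwich $\frak a(M_i)\subseteq\frak b(M_i)\subseteq\bigcap_j\frak a_j(M_i)$ together with $\sqrt{\prod_j I_j}=\sqrt{\bigcap_j I_j}$. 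Your range check ($\{0,\ldots,i-1\}\subseteq\{j:j<i\}$) is the right thing to verify and does hold. One cosmetic note: the parenthetical in your last paragraph is garbled and should be trimmed before it goes into print, but the mathematical point it gestures at --- that the argument only pins down $\frak b(M_i)$ up to radical, not as an ideal --- is sound and worth keeping in a cleaner form.
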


We need the following result.
\begin{lemma}\label{B3.2.8}
Let $x $ be a $C$-parameter element of $M$. Then
$U_{M/xM}(0)$ is independent of the choice of $x$ (up to an isomorphism).
\end{lemma}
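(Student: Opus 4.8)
The plan is to prove the equivalent assertion that $U_{M/xM}(0)\cong U_{M/yM}(0)$ for any two $C$-parameter elements $x,y$ of $M$. The statement is trivial when $d\le 1$ (then $\dim(M/xM)\le 0$, so $U_{M/xM}(0)=0$), so I assume $d\ge 2$. The idea is to realize $U_{M/xM}(0)$ as a section $H^0_{\mathfrak c}(M/xM)$ for a suitably chosen ideal $\mathfrak c$, and then invoke the splitting Theorem \ref{D3.2.4} with the \emph{non-maximal} ideal $I=\mathfrak c$ in place of $\mathfrak m$.

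First I would record that for any finitely generated $R$-module $N$ one has $\dim R/\mathfrak b(N)\le\dim R/\mathfrak a(N)<\dim N$, by Remark \ref{C3.1.2}(i),(ii). Applied to $N=M/xM$ and $N=M/yM$, both of dimension $d-1$, this gives $\dim R/\mathfrak b(M/xM)\le d-2$ and $\dim R/\mathfrak b(M/yM)\le d-2$. I then set $\mathfrak c:=\mathfrak b(M/xM)\cap\mathfrak b(M/yM)$, so that $\dim R/\mathfrak c\le d-2$, $\mathfrak c\subseteq\mathfrak b(M/xM)$, $\mathfrak c\subseteq\mathfrak b(M/yM)$, and $\mathfrak c$ is symmetric in $x$ and $y$. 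The point is that $H^0_{\mathfrak c}(M/xM)=U_{M/xM}(0)$: on one hand $\mathfrak c\subseteq\mathfrak b(M/xM)\subseteq\Ann U_{M/xM}(0)$ by Remark \ref{C3.2.2}, so $U_{M/xM}(0)\subseteq H^0_{\mathfrak c}(M/xM)$; on the other hand $H^0_{\mathfrak c}(M/xM)$ is a submodule of $M/xM$ of dimension at most $\dim R/\mathfrak c\le d-2=\dim(M/xM)-1$, hence is contained in $U_{M/xM}(0)$.

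Now I would put $t:=d-\dim R/\mathfrak c\ge 2$ and $\overline M:=M/U_M(0)$, and apply Theorem \ref{D3.2.4} to the module $M$, the ideal $I=\mathfrak c$, and the $C$-parameter element $x\in\mathfrak b(M)^3$. For the index $i=0$, which satisfies $i<t-1$, part (i) guarantees that the short exact sequence $0\to H^0_{\mathfrak c}(M)\to H^0_{\mathfrak c}(M/xM)\to H^1_{\mathfrak c}(\overline M)\to 0$ is determined, and part (ii) guarantees that it splits. Together with the previous paragraph this yields $U_{M/xM}(0)\cong H^0_{\mathfrak c}(M)\oplus H^1_{\mathfrak c}(\overline M)$. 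Running the same argument with $y$ in place of $x$ --- the ideal $\mathfrak c$ and the module $\overline M=M/U_M(0)$ being literally the same --- gives $U_{M/yM}(0)\cong H^0_{\mathfrak c}(M)\oplus H^1_{\mathfrak c}(\overline M)$, and therefore $U_{M/xM}(0)\cong U_{M/yM}(0)$.

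The whole content of the argument lies in the choice of $\mathfrak c$, and this is where I expect the only real difficulty: the ideal must lie inside both $\mathfrak b(M/xM)$ and $\mathfrak b(M/yM)$ so that $H^0_{\mathfrak c}(-)$ still computes the unmixed component, it must satisfy $\dim R/\mathfrak c\le d-2$ so that the degree-zero splitting in Theorem \ref{D3.2.4} is available (which is precisely where the $C$-parameter hypothesis $x\in\mathfrak b(M)^3$ is consumed), and it must not depend on the individual parameter element. The intersection $\mathfrak b(M/xM)\cap\mathfrak b(M/yM)$ meets all three demands. As an alternative one could take $\mathfrak c:=\sqrt{\mathfrak a(M/xM)}$, which equals $\sqrt{\mathfrak b(M/xM)}$ and, by Corollary \ref{C invar ann}, is independent of the chosen $C$-parameter element; with this choice the single isomorphism $U_{M/xM}(0)\cong H^0_{\mathfrak c}(M)\oplus H^1_{\mathfrak c}(M/U_M(0))$ already displays the right-hand side as an invariant of $M$.
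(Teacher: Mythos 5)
Your proposal is correct, and it captures the paper's key mechanism: realizing $U_{M/xM}(0)$ as $H^0_{\mathfrak c}(M/xM)$ for an ideal $\mathfrak c$ with $\dim R/\mathfrak c\le d-2$, and then splitting via Theorem~\ref{D3.2.4}(ii) at the index $i=0<t-1$. The one place you diverge from the paper is the choice of $\mathfrak c$: the paper takes $\mathfrak b'=\sqrt{\mathfrak a(M/xM)}=\sqrt{\mathfrak b(M/xM)}$ and invokes Corollary~\ref{C invar ann} (hence, implicitly, all of Theorem~\ref{D3.2.7}) to know that $\mathfrak b'$ does not depend on the chosen $C$-parameter element, so that a single isomorphism $U_{M/xM}(0)\cong H^0_{\mathfrak b'}(M)\oplus H^1_{\mathfrak b'}(M/U_M(0))$ already exhibits an invariant of $M$. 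Your primary route instead takes $\mathfrak c=\mathfrak b(M/xM)\cap\mathfrak b(M/yM)$, which is symmetric in $x,y$ by construction; this sidesteps Corollary~\ref{C invar ann} entirely and makes the lemma logically independent of the invariance theorem for local cohomology of quotients, at the modest cost of producing a pairwise comparison rather than a single formula for $U_{M/xM}(0)$. Your verification that $\dim R/\mathfrak c\le d-2$, that $H^0_{\mathfrak c}(M/xM)=U_{M/xM}(0)$, and that $x\in\mathfrak b(M)^3$ feeds Theorem~\ref{D3.2.4}(i)--(ii) is all sound; and your closing paragraph correctly identifies the paper's own choice as the alternative. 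Both are valid proofs; the intersection variant is slightly more self-contained.
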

\begin{proof}
By Corollary \ref{C invar ann}, we have the ideal
$$\mathfrak{b}' = \sqrt{\mathfrak{a}(M/xM)} = \sqrt{\mathfrak{b}(M/xM)}$$
is independent of the choice of $C$-parameter element $x$. We have $U_{M/xM}(0) \cong
H^0_{\mathfrak{b}'}(M/xM)$ by Remark
\ref{C3.2.2} (ii). Since $\dim R/\mathfrak{b}' \leq \dim M/xM
-1 = d-2$, Theorem \ref{D3.2.4} (ii) implies that
$$H^0_{\mathfrak{b}'}(M/xM) \cong H^0_{\mathfrak{b}'}(M) \oplus H^1_{\mathfrak{b}'}(M/U_M(0)),$$
and the right hand side does not depend on $x$. Thus the unmixed component $U_{M/xM}(0)$ is independent of the choice of $C$-parameter element $x$
(up to an isomorphism).
\end{proof}

Using Lemma \ref{B3.2.8} and by the same method as used in the proof of Theorem
\ref{D3.2.7} we obtain the main result of this section.
\begin{theorem}\label{D3.2.9}
Let $M$ be a finitely generated $R$-module of dimension $d$ and $\underline{x} = x_1, \ldots, x_d$ a $C$-system of parameters of $M$. Then for all $1 \leq i \leq d$, the unmixed component $U_{M/(x_{i+1},
\ldots,x_d)M}(0)$ is independent of the choice of
$\underline{x}$ (up to an isomorphism).
\end{theorem}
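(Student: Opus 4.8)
The plan is to prove Theorem~\ref{D3.2.9} by induction on $d$, following the same strategy as the proof of Theorem~\ref{D3.2.7}, with Lemma~\ref{B3.2.8} serving as the base case of the "deepest" invariant. The case $d=1$ is trivial since there are no indices $i$ to consider beyond $U_M(0)$ itself, which needs no $C$-system of parameters. For $d>1$, fix two $C$-systems of parameters $\underline{x} = x_1,\ldots,x_d$ and $\underline{y}=y_1,\ldots,y_d$ of $M$, and set $M_i = M/(x_{i+1},\ldots,x_d)M$ and $M_i' = M/(y_{i+1},\ldots,y_d)M$. The goal is to show $U_{M_i}(0)\cong U_{M_i'}(0)$ for all $1\le i\le d$.

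For the top index $i=d$ we must compare $U_M(0)$ with $U_M(0)$, which is tautological, so the content starts at $i=d-1$: here $M_{d-1} = M/x_dM$ and $M_{d-1}' = M/y_dM$ with $x_d,y_d$ $C$-parameter elements, so Lemma~\ref{B3.2.8} gives $U_{M/x_dM}(0)\cong U_{M/y_dM}(0)$ directly. For $i<d-1$, I would first note that $\dim R/\frak b(M_{i+1}) < \dim M_{i+1} = i+1$ and likewise for $M_{i+1}'$, so by Corollary~\ref{C2.16} (prime avoidance) one can pick a single element $z$ that is simultaneously a $C$-parameter element of both $M_{i+1}$ and $M_{i+1}'$. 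Then $z,x_{i+2},\ldots,x_d$ and $z,y_{i+2},\ldots,y_d$ are parts of $C$-systems of parameters of $M$, and by Lemma~\ref{B3.1.10} both $x_{i+2},\ldots,x_d$ and $y_{i+2},\ldots,y_d$ are parts of $C$-systems of parameters of $M/zM$. Applying the inductive hypothesis to $M/zM$ (which has dimension $d-1$) yields
$$U_{M/(z,x_{i+2},\ldots,x_d)M}(0) \cong U_{M/(z,y_{i+2},\ldots,y_d)M}(0).$$

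It remains to link $U_{M_i}(0)$ with $U_{M/(z,x_{i+2},\ldots,x_d)M}(0)$. Since $x_{i+1}$ and $z$ are both $C$-parameter elements of $M_{i+1}$, Lemma~\ref{B3.2.8} applied to $M_{i+1}$ gives $U_{M_{i+1}/x_{i+1}M_{i+1}}(0) \cong U_{M_{i+1}/zM_{i+1}}(0)$, that is, $U_{M_i}(0)\cong U_{M/(z,x_{i+2},\ldots,x_d)M}(0)$; symmetrically $U_{M_i'}(0)\cong U_{M/(z,y_{i+2},\ldots,y_d)M}(0)$. Chaining these three isomorphisms gives $U_{M_i}(0)\cong U_{M_i'}(0)$, completing the induction. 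The main obstacle I anticipate is bookkeeping: one must be careful that all the parameter sequences really remain $C$-systems (or parts thereof) after the various quotients — this is exactly what Lemmas~\ref{B3.1.9} and~\ref{B3.1.10} are for — and that Lemma~\ref{B3.2.8}, which is stated for $M$, applies verbatim to the module $M_{i+1}$ in place of $M$, which it does since $M_{i+1}$ is again a finitely generated module over the (unchanged) ring $R$. There is no serious analytic difficulty beyond assembling these pieces in the right order.
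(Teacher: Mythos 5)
Your proof is correct and is essentially the paper's own argument: the paper's proof of Theorem~\ref{D3.2.9} consists of the single sentence ``Using Lemma~\ref{B3.2.8} and by the same method as used in the proof of Theorem~\ref{D3.2.7},'' and you have spelled out precisely that method — choosing a common $C$-parameter element $z$ of both $M_{i+1}$ and $M_{i+1}'$, using Lemma~\ref{B3.2.8} for the base case $i=d-1$ and for the two linking isomorphisms, and invoking the inductive hypothesis on $M/zM$ for the middle step — with the bookkeeping via Lemmas~\ref{B3.1.9} and~\ref{B3.1.10} handled correctly.
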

\begin{definition} \rm
For all $0 \leq i \leq d-1$ we denote by $U_i(M)$ the module satisfying that $U_i(M) \cong U_{M/(x_{i+2}, \ldots,x_d)M}(0)$
for all $C$-systems of parameters $x_1, \ldots, x_d$ of $M$.
Notice that $\dim U_i(M) \leq i$ for all $0 \leq i \leq d-1$, and $U_{d-1}(M) \cong U_M(0)$. We call the module sequence $U_0(M), \ldots, U_{d-1}(M)$ the {\it Cohen-Macaulay deviated sequence} of $M$. 
\end{definition}
Notice that the Cohen-Macaulay deviated sequence of $M$ is the zero sequence if and only if $M$ is Cohen-Macaulay. We next use the Cohen-Macaulay deviated sequence to prove some properties of $C$-systems of parameters.
\begin{corollary}\label{H3.2.11}
Let $\underline{x} = x_i, \ldots, x_d, i > 1$, be a part of $C$-system of parameters of $M$. Then $\mathfrak{b}(M/(x_i, \ldots,x_d)M) =
\mathfrak{b}(M/(x_i^{n_i}, \ldots,x_d^{n_d})M)$ where $n_j \geq 1$
for all $i \leq j  \leq d$.
\end{corollary}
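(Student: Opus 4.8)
The plan is to reduce everything, by changing the exponents one variable at a time, to the following key claim: \emph{for any finitely generated $R$-module $P$ and any $C$-parameter element $x$ of $P$ one has $\mathfrak{b}(P/xP)=\mathfrak{b}(P/x^{n}P)$ for all $n\ge 1$.} Granting this, I would prove the corollary by induction on the length $k=d-i+1$ of $\underline{x}$, uniformly over all finitely generated $R$-modules. The case $k=1$ is the key claim with $P=M$. For $k>1$, iterating Lemma \ref{B3.1.9} shows that $x_{i+1},\ldots,x_{d}$ is a part of a $C$-system of parameters of $M/x_{i}M$, so the induction hypothesis applied there gives $\mathfrak{b}\big(M/(x_{i},x_{i+1},\ldots,x_{d})M\big)=\mathfrak{b}\big(M/(x_{i},x_{i+1}^{n_{i+1}},\ldots,x_{d}^{n_{d}})M\big)=\mathfrak{b}(P/x_{i}P)$, where $P:=M/(x_{i+1}^{n_{i+1}},\ldots,x_{d}^{n_{d}})M$; and the induction hypothesis applied to the part $x_{i+1},\ldots,x_{d}$ of a $C$-system of $M$ itself gives $\mathfrak{b}\big(M/(x_{i+1},\ldots,x_{d})M\big)=\mathfrak{b}(P)$. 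Since $x_{i}\in\mathfrak{b}\big(M/(x_{i+1},\ldots,x_{d})M\big)^{3}=\mathfrak{b}(P)^{3}$ and $x_{i}$ is clearly a parameter element of $P$, the element $x_{i}$ is a $C$-parameter element of $P$, so the key claim yields $\mathfrak{b}(P/x_{i}P)=\mathfrak{b}(P/x_{i}^{n_{i}}P)=\mathfrak{b}\big(M/(x_{i}^{n_{i}},\ldots,x_{d}^{n_{d}})M\big)$; chaining these equalities finishes the induction.

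To prove the key claim I may assume $\dim P\ge 2$, the other cases being trivial. By Remark \ref{C3.2.2}(iii), $\mathfrak{b}(P/xP)$ equals the intersection of the ideals $\mathrm{Ann}\,U_{(P/xP)/\underline{w}(P/xP)}(0)$ taken over all partial systems of parameters $\underline{w}$ of $P/xP$ of length at most $\dim(P/xP)-1$, and the same description holds for $P/x^{n}P$. Since $(x)$ and $(x^{n})$ have the same radical, $P/xP$ and $P/x^{n}P$ have the same support, hence the same partial systems of parameters, so the two intersections are indexed by the same set. It therefore suffices to fix such a $\underline{w}$ and prove $\mathrm{Ann}\,U_{Q/xQ}(0)=\mathrm{Ann}\,U_{Q/x^{n}Q}(0)$, where $Q:=P/\underline{w}P$; here I use $(P/xP)/\underline{w}(P/xP)=Q/xQ$ and likewise with $x^{n}$.

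The crux is that $x$ and $x^{n}$ are again $C$-parameter elements of $Q$. A short dimension count, using $\dim P/yP\ge\dim P-1$ for every $y$, shows that a partial system of parameters $\underline{w}$ of $P/xP$ is automatically a partial system of parameters of $P$ and that $x$ is a parameter element of $Q=P/\underline{w}P$. Iterating Lemma \ref{B3.1.9} along $\underline{w}$ then gives $\mathfrak{b}(P)\subseteq\mathfrak{b}(Q)$, so $x\in\mathfrak{b}(P)^{3}\subseteq\mathfrak{b}(Q)^{3}$, and likewise $x^{n}\in\mathfrak{b}(Q)^{3n}\subseteq\mathfrak{b}(Q)^{3}$; hence both $x$ and $x^{n}$ are $C$-parameter elements of $Q$. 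Now Lemma \ref{B3.2.8} applied to $Q$ gives $U_{Q/xQ}(0)\cong U_{Q/x^{n}Q}(0)$, and isomorphic modules have equal annihilators. This proves the key claim, and with it the corollary.

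I expect the main obstacle to be precisely this last point: verifying that after killing an \emph{arbitrary} part $\underline{w}$ of a system of parameters of $P/xP$ the element $x$ still is a $C$-parameter element of the quotient $P/\underline{w}P$. This is where one genuinely combines the elementary inequality $\dim P/yP\ge\dim P-1$ with the monotonicity Lemma \ref{B3.1.9}. Everything else is formal once Lemma \ref{B3.2.8} is in hand: the reduction to one variable at a time, the identification $(P/xP)/\underline{w}(P/xP)=(P/\underline{w}P)/x(P/\underline{w}P)$, and the passage from unmixed components to their annihilators.
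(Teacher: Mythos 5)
Your proof is correct and follows essentially the same line of reasoning as the paper's: the crux is the single-variable case $\mathfrak{b}(P/xP)=\mathfrak{b}(P/x^{n}P)$, which you prove, as the paper does, by combining Remark \ref{C3.2.2}(iii) with Lemma \ref{B3.1.9} to see that $x$ and $x^{n}$ remain $C$-parameter elements after passing to $P/\underline{w}P$, and then invoking the invariance of the unmixed component (Lemma \ref{B3.2.8} / Theorem \ref{D3.2.9}). Your induction is on the length $k=d-i+1$ of $\underline{x}$ rather than on $\dim M$, and you raise the exponents of $x_{i+1},\ldots,x_d$ before that of $x_i$ whereas the paper does it the other way around, but these are cosmetic reorganizations of the same argument (you could also cite Lemma \ref{B3.1.10} instead of iterating Lemma \ref{B3.1.9} when passing parts of $C$-systems across quotients).
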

\begin{proof}
For $i = d$, notice that $\underline{y} = y_1,\ldots,y_{d-1}$ is a system of parameters of $M/x_dM$ if and only if it is also a system of parameters of
$M/x_d^{n_d}M$ for all $n_d \geq 1$. By Lemma \ref{B3.1.9} we have $x_d$
and hence $x_d^{n_d}$ are contained in $ \frak b(M/(y_1,\ldots,y_{j-1})M)^3$ for all $1 \leq j \leq d-1$. So Theorem \ref{D3.2.9} claims that
$$U_{M/(y_1,\ldots,y_{j-1},x_d)M}(0)  \cong
U_{M/(y_1,\ldots,y_{j-1},x_d^{n_d})M}(0)$$
for all $1 \leq j \leq d-1$.
By Remark \ref{C3.2.2} (iii) we have
\begin{eqnarray*}
\mathfrak{b}(M/x_dM) &=& \bigcap_{\underline{y}, j=1}^{d-1}
\mathrm{Ann}\,U_{M/(y_1,\ldots,y_{j-1},x_d)M}(0)\\
&=& \bigcap_{\underline{y}, j=1}^{d-1}
\mathrm{Ann}\,U_{M/(y_1,\ldots,y_{j-1},x_d^{n_d})M}(0)\\
 &=&\mathfrak{b}(M/x_d^{n_d}M),
\end{eqnarray*}
where $\underline{y} = y_1,\ldots,y_{d-1}$ runs over all systems of parameters of $M/x_dM$.\\
We now proceed by induction on $d$ that $\mathfrak{b}(M/(x_i, \ldots,x_d)M) =
\mathfrak{b}(M/(x_i^{n_i}, \ldots,x_d^{n_d})M)$. The case $d=2$
follows from the above fact since $i = 2$. Suppose $d \geq 3$ and
$i<d$. Applying the inductive hypothesis for $M/(x_{i+1},\ldots,x_d)M$ we have
$$\frak
b(M/(x_i,x_{i+1},\ldots,x_d)M) = \frak
b(M/(x_i^{n_i},x_{i+1},\ldots,x_d)M)$$ for all $n_i \geq 1$. By Lemma \ref{B3.1.10} we have $x_{i+1},\ldots,x_d$ is a part of $C$-system of parameters of $M/x_i^{n_1}M$. By using the inductive hypothesis for $M/x_i^{n_i}M$ we obtain
$$\frak b(M/(x_i^{n_i},x_{i+1},\ldots,x_d)M) =
\frak b(M/(x_i^{n_i},x_{i+1}^{n_{i+1}}\ldots,x_d^{n_d})M)$$ for all
$n_{i+1},\ldots,n_{d} \geq 1$. The proof is complete.
\end{proof}
\begin{corollary} \label{H3.2.12}
Let $\underline{x} = x_1, \ldots, x_d$ be a $C$-system of parameters of $M$. Then for all $d$-tuples of positive integers $\underline{n} =(n_1,\ldots,n_d)$ we have
$x_{1}^{n_1},
\ldots,x_d^{n_d}$ is also a $C$-system of parameters.
\end{corollary}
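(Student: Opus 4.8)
The plan is to verify the two defining conditions of a $C$-system of parameters for the sequence $x_1^{n_1}, \ldots, x_d^{n_d}$ directly, the point being that Corollary \ref{H3.2.11} lets us replace $\mathfrak{b}(M/(x_{i+1}, \ldots, x_d)M)$ by $\mathfrak{b}(M/(x_{i+1}^{n_{i+1}}, \ldots, x_d^{n_d})M)$ throughout.

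First I would record the routine points. The ideal $(x_1^{n_1}, \ldots, x_d^{n_d})$ has the same radical in $R$ as $(x_1, \ldots, x_d)$, so $(x_1^{n_1}, \ldots, x_d^{n_d}) + \Ann M$ is $\mathfrak{m}$-primary and hence $x_1^{n_1}, \ldots, x_d^{n_d}$ is again a system of parameters of $M$. Moreover $\mathfrak{b}(M)^3$ is an ideal, so from $x_d \in \mathfrak{b}(M)^3$ and $n_d \geq 1$ we obtain $x_d^{n_d} = x_d \cdot x_d^{n_d-1} \in \mathfrak{b}(M)^3$, which is the first of the defining conditions.

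It remains to check, for each $i$ with $1 \leq i \leq d-1$, that $x_i^{n_i} \in \mathfrak{b}\big(M/(x_{i+1}^{n_{i+1}}, \ldots, x_d^{n_d})M\big)^3$. Here I would apply Corollary \ref{H3.2.11} to the tail $x_{i+1}, \ldots, x_d$: it is a part of a $C$-system of parameters of $M$ since it extends to the given $C$-system $x_1, \ldots, x_d$, and its leading index satisfies $i+1 \geq 2 > 1$, as that corollary requires. It follows that
$$\mathfrak{b}\big(M/(x_{i+1}, \ldots, x_d)M\big) = \mathfrak{b}\big(M/(x_{i+1}^{n_{i+1}}, \ldots, x_d^{n_d})M\big).$$
By the defining property of the $C$-system $x_1, \ldots, x_d$ we have $x_i \in \mathfrak{b}(M/(x_{i+1}, \ldots, x_d)M)^3$, hence $x_i \in \mathfrak{b}(M/(x_{i+1}^{n_{i+1}}, \ldots, x_d^{n_d})M)^3$, and since this is an ideal and $n_i \geq 1$, also $x_i^{n_i}$ lies in it. (For $d = 1$ the range of $i$ is empty, so only the first condition, already verified, is relevant.)

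The genuine content of this corollary is thus entirely absorbed into Corollary \ref{H3.2.11} — whose proof already carries the induction on $d$ and the appeal to the invariance Theorem \ref{D3.2.9} — so the argument here amounts to unwinding definitions. The single point that warrants a moment's care is the index bookkeeping: confirming that each tail $x_{i+1}, \ldots, x_d$ genuinely qualifies as a part of a $C$-system of parameters and that its leading index exceeds $1$, so that Corollary \ref{H3.2.11} is legitimately applicable at every stage.
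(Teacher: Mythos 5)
Your proof is correct and takes essentially the same route as the paper: the paper's own proof is the one-liner ``follows immediately from Corollary \ref{H3.2.11} and the definition of $C$-system of parameters,'' and your argument is exactly the careful unwinding of that claim, applying Corollary \ref{H3.2.11} to each tail $x_{i+1},\ldots,x_d$ (leading index $\geq 2$, so the hypothesis $i>1$ is satisfied) and then using that powers of an element of an ideal stay in that ideal.
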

\begin{proof} The assertion follows immediately from Corollary \ref{H3.2.11} and the definition of $C$-system of parameters.
\end{proof}
\noindent {\bf An application to $dd$-sequences.} We use the Cohen-Macaulay deviated sequence to compute the function $I_{M,\underline{x}}(\underline{n})$.
\begin{proposition} \label{M3.2.13}
Let $\underline{x} = x_1, \ldots, x_d$ be a $C$-system of parameters of $M$. Let $\{U_i(M)\}_{i = 0}^{d-1}$  be the Cohen-Macaulay deviated sequence of $M$. Then the function
$$I_{M,\underline{x}}(\underline{n}) =
\ell(M/(x_1^{n_1},\ldots,x_d^{n_d})M) - n_1\ldots n_d e(x_1,\ldots,x_d;M)$$ is a
polynomial in $\underline{n} = n_1,\ldots,n_d$. More precisely, the equality
$$I_{M,\underline{x}}(\underline{n}) = \sum_{i=0}^{p(M)}n_1\ldots n_i e(x_1,\ldots,x_i;U_i(M))$$
holds for all $n_i \geq 1$, where $p(M)$ is the polynomial type of $M$. In particular,
$\underline{x} = x_1, \ldots, x_d$ is a $dd$-sequence system of parameters.
\end{proposition}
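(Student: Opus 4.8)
The plan is to prove, by induction on $d$, the sharper identity
\[
\ell\bigl(M/(x_1^{n_1},\ldots,x_d^{n_d})M\bigr)=n_1\cdots n_d\,e(x_1,\ldots,x_d;M)+\sum_{i=0}^{d-1}n_1\cdots n_i\,e(x_1,\ldots,x_i;U_i(M))
\]
for every $C$-system of parameters $x_1,\ldots,x_d$ of $M$ and all $n_i\ge 1$, where the $i=0$ summand is read as $\ell(U_0(M))$; subtracting $n_1\cdots n_d\,e(x_1,\ldots,x_d;M)$ then yields the stated formula for $I_{M,\underline{x}}(\underline{n})$ once the cut-off $p(M)$ is justified (final paragraph). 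A preliminary remark is that each symbol $e(x_1,\ldots,x_i;U_i(M))$ is meaningful: $\dim U_i(M)\le i$, and when $\dim U_i(M)=i$ the elements $x_1,\ldots,x_i$ form a system of parameters of $U_i(M)$ — indeed $U_i(M)=0:_{M/(x_{i+2},\ldots,x_d)M}x_{i+1}$ by Remark \ref{C3.2.2}(ii), and a prime $\mathfrak p\ne\mathfrak m$ lying in $\Supp(M/(x_{i+2},\ldots,x_d)M)$ and containing $x_1,\ldots,x_i$ would contain all of $x_1,\ldots,x_d$ except possibly $x_{i+1}$, hence must avoid $x_{i+1}$, forcing $(U_i(M))_{\mathfrak p}=0$; when $\dim U_i(M)<i$ the symbol vanishes by Serre's vanishing theorem.

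For the base case $d=1$ we have $x_1\in\mathfrak b(M)$, so $0:_M x_1^{n_1}=U_M(0)=U_0(M)$ and $x_1U_0(M)=0$ by Remark \ref{C3.2.2}(ii). Since $\overline{M}=M/U_0(M)$ is unmixed of dimension $1$, the element $x_1^{n_1}$ is $\overline{M}$-regular, so tensoring $0\to U_0(M)\to M\to\overline{M}\to 0$ with $R/(x_1^{n_1})$ kills the $\Tor_1$-term and, using $x_1U_0(M)=0$, gives $0\to U_0(M)\to M/x_1^{n_1}M\to\overline{M}/x_1^{n_1}\overline{M}\to 0$. Hence $\ell(M/x_1^{n_1}M)=\ell(U_0(M))+n_1\,e(x_1;\overline{M})=\ell(U_0(M))+n_1\,e(x_1;M)$, which is the desired identity.

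For $d>1$, fix $\underline{n}$ and set $N=M/x_d^{n_d}M$. Using Corollary \ref{H3.2.11} to compare $\mathfrak b$ of the quotients of $M$ formed from $x_d$ and from $x_d^{n_d}$, one checks that $x_1,\ldots,x_{d-1}$ is a $C$-system of parameters of $N$; by Corollary \ref{H3.2.12}, $x_1,\ldots,x_{d-1},x_d^{n_d}$ is a $C$-system of parameters of $M$, whence $U_i(N)\cong U_{M/(x_{i+2},\ldots,x_{d-1},x_d^{n_d})M}(0)\cong U_i(M)$ for $0\le i\le d-2$ by Theorem \ref{D3.2.9}. Apply the inductive hypothesis to $N$ and then substitute: first, $\ell(M/(x_1^{n_1},\ldots,x_d^{n_d})M)=\ell(N/(x_1^{n_1},\ldots,x_{d-1}^{n_{d-1}})N)$; second, by the standard reduction formula for the multiplicity symbol together with $0:_M x_d^{n_d}=U_M(0)\cong U_{d-1}(M)$ (Remark \ref{C3.2.2}(ii), as $x_d^{n_d}\in\mathfrak b(M)$ is a parameter element), $e(x_1,\ldots,x_{d-1};N)=n_d\,e(x_1,\ldots,x_d;M)+e(x_1,\ldots,x_{d-1};U_{d-1}(M))$. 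Collecting terms produces the displayed identity for $M$. I expect this last bookkeeping to be the main obstacle: the naive reduction $e(x_1,\ldots,x_{d-1};N)=n_d\,e(x_1,\ldots,x_d;M)$ is wrong, and it is exactly the correction term $e(x_1,\ldots,x_{d-1};U_{d-1}(M))$, forced by the nonvanishing of $0:_M x_d^{n_d}=U_M(0)$, that restores the top summand $i=d-1$ of the claimed formula.

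Finally, put $p'=\max\{\,i:\dim U_i(M)=i\,\}$, with the convention that $p'$ (and both sides of the statement) is absent precisely when $M$ is Cohen--Macaulay. The right-hand side of the displayed identity is then a polynomial in $\underline{n}$ of total degree $p'$, the summands with $\dim U_i(M)<i$ being zero; since $p(M)$ is by definition the least degree of a polynomial bounding $I_{M,\underline{x}}(\underline{n})$ above and the coefficient $e(x_1,\ldots,x_{p'};U_{p'}(M))$ is positive, this forces $p(M)=p'$. Hence the summands with index $>p(M)$ vanish and $I_{M,\underline{x}}(\underline{n})=\sum_{i=0}^{p(M)}n_1\cdots n_i\,e(x_1,\ldots,x_i;U_i(M))$; and since $U_i(M)=U_{M/(x_{i+2},\ldots,x_d)M}(0)=0:_{M/(x_{i+2},\ldots,x_d)M}x_{i+1}$ by Remark \ref{C3.2.2}(ii) (with $x_{i+1}\in\mathfrak b(M/(x_{i+2},\ldots,x_d)M)^3$ a parameter element), this is exactly the expression appearing in Proposition \ref{M3.1.7}, so $\underline{x}$ is a $dd$-sequence, which gives the last assertion.
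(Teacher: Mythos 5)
Your proof is correct, but it takes a genuinely different route from the paper's.  The paper proves the full identity
$I_{M,\underline{x}}(\underline{n})=\sum_{i=0}^{d-1}n_1\cdots n_i\,e(x_1,\ldots,x_i;U_i(M))$
in a single stroke by applying the Auslander--Buchsbaum length--multiplicity formula \cite[Corollary 4.3]{AB58} to the $C$-system of parameters $x_1^{n_1},\ldots,x_d^{n_d}$, after identifying each colon module $((x_{i+2}^{n_{i+2}},\ldots,x_d^{n_d})M:x_{i+1}^{n_{i+1}})/(x_{i+2}^{n_{i+2}},\ldots,x_d^{n_d})M$ with $U_i(M)$ via Theorem \ref{D3.2.9} and Remark \ref{C3.2.2}(ii).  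You instead prove this identity by induction on $d$, using the multiplicity reduction formula $e(x_1,\ldots,x_{d-1};M/x_d^{n_d}M)=e(x_1,\ldots,x_{d-1},x_d^{n_d};M)+e(x_1,\ldots,x_{d-1};0:_Mx_d^{n_d})$ together with Corollaries \ref{H3.2.11}, \ref{H3.2.12} and Theorem \ref{D3.2.9} to keep track of the $U_i$ under the quotient by $x_d^{n_d}$; this is in effect a self-contained re-derivation of the Auslander--Buchsbaum formula in the present setting, so it buys independence from that citation at the cost of length.  For the cut-off at $p(M)$ the two proofs also diverge: the paper observes $\mathfrak b(M)\subseteq\operatorname{Ann}U_i(M)$, hence $\dim U_i(M)\le\dim R/\mathfrak b(M)=p(M)$, so the summands with $i>p(M)$ vanish; you instead set $p'=\max\{i:\dim U_i(M)=i\}$ and deduce $p(M)=p'$ from the polynomial characterization of $p(M)$.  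Your argument is sound, but the claim that $p'$ is defined precisely when $M$ is not Cohen--Macaulay is asserted as a ``convention'' rather than justified; it does follow (if $\dim U_i(M)<i$ for all $i$ then the identity gives $\ell(M/\underline{x}M)=e(\underline{x};M)$, forcing $M$ Cohen--Macaulay), and it would be worth spelling this out, though the paper's annihilator argument is shorter and avoids the issue.
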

\begin{proof}
For all $d$-tuples of positive integers $\underline{n} = (n_1,\ldots,n_d)$ by
Corollary \ref{H3.2.12} we have $x_{1}^{n_1},
\ldots,x_d^{n_d}$ is a $C$-system of parameters. By Theorem \ref{D3.2.9} and Remark \ref{C3.2.2} (ii) we have
$${(x_{i+2}^{n_{i+2}},\ldots,x_d^{n_{d}})M:_M x_{i+1}^{n_{i+1}}}/{(x_{i+2}^{n_{i+2}},\ldots,x_d^{n_{d}})M} \cong U_i(M)$$
for all $0 \leq i \leq d-1$.
By the Auslander-Buchsbaum formula (cf. \cite[Corollary 4.3]{AB58}) we have
\begin{eqnarray*}
I_{M,\underline{x}}(\underline{n}) &=&
\sum_{i=0}^{d-1}e(x_{1}^{n_{1}},
\ldots,x_i^{n_i};{(x_{i+2}^{n_{i+2}},\ldots,x_d^{n_{d}})M:_M x_{i+1}^{n_{i+1}}}/{(x_{i+2}^{n_{i+2}},\ldots,x_d^{n_{d}})M})\\
&=& \sum_{i=0}^{d-1}e(x_{1}^{n_{1}},
\ldots,x_i^{n_i};U_i(M))\\
&=& \sum_{i=0}^{d-1}n_1\ldots n_i e(x_1,\ldots,x_i;U_i(M))
\end{eqnarray*}
is a polynomial in $n_1,\ldots,n_d$. By Remark \ref{C3.2.2}
(iii) we have $\mathrm{Ann}U_i(M) \supseteq \frak b(M)$ for all $i
\leq d-1$. Thus $\dim U_i \leq p(M)$ for all $i \leq d-1$ since $\dim
R/\frak b(M) = \dim R/\frak a(M) = p(M)$. Therefore $e(x_1,\ldots,x_i;U_i(M)) = 0$ for all $p(M) < i \leq d-1$. Hence, the equality 
$$I_{M,\underline{x}}(\underline{n}) = \sum_{i=0}^{p(M)}n_1\ldots n_i e(x_1,\ldots,x_i;U_i(M))$$
holds. The last assertion follows from Proposition \ref{M3.1.7}. The proof is complete.
\end{proof}
The following is a generalization of Proposition \ref{M3.1.7} (see also \cite[Theorem 3.7]{CN}).
\begin{corollary}
  Let $\underline{x} = x_1, \ldots, x_d$ be a $dd$-sequence system of parameters of $M$. Let $U_i(M)$, $0 \leq i \leq d-1$,  be the Cohen-Macaulay deviated sequence of $M$. Then the difference
$$I_{M,\underline{x}}(\underline{n}) = \sum_{i=0}^{p(M)}n_1\ldots n_i e(x_1,\ldots,x_i;U_i(M))$$
for all $n_i \geq 1$, where $p(M)$ is the polynomial type of $M$.
\end{corollary}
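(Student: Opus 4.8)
The plan is to reduce the claim to the case of a $C$-system of parameters, which is already settled in Proposition \ref{M3.2.13}, by replacing $\underline{x}$ with a suitable power of itself and then comparing the two polynomial expressions for $I_{M,\underline{x}}$.

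The first step is the observation that, if $\underline{x} = x_1, \ldots, x_d$ is a $dd$-sequence system of parameters of $M$, then $\underline{y} = x_1^{3d}, \ldots, x_d^{3d}$ is a $C$-system of parameters of $M$. Indeed, fix $1 \le i \le d$. Since $3d \ge j$ for every $j \le d$ and $d \ge i$, Remark \ref{R dd seq} (i) applied to the system of parameters $x_1^{3d}, \ldots, x_{i-1}^{3d}, x_i^{d}, x_{i+1}^{3d}, \ldots, x_d^{3d}$ (whose $j$-th exponent is at least $j$ in every case) shows this system is $p$-standard, so $x_i^{d} \in \frak a(M/(x_{i+1}^{3d}, \ldots, x_d^{3d})M)$ (and $x_d^{d} \in \frak a(M)$ when $i = d$). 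Cubing and using the inclusion $\frak a(N) \subseteq \frak b(N)$ from Remark \ref{C3.1.2} (i) gives $x_i^{3d} = (x_i^{d})^3 \in \frak b(M/(x_{i+1}^{3d}, \ldots, x_d^{3d})M)^3$, which is exactly the condition in the definition of a $C$-system of parameters.

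The second step is the comparison. On one hand, since $\underline{x}$ is a $dd$-sequence, Proposition \ref{M3.1.7} gives
$$I_{M,\underline{x}}(\underline{n}) = \sum_{i=0}^{p(M)} n_1 \cdots n_i\, e_i, \qquad e_i = e\bigl(x_1, \ldots, x_i;\, 0:_{M/(x_{i+2}, \ldots, x_d)M}x_{i+1}\bigr),$$
with $e_0 = \ell(0:_{M/(x_2,\ldots,x_d)M}x_1)$; in particular $I_{M,\underline{x}}$ is a polynomial whose monomials are $1, n_1, n_1n_2, \ldots, n_1\cdots n_{p(M)}$. On the other hand, applying Proposition \ref{M3.2.13} to the $C$-system $\underline{y} = x_1^{3d}, \ldots, x_d^{3d}$ and using $I_{M,\underline{y}}(\underline{n}) = I_{M,\underline{x}}(3dn_1, \ldots, 3dn_d)$ together with $e(x_1^{3d}, \ldots, x_i^{3d}; U_i(M)) = (3d)^i\, e(x_1, \ldots, x_i; U_i(M))$, we obtain
$$\sum_{i=0}^{p(M)} (3d)^i n_1\cdots n_i\, e_i = I_{M,\underline{x}}(3dn_1, \ldots, 3dn_d) = \sum_{i=0}^{p(M)} (3d)^i n_1\cdots n_i\, e(x_1, \ldots, x_i; U_i(M))$$
for all $n_1, \ldots, n_d > 0$. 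Since the monomials $1, n_1, \ldots, n_1\cdots n_{p(M)}$ are linearly independent as functions on $\mathbb{N}_{>0}^d$, equating coefficients yields $e_i = e(x_1, \ldots, x_i; U_i(M))$ for all $i \le p(M)$; feeding this back into the displayed formula for $I_{M,\underline{x}}(\underline{n})$ finishes the proof.

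I do not expect a serious obstacle here: the only delicate point is the first step, i.e.\ arranging that a cube of elements of $\frak a(\cdot)$ lies in $\frak b(\cdot)^3$ so that an explicit power of the $dd$-sequence is literally a $C$-system; once that is in place, the rest is bookkeeping with multiplicities and the linear independence of the monomials $n_1\cdots n_i$. The degenerate term $i = 0$, where the monomial is the constant $1$, is covered by the same comparison and amounts to the identity $\ell(0:_{M/(x_2,\ldots,x_d)M}x_1) = \ell(U_0(M))$.
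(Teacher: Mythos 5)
Your proof is correct and follows essentially the same route as the paper: pass to a suitable power $\underline{x}^k$ which is a $C$-system of parameters, use $I_{M,\underline{x}^k}(\underline{n}) = I_{M,\underline{x}}(kn_1,\ldots,kn_d)$ together with Propositions \ref{M3.2.13} and \ref{M3.1.7}, and equate coefficients. The one place where you add genuine value is the first step: the paper simply asserts that some power of a $dd$-sequence is a $C$-system of parameters and waves at Remark \ref{R dd seq}, whereas that remark only produces a $p$-standard system and membership in $\mathfrak{a}(\cdot)$ does not directly give membership in $\mathfrak{b}(\cdot)^3$; your choice $k=3d$ together with the observation that $x_i^{3d}=(x_i^d)^3$ with $x_i^d \in \mathfrak{a}(\cdot) \subseteq \mathfrak{b}(\cdot)$ closes that gap cleanly.
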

\begin{proof} Notice that if $\underline{x} = x_1, \ldots, x_d$ is a $dd$-sequence system of parameters of $M$, then $x_d^s$ is a $p$-standard parameter element for some $s \ge d$ (see Remark \ref{R dd seq}). So $x_d^s \in \mathfrak{a}(M)$, and hence $x_d^{k} \in \mathfrak{a}(M)^3 \subseteq \mathfrak{b}(M)^3$ for some $k \ge 3d$. Therefore, $x_d^k$ is a $C$-parameter element of $M$.
	Note that $x_1, \ldots, x_{d-1}$ is a $dd$-sequence of $M / x_d^k M$, so $x_{d-1}^k$ is a $C$-parameter element of $M / x_d^k M$ for all $k \ge 3d > 3(d-1)$.
	Continuing this process, we obtain that $x_1^k, \ldots, x_d^k$ form a $C$-system of parameters of $M$. So we have
$$I_{M,\underline{x}^k}(\underline{n}) = \sum_{i=0}^{p(M)}k^in_1\ldots n_i e(x_1,\ldots,x_i;U_i(M))$$
for all $n_i \geq 1$. By Proposition \ref{M3.1.7} we have
$$I_{M,\underline{x}}(kn_1, \ldots, kn_d) =
\sum_{i=0}^{p(M)} k^in_1\ldots n_i e(x_1,\ldots,x_i; 0:_{M/(x_{i+2},\ldots,x_d)M}x_{i+1})$$
for all $n_i \geq 1$. However it is clear that $I_{M,\underline{x}^k}(\underline{n}) = I_{M,\underline{x}}(kn_1, \ldots, kn_d)$. By the above equality we have
$$e(x_1,\ldots,x_i;U_i(M)) = e(x_1,\ldots,x_i; 0:_{M/(x_{i+2},\ldots,x_d)M}x_{i+1})$$
for all $i \le p(M)$. Therefore
$$I_{M,\underline{x}}(\underline{n}) = \sum_{i=0}^{p(M)}n_1\ldots n_i e(x_1,\ldots,x_i;U_i(M))$$
for all $n_i \geq 1$ by Proposition \ref{M3.1.7}. The proof is complete.
\end{proof}

\noindent{\bf Sequentially Cohen-Macaulay modules.} We give an application of the Cohen-Macaulay deviate sequence to characterize {\it sequentially Cohen-Macaulay} modules. This notion is firstly introduced by Stanley in the graded rings \cite{St96}, and for modules over local rings by Schenzel in \cite{Sch98}, and by Nhan and the first author in \cite{CN03}.
\begin{remark}[\cite{CC07-2}] \rm
\begin{enumerate}[{(i)}]
\item The filtration of submodules $\mathcal{D}: D_0 \subset D_1 \subset \cdots \subset D_t =M$ of $M$ is called {\it the dimension filtration} if $D_i = U_{D_{i+1}}(0)$ for all $i \leq t-1$.
\item We call $M$ {\it sequentially Cohen-Macaulay} module if $D_{i+1}/D_i$ is Cohen-Macaulay for all $i \leq t-1$.
\item A system of parameters $\underline{x} =
x_1,\ldots,x_d$ of $M$ is called {\it good} if $D_i \cap (x_{d_i+1},\ldots,x_d)M = 0$ for
$i= 0, 1, \ldots, t-1$, where $d_i = \dim D_i$ for all $i \leq t$. Notice that every $dd$-sequence system of parameters is good.
\end{enumerate}
\end{remark}

\begin{remark}\label{Q3.2.14} \rm Let $M$ be a finitely generated $R$-module of dimension $d$ with the dimension filtration
 $$\mathcal{D}: D_0 \subset D_1 \subset \cdots \subset D_t =M,$$
with $d_i = \dim D_i $ for all $i \leq t$. Let $\underline{x} = x_1,
\ldots, x_d$ be a $C$-system of parameters of $M$. For each
$i<t$ and $d_i \leq j \leq d-1$ we have
$$D_i \cap (x_{j+2},\ldots,x_d)M = 0.$$
Therefore we can identify $D_i$ with a submodule of
$M/(x_{j+2},\ldots,x_d)M$. Moreover, since $\dim D_i = d_i < j+1 = \dim
M/(x_{j+2},\ldots,x_d)M$, $D_i$ is isomorphic to a submodule of
$U_j(M)$ for all $d_i \leq j \leq d-1$. So without any confusion we write $D_i \subseteq U_j(M)$ for all $d_i \leq j \leq d-1$.
\end{remark}
The following is a characterization of sequentially Cohen-Macaulay modules.
\begin{proposition}\label{M3.2.15} Let $M$ be a finitely generated $R$-module of dimension $d$ with the dimension filtration
 $$\mathcal{D}: D_0 \subset D_1 \subset \cdots \subset D_t =M,$$
with $d_i = \dim D_i $ for all $i \leq t$. Let $U_i(M)$, $0 \leq i \leq d-1$, be the Cohen-Macaulay deviated sequence of $M$. The following statements are equivalent
\begin{enumerate}[{(i)}]\rm
\item {\it $M$ is a sequentially Cohen-Macaulay module.}
\item {\it $D_i = U_j(M)$ for all $i<t$ and for all $d_i \leq j <d_{i+1}$.}
\end{enumerate}
\end{proposition}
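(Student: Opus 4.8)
The plan is to rephrase condition (ii) as a chain of unmixedness statements about parameter reductions of the quotients $M/D_i$, and then to run an induction on $d$ whose inductive step is powered by the splitting Corollary~\ref{H3.2.5}. First fix a $C$-system of parameters $\underline{x}=x_1,\dots,x_d$ of $M$, set $M_j=M/(x_{j+2},\dots,x_d)M$ for $j<d$, so that $\dim M_j=j+1$ and $U_j(M)\cong U_{M_j}(0)$, and put $d_t=d$. Since $\underline{x}$ is a $dd$-sequence (Proposition~\ref{M3.2.13}) it is good, so $D_i\cap(x_{j+2},\dots,x_d)M=0$ whenever $j\ge d_i$; hence the image $\overline{D_i}$ of $D_i$ in $M_j$ is isomorphic to $D_i$ and, by Remark~\ref{Q3.2.14}, lies inside $U_{M_j}(0)$. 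Because $\dim\overline{D_i}=d_i\le j$, a submodule of $M_j/\overline{D_i}$ has dimension $<j+1$ precisely when its preimage in $M_j$ does, so $U_{M_j}(0)/\overline{D_i}=U_{M_j/\overline{D_i}}(0)$, while $M_j/\overline{D_i}\cong (M/D_i)/(x_{j+2},\dots,x_d)(M/D_i)$. Thus, for $d_i\le j<d_{i+1}$,
\begin{equation*}
D_i=U_j(M)\quad\Longleftrightarrow\quad (M/D_i)\big/(x_{j+2},\dots,x_d)(M/D_i)\ \text{is unmixed.}\tag{$\ast$}
\end{equation*}
This is the form of (ii) I would use.

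Next I would argue by induction on $d$; the case $d\le 1$ is immediate, since there the only graded piece $M/D_{t-1}$ has dimension $\le 1$ and depth $\ge 1$, hence is Cohen-Macaulay, and the only relevant $U_j(M)$ is $U_0(M)=U_M(0)=D_{t-1}$. For $d>1$, pass to $\overline{M}=M/x_dM$, of dimension $d-1$, which by Lemma~\ref{B3.1.10} inherits $x_1,\dots,x_{d-1}$ as a $C$-system of parameters. Corollary~\ref{H3.2.5} supplies, for $\ell<d-1$, the splitting $H^\ell_{\mathfrak m}(\overline{M})\cong H^\ell_{\mathfrak m}(M)\oplus H^{\ell+1}_{\mathfrak m}(M/U_M(0))$, together with its companion at the spot $\ell=d-1$. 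Using that $x_d$ is good, so $D_k\cap x_dM=0$ for $k\le t-1$, one identifies the dimension filtration of $\overline{M}$ (it is $\overline{D_0}\subset\cdots\subset\overline{D_{t-1}}\subset\overline{M}$ when $d_{t-1}<d-1$, and a version in which the top two steps merge when $d_{t-1}=d-1$) and deduces that, since $N:=M/U_M(0)$ is unmixed of dimension $d$ so $x_d$ is $N$-regular, $M$ is sequentially Cohen-Macaulay if and only if $\overline{M}$ is and $N$ is Cohen-Macaulay (the latter being subsumed when $d_{t-1}<d-1$). On the other side, feeding $(\ast)$ into both $M$ and $\overline{M}$ and using Lemma~\ref{B3.2.8} to see that $U_{M/x_dM}(0)$, hence every $U_j(M)$ read off from it, is independent of the $C$-parameter up to isomorphism, one reduces condition (ii) for $M$ to condition (ii) for $\overline{M}$ together with the assertion that successive parameter reductions of $N$ are unmixed down through every dimension in $[d_{t-1}+1,d]$. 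Finally, this last assertion is equivalent to $N$ being Cohen-Macaulay: a parameter element on an unmixed module of positive dimension is regular, so the unmixedness in question produces a regular sequence on $N$ whose quotient is again unmixed of the next lower dimension, and iterating with the unmixed theorem of the introduction forces $\depth N=\dim N$; the reverse implication is the easy half of that theorem. Matching the two reductions closes the induction.

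The hard part is the bookkeeping in the inductive step, in particular the degenerate case $d_{t-1}=d-1$, where $D_{t-1}$ has the same dimension as $\overline{M}$ and so is absorbed into the new top quotient: the block $[d_{t-1},d)$ of (ii) for $M$ then collapses to the always-true identity $U_{d-1}(M)=D_{t-1}$, and the Cohen-Macaulayness of $M/D_{t-1}$ has to be recovered from the blocks of index $\le t-2$. What makes this possible is precisely the on-the-nose splitting in Corollary~\ref{H3.2.5}, as opposed to a mere filtration with the right subquotients; checking that the unmixedness conditions $(\ast)$ really do reassemble into ``$D_{k+1}/D_k$ is Cohen-Macaulay for every $k$'' is the step that will require the most care.
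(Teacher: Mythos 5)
Your proposal takes a genuinely different route from the paper. You rephrase condition (ii) as the chain of unmixedness statements $(\ast)$ about the parameter reductions $(M/D_i)/(x_{j+2},\dots,x_d)(M/D_i)$ and then run an induction on $d$ driven by Corollary~\ref{H3.2.5} and the claimed equivalence ``$M$ is sequentially CM iff $M/x_dM$ is and $M/U_M(0)$ is CM.'' The paper instead dispatches both directions by quoting two external results: for (i)$\Rightarrow$(ii) it invokes \cite[Lemma 6.4]{CC07-1}, which says directly that a $dd$-sequence parameter reduction of a sequentially CM module is again sequentially CM with dimension filtration the images of the $D_i$, so $\overline{D_i}=U_{M_j}(0)=U_j(M)$; for (ii)$\Rightarrow$(i) it substitutes $D_i=U_j(M)$ into the polynomial formula of Proposition~\ref{M3.2.13} to get $I_{M,\underline{x}}(\underline{n})=\sum_{i<t}n_1\cdots n_{d_i}\,e(x_1,\dots,x_{d_i};D_i)$ and then cites the $I$-function characterization of sequentially CM modules in \cite[Theorem 4.2]{CC07-2}. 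That is a two-line computation once the two citations are in hand, whereas your inductive scheme has to rebuild both of those facts from scratch.

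The gaps in your sketch are exactly where you flag them, but they are more substantial than the write-up suggests. First, the forward half of your key equivalence (sequential Cohen-Macaulayness descends to $\overline M=M/x_dM$) \emph{is} the content of \cite[Lemma 6.4]{CC07-1}; deriving it directly requires identifying the dimension filtration of $\overline M$, and even in the clean case $d_{t-1}<d-1$ this uses that $N/x_dN$ is unmixed, that $U_M(0)\cap x_dM=0$ (which in turn uses $x_d\in\mathfrak b(M)$), and that $U_{\overline M}(0)$ is detected by the image filtration. Second, in the degenerate case $d_{t-1}=d-1$ the image of $D_{t-1}$ and the quotient $N/x_dN$ both have dimension $d-1$, so $U_{\overline M}(0)$ is no longer the image of $D_{t-1}$; showing that it equals the image of $D_{t-2}$, and then recovering ``$D_{t-1}/D_{t-2}$ is CM'' from ``$\overline M/D_{t-2}$ is CM'' via the exact sequence $0\to D_{t-1}/D_{t-2}\to\overline M/D_{t-2}\to N/x_dN\to 0$ and a depth-chase, is a nontrivial argument you describe only as ``the step that will require the most care.'' Finally, the matching of the unmixedness conditions $(\ast)$ across the inductive step (in particular reconciling the index shift between blocks of (ii) for $M$ and for $\overline M$) is asserted rather than carried out. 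So the plan appears salvageable but is considerably longer than the paper's proof and never reaches the mechanism --- the $dd$-sequence length-function formula and \cite[Theorem 4.2]{CC07-2} --- that makes the paper's (ii)$\Rightarrow$(i) immediate.
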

\begin{proof} $\mathrm{(i)}\Rightarrow \mathrm{(ii)}$ Let $\underline{x} = x_1,
\ldots, x_d$ be a $C$-system of parameters of $M$. By Proposition \ref{M3.2.13} it is a $dd$-sequence. By \cite[Lemma 6.4]{CC07-1}, $M/(x_{j+2},\ldots,x_d)M$ is a sequentially Cohen-Macaulay module with the dimension filtration \\
$$ D_0 \cong \frac{D_0+(x_{j+2},\ldots,x_d)M}{(x_{j+2},\ldots,x_d)M} \subset  \cdots  \subset D_i\cong \frac{D_i+(x_{j+2},\ldots,x_d)M}{(x_{j+2},\ldots,x_d)M} \subset M/(x_{j+2},\ldots,x_d)M$$
for all $i<t$ and for all $d_i \leq j <d_{i+1}$. Thus $D_i = U_j(M)$ for all $i<t$ and for all $d_i \leq j <d_{i+1}$.\\
$\mathrm{(ii) }\Rightarrow \mathrm{(i)}$ Let $\underline{x} = x_1,
\ldots, x_d$ be a $C$-system of parameters of $M$. By Proposition
\ref{M3.2.13} we have
$$I_{M,\underline{x}}(\underline{n}) = \sum_{j=0}^{d-1}n_1\ldots n_j e(x_1,\ldots,x_j;U_j(M))$$
for all $n_1,..,n_d \geq 1$.
Since $D_i = U_j(M)$ for all $i<t$ and for all $d_i \leq j <d_{i+1}$ we have
$e(x_1,\ldots,x_j;U_j(M)) = 0$ for all $i<t$ and for all $d_i < j <d_{i+1}$. Therefore
$$I_{M,\underline{x}}(\underline{n}) = \sum_{i=0}^{t-1}n_1\ldots n_{d_i} e(x_1,\ldots,x_{d_i};D_i)$$
for all $n_1,..,n_d \geq 1$. Hence $M$ is a sequentially Cohen-Macaulay module by \cite[Theorem 4.2]{CC07-2}. The proof is complete.
\end{proof}

\noindent{ \bf Relation with the Serre condition ($S_2$).} For each $R$-module $M$ we have a set of invariant modules $U_i(M)$, $0 \leq i \leq d-1$, as Theorem \ref{D3.2.9}. Therefore we have a special set of prime ideals, $\cup_{i=0}^{d-1}\mathrm{Ass}\, U_i(M)$, determined by $M$. If $\frak p \in
\mathrm{Ass}\, M$ and $\dim R/\frak p < d$, then $\frak p \in
\mathrm{Ass}\, U_M(0) = \mathrm{Ass}\, U_{d-1}(M)$. In the following we consider the relation between $\mathrm{Ass}\, U_{d-2}(M)$ and {\it the Serre condition
$(S_2)$}.
\begin{definition}\rm  For all $n \geq 1$, we say that $M$ satisfies {\it the Serre condition
$(S_n)$} at the prime ideal $\frak p \in \mathrm{Supp}(M)$ if
$$\mathrm{depth}M_\frak p \geq \min \{\dim M_{\frak p}, n\}.$$
Moreover, $M$ has property $(S_n)$ if it satisfies the Serre condition $(S_n)$ at all $\frak p \in
\mathrm{Supp}(M)$.
\end{definition}

It is obvious that $R$ satisfies the condition $(S_1)$ if and only if
$\mathrm{Ass}\, R = \mathrm{minAss}R$. Furthermore, if $R$ satisfies the condition
$(S_2)$ and $R$ is {\it catenary} (this condition is always true if $R$
is the homomorphic image of a Cohen-Macaulay ring), then $\mathrm{Ass}\, R =
\mathrm{Assh}\,R$ (see \cite[Corollary
2.24]{Sch98-1}). Conversely, Goto and Nakamura \cite[Lemma
3.2]{GN01} proved that if $\mathrm{Ass}\, R \subseteq
\mathrm{Assh}\,R \cup \{\frak m\}$, then the set
$$\mathcal{F}(R) = \{\mathfrak{p}\in \mathrm{Spec}(R)\,|\, \dim R_{\mathfrak{p}}>
1=\mathrm{depth}R_{\mathfrak{p}},\, \mathfrak{p} \neq \mathfrak{m}
\}$$ is finite, i.e. $R$ does not satisfy the Serre condition $(S_2)$ at only finitely many prime ideals. The set $\mathcal{F}(R)$ can be described as follows.
\begin{proposition}\label{M3.2.17}
Suppose that $\mathrm{Ass}\, M \subseteq \mathrm{Assh}\,M
\cup \{\frak m\}$. Set
$$\mathcal{F}(M) = \{\mathfrak{p}\in \mathrm{Supp}(M)\,|\, \dim M_{\mathfrak{p}}>
1=\mathrm{depth}\,M_{\mathfrak{p}},\, \mathfrak{p} \neq \mathfrak{m}
\}.$$ Then $\mathcal{F}(M) = \mathrm{Ass}\, U_{d-2}(M) \setminus
\{\frak m\}$.
\end{proposition}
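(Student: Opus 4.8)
The strategy is to compare both sides one prime at a time. Since $\frak m$ is excluded from $\mathcal{F}(M)$ by definition and from $\mathrm{Ass}\,U_{d-2}(M)\setminus\{\frak m\}$ trivially, and since the statement presupposes $d\ge 2$ (the other cases being vacuous), it is enough to prove that for each $\frak p\in\mathrm{Supp}(M)$ with $\frak p\neq\frak m$ one has $\frak p\in\mathcal{F}(M)$ if and only if $\frak p\in\mathrm{Ass}\,U_{d-2}(M)$. I would fix a $C$-system of parameters $x_1,\dots,x_d$ of $M$ and set $x=x_d$, a $C$-parameter element, so that $U_{d-2}(M)\cong U_{M/xM}(0)$ by definition; since $\dim M/xM=d-1$ this gives
$$\mathrm{Ass}\,U_{d-2}(M)=\{\frak q\in\mathrm{Ass}(M/xM)\mid \dim R/\frak q\le d-2\}.$$

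First I would unpack the hypothesis $\mathrm{Ass}\,M\subseteq\mathrm{Assh}\,M\cup\{\frak m\}$. Because $\mathrm{Ass}\,U_M(0)=\mathrm{Ass}\,M\setminus\mathrm{Assh}\,M\subseteq\{\frak m\}$, the module $U_M(0)$ has dimension $\le 0$, hence $U_M(0)=H^0_{\frak m}(M)$ and $\overline{M}:=M/U_M(0)$ is equidimensional of dimension $d$; as $R$ is a homomorphic image of a Cohen-Macaulay ring, hence universally catenary, this yields $\dim M_{\frak p}+\dim R/\frak p=d$ for every $\frak p\in\mathrm{Supp}(M)$ with $\frak p\neq\frak m$. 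Moreover $x\in\frak b(M)$ is a parameter element, so $0:_M x=U_M(0)=H^0_{\frak m}(M)$ by Remark \ref{C3.2.2}(ii); localizing at any $\frak p\neq\frak m$ shows that $x$ is $M_{\frak p}$-regular. Finally, from $\frak a(M)\subseteq\frak b(M)$ and $V(\frak a(M))=V\big(\bigcap_i\frak a_i(M)\big)=V(\frak b(M))$ we get $\sqrt{\frak a(M)}=\sqrt{\frak b(M)}$, and Faltings' annihilator theorem (Remark \ref{C3.1.2}(iii)) together with the dimension formula above shows that, for $\frak p\neq\frak m$, $\frak b(M)\not\subseteq\frak p$ if and only if $M_{\frak p}$ is Cohen-Macaulay; since $x\in\frak b(M)^3$, whenever $M_{\frak p}$ fails to be Cohen-Macaulay we have $x\in\frak p$.

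Now fix $\frak p\in\mathrm{Supp}(M)$, $\frak p\neq\frak m$, and split into two cases. If $M_{\frak p}$ is Cohen-Macaulay, then $\mathrm{depth}\,M_{\frak p}=\dim M_{\frak p}$, so $\frak p\notin\mathcal{F}(M)$; and on the other side, either $x\notin\frak p$, in which case $(M/xM)_{\frak p}=0$ so $\frak p\notin\mathrm{Supp}(M/xM)$, or $x\in\frak p$, in which case $M_{\frak p}/xM_{\frak p}$ is Cohen-Macaulay of dimension $\dim M_{\frak p}-1$, which has depth $0$ only when $\dim M_{\frak p}=1$, i.e.\ when $\dim R/\frak p=d-1$; in both sub-cases $\frak p\notin\mathrm{Ass}\,U_{d-2}(M)$. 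If $M_{\frak p}$ is not Cohen-Macaulay, then $x\in\frak p$; moreover $\frak p\neq\frak m$ with $\mathrm{Ass}\,M\subseteq\mathrm{Assh}\,M\cup\{\frak m\}$ forces $\mathrm{depth}\,M_{\frak p}\ge 1$ whenever $\dim M_{\frak p}\ge 1$, so here $\dim M_{\frak p}\ge 2$, $\mathrm{depth}\,M_{\frak p}\ge 1$, and $\dim R/\frak p\le d-2$. Since $x$ is $M_{\frak p}$-regular, $\mathrm{depth}(M/xM)_{\frak p}=\mathrm{depth}\,M_{\frak p}-1$, whence $\frak p\in\mathrm{Ass}(M/xM)$ iff $\mathrm{depth}\,M_{\frak p}=1$; combined with $\dim R/\frak p\le d-2$ this is equivalent to $\frak p\in\mathrm{Ass}\,U_{d-2}(M)$. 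On the other hand $\frak p\in\mathcal{F}(M)$ iff $\mathrm{depth}\,M_{\frak p}=1$, since $\dim M_{\frak p}\ge 2>1$. This gives $\frak p\in\mathcal F(M)\iff\frak p\in\mathrm{Ass}\,U_{d-2}(M)$, as desired.

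The main point — and the only step that needs real thought — is the observation that $x\in\frak b(M)^3$ guarantees $x\in\frak p$ precisely at the primes $\frak p\neq\frak m$ where $M_{\frak p}$ fails to be Cohen-Macaulay (equivalently, given the depth bounds above, where $M$ fails the condition $(S_2)$); this is what turns the localized exact sequence $0\to M_{\frak p}\xrightarrow{x}M_{\frak p}\to(M/xM)_{\frak p}\to 0$ into the right computational device. Everything else is bookkeeping with the hypothesis, and the well-definedness of $U_{d-2}(M)$ is already Theorem \ref{D3.2.9}, so no work is needed there.
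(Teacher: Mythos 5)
Your proof is correct, but it takes a genuinely different route from the paper's at the crucial step. Both arguments share the same scaffolding: $U_{d-2}(M)\cong U_{M/xM}(0)$ for a $C$-parameter element $x$, and the hypothesis $\mathrm{Ass}\,M\subseteq\mathrm{Assh}\,M\cup\{\frak m\}$ makes $U_M(0)=H^0_{\frak m}(M)$, so $x$ is $M_{\frak p}$-regular for all $\frak p\neq\frak m$, turning the condition $\frak p\in\mathrm{Ass}(M/xM)$ into $\mathrm{depth}\,M_{\frak p}=1$. The divergence is in how each argument deals with primes $\frak p$ not containing $x$. The paper keeps the forward inclusion with a single $x$, but for the reverse inclusion it varies the $C$-parameter element: given $\frak p\in\mathcal{F}(M)$, it chooses a parameter $z\in\frak p$, observes that $xz$ is again a $C$-parameter element lying in $\frak p$, and then uses the invariance $U_{M/(xz)M}(0)\cong U_{d-2}(M)$ (Theorem~\ref{D3.2.9}) to conclude. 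You instead keep $x$ fixed throughout and invoke Faltings' annihilator theorem (Remark~\ref{C3.1.2}(iii)) together with $\sqrt{\frak a(M)}=\sqrt{\frak b(M)}$ to show that $x\in\frak b(M)^3\subseteq\frak p$ at every $\frak p\neq\frak m$ where $M_{\frak p}$ fails to be Cohen-Macaulay, so the troublesome primes with $x\notin\frak p$ simply do not arise in the non-CM case. Both routes are valid; the paper's is more elementary (no Faltings needed), while yours is conceptually tighter in that one element serves all primes simultaneously and the desired equivalence falls out uniformly from the CM/non-CM dichotomy at $\frak p$.
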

\begin{proof} Notice that $R$ is the homomorphic image of a Cohen-Macaulay local ring, and $M$ is equidimensional since $\mathrm{Ass}\, M \subseteq \mathrm{Assh}\, M \cup \{\frak m\}$. Let $x$ be a $C$-parameter element of $M$. For all $\frak p \in \mathrm{Ass}\, U_{d-2}(M) \setminus \{\frak
m\}$ we have $\frak p \in \mathrm{Ass}\, M/xM$ and $\dim R/\frak p \leq
d-2$. Hence $\dim M_{\mathfrak{p}}>
1=\mathrm{depth}\,M_{\mathfrak{p}}$. So $\mathrm{Ass}\, U_{d-2}(M)
\setminus \{\frak m\} \subseteq \mathcal{F}(M)$.\\
Conversely, let $\mathfrak{p}\in \mathcal{F}(M)$. Since
$\mathrm{depth}\,M_{\mathfrak{p}} = 1$, for every parameter element $z
\in \mathfrak{p}$ we have $\mathfrak{p}\in \mathrm{Ass}\,  M/zM$. Therefore
$\mathfrak{p}\in \mathrm{Ass}\,  M/(xz)M$. Notice that $xz$ is a $C$-parameter element of $M$ and $\dim R/\frak p \leq
d-2$, so $\frak p \in \mathrm{Ass}\, U_{M/(xz)M}(0) \cong
\mathrm{Ass}\, U_{d-2}(M)$. The proof is complete.
\end{proof}

\begin{remark}\label{C3.2.18}\rm Let $M$ be a finitely generated $R$-module.
\begin{enumerate}[{(i)}]
\item Suppose that $\mathrm{Ass}\, M \subseteq \mathrm{Assh}\,M \cup
\{\frak m\}$ and $\mathcal{F}(M)$ as the previous proposition. Let $x$ be a parameter element of $M$ such that $x
\notin \frak p$ for all $\frak p \in \mathcal{F}(M)$. Then $M$
satisfies the Serre condition $(S_2)$ at all prime ideals $\frak p \in \mathrm{Supp}\,M$ containing $x$ and $\frak p
\neq \frak m$. So $M/xM$ satisfies the Serre condition $(S_1)$ at all $\frak p \in \mathrm{Supp}\,(M/xM)$ and $\frak p
\neq \frak m$. Hence
$$\mathrm{Ass}\, (M/xM) \subseteq \mathrm{minAss}\,(M/xM) \cup \{\frak m\} = \mathrm{Assh}\,(M/xM) \cup \{\frak m\}.$$
\item Set $\overline{M} = M/U_M(0)$. Let $x \in \frak b(M)^3 \cap \frak
b(\overline{M})^3$ be a parameter element of $M$ and hence of $\overline{M}$. Put $\frak b' = \frak b(M/xM)$,
$\frak b'' = \frak b(\overline{M}/x\overline{M})$ and $\frak b  =
\frak b' \cap \frak b''$. We have that $\dim R/\frak b \leq d-2$. By Remark \ref{C3.2.2} (i) we have
$U_{d-2}(M) \cong H^0_{\frak b'}(M/xM) \subseteq H^0_{\frak
b}(M/xM)$. However $\dim H^0_{\frak b}(M/xM) < d-1$, so
$U_{d-2}(M) \cong H^0_{\frak b}(M/xM)$. Similarly, we have
$U_{d-2}(\overline{M}) \cong H^0_{\frak
b}(\overline{M}/x\overline{M})$. By the proof of Lemma \ref{B3.2.8} we have
$$U_{d-2}(M) \cong H^0_{\frak
b}(M) \oplus H^1_{\frak b}(\overline{M})$$ and
$$U_{d-2}(\overline{M}) \cong H^0_{\frak
b}(\overline{M}) \oplus H^1_{\frak b}(\overline{M}) =
H^1_{\frak b}(\overline{M}).$$
Thus
$U_{d-2}(\overline{M})$ is isomorphism to a direct summand of $U_{d-2}(M)$.
\end{enumerate}
\end{remark}

The next result will play an important role in the next section.
\begin{proposition}\label{M3.2.19} Let $M$ be a finitely generated $R$-module of dimension $d \geq
2$. Let $x$ be a parameter element of $M$ such that $x \notin \frak p$ for all $\frak p \in \big( \mathrm{Ass}\, U_{M}(0)  \cup \mathrm{Ass}\, U_{d-2}(M) \big)
\setminus \{\frak m\}$. Then we have the following short exact sequence
$$0 \to U_M(0)/xU_M(0) \to U_{M/xM}(0) \to H^0_{\frak m}(\overline{M}/x\overline{M}) \to 0,$$
where $\overline{M} = M/U_M(0)$.
\end{proposition}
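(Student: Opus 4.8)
The plan is to descend the defining sequence $0\to U_M(0)\to M\to\overline M\to 0$ modulo $x$, to identify $U_{M/xM}(0)$ as the $q$-preimage of $U_{\overline M/x\overline M}(0)$ where $q\colon M/xM\to\overline M/x\overline M$ is the induced surjection, and finally to show that for the particular element $x$ in the statement the module $U_{\overline M/x\overline M}(0)$ collapses onto $H^0_{\frak m}(\overline M/x\overline M)$.

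First I would record the elementary facts about $\overline M=M/U_M(0)$: by Remark~\ref{C3.2.2} it is unmixed of dimension $d$ with $\mathrm{Ass}\,\overline M=\mathrm{Assh}\,M$, so since $x$ is a parameter element of $M$ it lies outside every associated prime of $\overline M$ and is therefore a nonzerodivisor on $\overline M$. Tensoring $0\to U_M(0)\to M\to\overline M\to 0$ with $R/xR$ and using $\Tor_1^R(\overline M,R/xR)=(0:_{\overline M}x)=0$ gives an exact sequence $0\to U_M(0)/xU_M(0)\to M/xM\xrightarrow{q}\overline M/x\overline M\to 0$ with $\dim\overline M/x\overline M=d-1$; write $K=\ker q$, so $K\cong U_M(0)/xU_M(0)$.

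Next I would use the first half of the hypothesis. Because $\dim U_M(0)<d$, the associated primes of $U_M(0)$ of dimension $\ge d-1$ have dimension exactly $d-1$, hence (as $d\ge 2$) are non-maximal and so avoided by $x$; since $\mathrm{Supp}\bigl(U_M(0)/xU_M(0)\bigr)=\mathrm{Supp}\,U_M(0)\cap V(x)$, this forces $\dim K\le d-2$. A dimension count then yields $U_{M/xM}(0)=q^{-1}\bigl(U_{\overline M/x\overline M}(0)\bigr)$: the image under $q$ of $U_{M/xM}(0)$ has dimension $\le d-2<\dim\overline M/x\overline M$, so it lies in $U_{\overline M/x\overline M}(0)$, giving ``$\subseteq$''; conversely $q^{-1}\bigl(U_{\overline M/x\overline M}(0)\bigr)$ is an extension of $U_{\overline M/x\overline M}(0)$ (dimension $\le d-2$) by $K$ (dimension $\le d-2$), hence has dimension $\le d-2<\dim M/xM$ and lies in $U_{M/xM}(0)$. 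Since $K\subseteq U_{M/xM}(0)$, applying $q$ to $U_{M/xM}(0)$ produces the exact sequence $0\to U_M(0)/xU_M(0)\to U_{M/xM}(0)\xrightarrow{q}U_{\overline M/x\overline M}(0)\to 0$, so it only remains to show $U_{\overline M/x\overline M}(0)=H^0_{\frak m}(\overline M/x\overline M)$.

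This last equality is the heart of the proof, and the second half of the hypothesis enters here. Since $H^0_{\frak m}(-)\subseteq U_{(-)}(0)$ in general, it suffices to prove $\dim U_{\overline M/x\overline M}(0)=0$, i.e. that $\mathrm{Ass}(\overline M/x\overline M)\setminus\{\frak m\}$ contains no prime of dimension $\le d-2$. I would apply Remark~\ref{C3.2.18}(i) to the unmixed module $\overline M$ (which satisfies $\mathrm{Ass}\,\overline M\subseteq\mathrm{Assh}\,\overline M\cup\{\frak m\}$ and has $x$ as a parameter element): it gives $\mathrm{Ass}(\overline M/x\overline M)\subseteq\mathrm{Assh}(\overline M/x\overline M)\cup\{\frak m\}$ once one knows $x\notin\frak p$ for all $\frak p\in\mathcal F(\overline M)$. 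By Proposition~\ref{M3.2.17} applied to $\overline M$ one has $\mathcal F(\overline M)=\mathrm{Ass}\,U_{d-2}(\overline M)\setminus\{\frak m\}$, and by Remark~\ref{C3.2.18}(ii) $U_{d-2}(\overline M)$ is isomorphic to a direct summand of $U_{d-2}(M)$, whence $\mathcal F(\overline M)\subseteq\mathrm{Ass}\,U_{d-2}(M)\setminus\{\frak m\}$ — which is exactly the set the proposition assumes $x$ avoids. The hardest point throughout is this transfer step: one must check that $\overline M$ inherits the hypotheses needed for the Serre-condition results of the previous subsection, and recognize that the ``low-dimensional'' part of $U_{M/xM}(0)$ is forced to come entirely from $U_M(0)/xU_M(0)$, so that what survives in $\overline M/x\overline M$ has finite length.
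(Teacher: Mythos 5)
Your argument is correct and follows the paper's own proof closely: derive the exact sequence $0 \to U_M(0)/xU_M(0) \to M/xM \to \overline{M}/x\overline{M} \to 0$ from the fact that $x$ is $\overline{M}$-regular, use the first half of the hypothesis (with $d\ge 2$) to force $\dim U_M(0)/xU_M(0)\le d-2$ and hence obtain the induced short exact sequence on unmixed components, and then invoke Proposition~\ref{M3.2.17} and Remark~\ref{C3.2.18}\,(i),(ii) to show $U_{\overline{M}/x\overline{M}}(0)=H^0_{\frak m}(\overline{M}/x\overline{M})$. The only difference is cosmetic: you make explicit the identification $U_{M/xM}(0)=q^{-1}\bigl(U_{\overline{M}/x\overline{M}}(0)\bigr)$ via a dimension count, a step the paper leaves implicit.
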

\begin{proof}
Since $U_M(0) \cap xM = x(U_M(0) :_Mx) = xU_M(0)$, we have the following short exact sequence
$$0 \to U_M(0)/xU_M(0) \overset{\varphi}{\to} M/xM \to \overline{M}/x\overline{M} \to 0.$$
If $\dim U_M(0) = 0$ then $\dim U_M(0)/xU_M(0) < d-1$. If $\dim
U_M(0) > 0$ then $x$ is a parameter element of both $M$ and $U_M(0)$ so
$\dim U_M(0)/xU_M(0) = \dim U_M(0) - 1 < d-1$. Notice that $\mathrm{Im}(\varphi) = (U_M(0) + xM)/xM$. Thus we always have that
$(U_M(0) + xM)/xM$ is a submodule of $M/xM$ of dimension less than $d-1$. Hence $\mathrm{Im}(\varphi) = (U_M(0) + xM)/xM \subseteq U_{M/xM}(0)$. So we have the short exact sequence
$$0 \to U_M(0)/xU_M(0) \to U_{M/xM}(0) \to U_{\overline{M}/x\overline{M}}(0) \to 0.$$
On the other hand $x \notin \frak p$ for all $\frak p \in
\mathrm{Ass}\, U_{d-2}(M) \setminus \{\frak m\}$. So $x \notin \frak p$
for all $\frak p \in \mathrm{Ass}\, U_{d-2}(\overline{M}) \setminus
\{\frak m\}$ by Remark \ref{C3.2.18} (ii). By Remark \ref{C3.2.18} (i) we have
$$\mathrm{Ass}\, (\overline{M}/x\overline{M}) \subseteq
\mathrm{Assh}\,(\overline{M}/x\overline{M}) \cup \{\frak m\}.$$ Therefore
$U_{\overline{M}/x\overline{M}}(0) = H^0_{\frak
m}(\overline{M}/x\overline{M})$. Thus we obtain the short exact sequence
$$0 \to U_M(0)/xU_M(0) \to U_{M/xM}(0) \to H^0_{\frak m}(\overline{M}/x\overline{M}) \to 0.$$
\end{proof}

\section{The unmixed degrees}
In this section let $I$ be an $\frak m$-primary ideal and $M$ a finitely generated
$R$-module of dimension $d
> 0$.  Let $U_i(M)$, $0 \leq i \leq d-1$, be the Cohen-Macaulay deviated sequence of $M$. The purpose of this section is to construct a new degree for
$M$ in terms of $U_i(M)$. Firstly, recalling that
the length function $\ell(M/I^nM)$ becomes a polynomial of degree $d$ when $n \gg 0$ and
$$\ell(M/I^{n+1}M) = \sum_{i=0}^d(-1)^i e_i(I,M) \binom{n+d-i}{d-i}.$$
The coefficients $e_i(I, M)$, for $i = 0,\ldots,d$, are called
the Hilbert coefficients of $M$ with respect to $I$. Especially the leading coefficient
$e_0(I, M)$ or $e(I, M)$ is called {\it the Hilbert-Samuel multiplicity} of $M$ with respect to $I$. If $I=\frak m$, the multiplicity is denoted by $e(M)$ for simplicity. In this section we denote by $\mathrm{deg}(I, M)$ (resp.
$\mathrm{deg}(M)$) the multiplicity $e(I, M)$ (resp. $e(M)$) and call it {\it the degree} of $M$ with respect to $I$ (resp. the degree of $M$). The following associativity formula for the degree says that $\mathrm{deg}(I, M)$ depends only on the associated prime ideals of the highest dimension
(see \cite[Corollary 4.7.8]{BH98})
$$\mathrm{deg}(I, M) = \sum_{\frak p \in \mathrm{Assh}\,M}\ell_{R_{\frak p}}(M_{\frak p})\, \mathrm{deg}(I, R/\frak p). $$
Notice that if $\frak p \in \mathrm{minAss}\,M$, then $M_{\frak p}$ has finite length and
$M_{\frak p} = H^0_{\frak p R_{\frak p}}(M_{\frak p})$. So we have
$$\mathrm{deg}(I, M) = \sum_{\frak p \in \mathrm{Assh}\,M}
\ell_{R_{\frak p}}(H^0_{\frak p R_{\frak p}}(M_{\frak p}))\,
\mathrm{deg}(I, R/\frak p). $$ We next recall some other degrees of $M$ related to $\mathrm{deg}(I, M)$
(see \cite{V98-2}).
\begin{definition}\label{adeg}\rm
The {\it arithmetic degree} of $M$ with respect to $I$, denoted by $\mathrm{adeg}(I, M)$, is the integer
$$\mathrm{adeg}(I, M) =
\sum_{\frak p \in \mathrm{Ass}\, M}\ell_{R_{\frak p}}(H^0_{\frak p
R_{\frak p}}(M_{\frak p}))\, \mathrm{deg}(I, R/\frak p). $$
\end{definition}

\begin{remark} \label{C3.3.2} \rm
\begin{enumerate}[{(i)}]
\item Let $\mathcal{D} : D_0 \subseteq D_1 \subseteq \cdots
\subseteq D_t = M$ be the dimension filtration of $M$, then we have $\mathrm{adeg}(I, M)
= \sum_{i=0}^t \mathrm{deg}(I, D_i)$. So $\mathrm{adeg}(I, M) \geq \mathrm{deg}(I,
M)$ and the equality occurs if and only if $U_M(0) = 0$.
\item  Moreover, assume $(R, \frak m)$ is the homomorphic image of a Gorenstein local ring $(S, \frak n)$ of dimension
$n$. Then $\mathrm{adeg}(I, M)$ can be determined without the knowledge of the primary decomposition as follows
$$\mathrm{adeg}(I, M) = \sum_{i} \mathrm{deg}(I, \mathrm{Ext}^{i}_S(\mathrm{Ext}^{i}_S(M, S), S)).$$
\end{enumerate}
\end{remark}

Vasconcelos et al. \cite{DGV98, V98-1, V98-2} introduced the notion of {\it extended degree of graded modules} in order to capture the size of a module along with some of the complexity of its
structure. The prototype of an extended degree is the {\it homological degree} introduced and studied by Vasconselos in  \cite{V98-1} (see also \cite{V98-2}). The extended degree for local rings was considered by Rossi, Trung and Valla in \cite{RTV03}.
This notion is associated with an $\frak m$-primary ideal $I$ in \cite{L05}. From this definition we always assume that $(R, \frak m)$ is a local ring with infinite residue field.

\begin{definition}\label{D3.3.3}\rm Let $\mathcal{M}(R)$ be the category of finitely generated
$R$-modules. An {\it extended degree} on $\mathcal{M}(R)$
with respect to $I$ is a numerical function
$$ \mathrm{Deg}(I, \bullet) : \mathcal{M}(R) \to \mathbb{R} $$
satisfying the following conditions
\begin{enumerate}[{(i)}]
\item $\mathrm{Deg} (I, M) = \mathrm{Deg}(I, \overline{ M}) + \ell(H^0_{\frak
m}(M))$, where $\overline{M} = M/H^0_{\frak m}(M)$;
\item (Bertini's rule) $\mathrm{Deg}(I, M) \geq \mathrm{Deg}(I, M/xM)$ for every generic element $x \in I\setminus \frak mI$ of $M$;
\item If $M$ is Cohen-Macaulay then $\mathrm{Deg}(I, M) =
\mathrm{deg}(I, M)$.
\end{enumerate}
\end{definition}

The homological degree is a typical extended degree that is defined as follows.
\begin{definition}[\cite{V98-1}] \label{D3.3.4} \rm Assume that $(R, \frak m)$ is the homomorphic image of a Gorenstein local ring
$(S, \frak n)$ of dimension $n$, and $M$ a finitely generated $R$-module of dimension $d$. Then the {\it homological degree}, $\mathrm{hdeg}(I, M)$, of $M$ with respect to $I$ is defined by the following recursive formula
$$\mathrm{hdeg}(I, M) = \mathrm{deg}(I, M) + \sum_{i=n-d+1}^n \binom{d-1}{i-n+d-1} \mathrm{hdeg}(I, \mathrm{Ext}^i_S(M, S)).$$
\end{definition}

\begin{remark} \label{C3.3.5} \rm
\begin{enumerate}[{(i)}]
\item The Definition \ref{D3.3.4} is recursive on dimension since $\dim \mathrm{Ext}^i_S(M, S) \le n-i <
d$ for all $i = n-d+1,\ldots, n$.
\item The homological degree $\mathrm{hdeg}(I,\bullet)$ is an extended degree on $\mathcal{M}(R)$, and $\mathrm{hdeg}(I,M) =
\mathrm{deg}(I, M)$ if and only if $M$ is Cohen-Macaulay.
\item If $M$ is a generalized Cohen-Macaulay module, then $\ell(\mathrm{Ext}^{n-i}_S(M,
S)) = \ell(H^i_{\frak m}(M))$ for all $i = 0, \ldots, d-1$ by the local duality theorem. We have
$$\mathrm{hdeg}(I, M) = \mathrm{deg}(I, M) + \sum_{i=0}^{d-1} \binom{d-1}{i} \ell(H^i_{\frak m}(M)).$$
\item (\cite[Proposition 3.5]{V98-2}) If $\dim M = \dim S =2$ then
$$\mathrm{hdeg}(I, M) = \mathrm{adeg}(I, M) + \ell(\mathrm{Ext}^2_S(\mathrm{Ext}^1_S(M, S),S)).$$
\end{enumerate}
\end{remark}

The purpose of this section is to introduce another extended degree on $\mathcal{M}(R)$ in terms of the Cohen-Macaulay deviated sequence $U_i(M)$, $i = 0,\ldots,d-1$. Notice that $\dim U_i(M) \leq i$ for all $0 \leq
i \leq d-1$.
\begin{definition}\rm Let $M$ be a finitely generated $R$-module of dimension
$d$ and $U_i(M)$, $0 \leq i \leq d-1$, the Cohen-Macaulay deviated sequence of $M$. We define the {\it unmixed degree} of $M$ with respect to
$I$, $\mathrm{udeg}(I, M)$, as follows
$$\mathrm{udeg}(I, M) = \mathrm{deg}(I, M) + \sum_{i=0}^{d-1}\delta_{i, \dim U_i(M)}\mathrm{deg}(I, U_i(M)),$$
where $\delta_{i, \dim U_i(M)}$ is the Kronecker symbol.
\end{definition}
It is worth noting that in the above definition and Proposition \ref{M3.2.13} we consider the subsequence of modules of the Cohen-Macaulay deviated sequence consisting of $U_i(M)$ with $\dim U_i(M) = i$. We call this subsequence the {\it reduced Cohen-Macaulay deviated sequence} of $M$. In the rest of this paper, we shall prove that the unmixed degree is an extended degree. The first condition of Definition \ref{D3.3.3} follows from the following.

\begin{proposition}\label{M3.3.9}
Let $N$ be a submodule of finite length of $M$. Then
$$\mathrm{udeg}(I,M) = \mathrm{udeg}(I,M/N) + \ell(N).$$
\end{proposition}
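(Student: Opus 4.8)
The plan is to produce a single $C$-system of parameters that works simultaneously for $M$ and for $M/N$, then to read each term $U_i(M)$ of the Cohen--Macaulay deviated sequence off it and compare with $U_i(M/N)$. We may assume $N\neq 0$, so $\dim N=0$, $N\subseteq H^0_{\frak m}(M)$, and (as $d>0$) $\dim(M/N)=d$; thus $M$ and $M/N$ have the same systems of parameters. A first observation is that $\mathrm{deg}(I,M)=\mathrm{deg}(I,M/N)$: since $N$ has finite length, $M_{\frak p}\cong(M/N)_{\frak p}$ for every $\frak p\neq\frak m$, so $\mathrm{Assh}\,M=\mathrm{Assh}(M/N)$ with the same top-dimensional localizations, and the associativity formula for the degree applies; the same reasoning gives $\mathrm{deg}(I,U)=\mathrm{deg}(I,U/N')$ for any finite-length $N'\subseteq U$ with $\dim U\geq 1$.

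The key step is the lemma: \emph{if $P$ is finitely generated with $\dim P\geq 1$ and $L\subseteq P$ has finite length, then $\frak b(P)\subseteq\frak b(P/L)$.} I would prove this straight from the definition of $\frak b$. Let $\underline y$ be a system of parameters of $P/L$ (equivalently, of $P$) and fix $j$; put $P_j=P/(y_1,\dots,y_{j-1})P$ and let $L_j$ be the image of $L$ in $P_j$. Since $y_j$ is a parameter element of $P_j$, the module $(0:y_j)_{(P/L)/(y_1,\dots,y_{j-1})(P/L)}=(0:_{P_j/L_j}y_j)$ has dimension $<\dim P_j$, so it lies inside $U_{P_j/L_j}(0)$; and because $L_j\subseteq H^0_{\frak m}(P_j)\subseteq U_{P_j}(0)$ one has $U_{P_j/L_j}(0)=U_{P_j}(0)/L_j$. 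By Lemma~\ref{B3.1.9} (iterated) and Remark~\ref{C3.2.2}(ii), $\frak b(P)\subseteq\frak b(P_j)\subseteq\mathrm{Ann}\,U_{P_j}(0)$, so $\frak b(P)$ annihilates $U_{P_j}(0)/L_j$, hence $(0:_{P_j/L_j}y_j)$; intersecting over all $\underline y$ and $j$ gives the claim. Applying the lemma with $P=M$ and with each $P=M/(x_{i+1},\dots,x_d)M$ and then cubing, every $C$-system of parameters $\underline x=x_1,\dots,x_d$ of $M$ is also a $C$-system of parameters of $M/N$.

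Now fix such an $\underline x$, write $M_{i+1}=M/(x_{i+2},\dots,x_d)M$, and let $\bar N_i$ be the image of $N$ in $M_{i+1}$. Exactly as in the lemma, $U_i(M/N)\cong U_{M_{i+1}/\bar N_i}(0)=U_{M_{i+1}}(0)/\bar N_i=U_i(M)/\bar N_i$ for all $0\leq i\leq d-1$, using $\dim M_{i+1}=i+1\geq 1$ and $\bar N_i\subseteq H^0_{\frak m}(M_{i+1})\subseteq U_{M_{i+1}}(0)$. Since $\underline x$ is a $dd$-sequence by Proposition~\ref{M3.2.13}, hence a good system of parameters, and $N\subseteq H^0_{\frak m}(M)$ (the dimension-$0$ part of $M$), we get $N\cap(x_{i+2},\dots,x_d)M=0$, so $\bar N_i\cong N$ for every $i$. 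Now compare the defining sums of $\mathrm{udeg}$: for $i\geq 1$ we have $\dim U_i(M/N)=\dim U_i(M)$, and when this common value equals $i$ then $\mathrm{deg}(I,U_i(M/N))=\mathrm{deg}(I,U_i(M)/\bar N_i)=\mathrm{deg}(I,U_i(M))$ because $\dim\bar N_i=0<i$; so the contributions with $i\geq 1$ are unchanged. For $i=0$, $U_0(M)=H^0_{\frak m}(M_1)\supseteq\bar N_0\cong N\neq 0$, so $\mathrm{udeg}(I,M)$ picks up $\ell(U_0(M))$ while $\mathrm{udeg}(I,M/N)$ picks up $\ell(U_0(M)/\bar N_0)=\ell(U_0(M))-\ell(N)$ (which remains correct when $U_0(M/N)=0$). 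Adding up, together with $\mathrm{deg}(I,M)=\mathrm{deg}(I,M/N)$, yields $\mathrm{udeg}(I,M/N)=\mathrm{udeg}(I,M)-\ell(N)$.

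I expect the main obstacle to be the stability lemma $\frak b(P)\subseteq\frak b(P/L)$: naively the relevant sub-quotients of $P/L$ gain a finite-length piece that $\frak b(P)$ need not kill, and what saves the argument is that $L$ sits inside $U_P(0)$, which \emph{is} annihilated by $\frak b(P)$. The second, smaller point not to overlook is the goodness of a $dd$-system of parameters, which is exactly what guarantees that $N$ (and not merely a proper quotient of it) embeds into the bottom term $U_0(M)$, so that the length subtracted is precisely $\ell(N)$.
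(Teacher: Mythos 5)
Your proof is correct and follows the paper's line of reasoning once a common $C$-system of parameters is available: goodness of the $dd$-sequence gives $N\cap(x_{j+2},\ldots,x_d)M=0$, hence $U_j(M/N)\cong U_j(M)/N$ for all $j$, and comparing the terms of the defining sum yields $\mathrm{udeg}(I,M)-\mathrm{udeg}(I,M/N)=\ell(N)$. The one genuine addition is your stability lemma $\frak b(P)\subseteq\frak b(P/L)$ for a finite-length submodule $L$. The paper simply \emph{chooses} a $C$-system of parameters of both $M$ and $M/N$, which is unproblematic by prime avoidance since $\dim R/\bigl(\frak b(M)^3\cap\frak b(M/N)^3\bigr)<d$ and the same holds at each lower stage; your lemma proves the stronger assertion that \emph{every} $C$-system of parameters of $M$ is automatically one of $M/N$. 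The argument for the lemma is sound, and the point you isolate --- that $L$ sits inside $U_P(0)$, which $\frak b(P)$ kills, so the extra finite-length piece does no harm --- is exactly what makes it go through; it is a nice companion to Lemma~\ref{B3.1.9}, though more than Proposition~\ref{M3.3.9} strictly requires.
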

\begin{proof}
Let $x_1,\ldots,x_d$ be a $C$-system of parameters of both $M$ and $M/N$. By Proposition \ref{M3.2.13}
$x_1,\ldots,x_d$ is a $dd$-sequence of $M$. So $H^0_{\frak m}(M) \cap
(x_1,\ldots,x_d)M = 0$. For all $0 \leq j \leq d-1$, we have the short exact sequence
$$0 \to N \to M/(x_{j+2},\ldots,x_d)M \to M/(N+(x_{j+2},\ldots,x_d)M) \to 0.$$
Therefore $U_j(M/N) \cong U_j(M)/N$ for all $0 \leq j \leq d-1$. Thus
$$\delta_{j, \dim U_j(M/N)} \mathrm{deg}(I,U_j(M/N)) =
 \delta_{j, \dim U_j(M)} \mathrm{deg}(I,U_j(M))$$
for all $1 \leq j \leq d-1$ and
$$\delta_{0, \dim U_0(M/N)} \mathrm{deg}(I,U_0(M/N)) =
\delta_{0, \dim U_0(M)} \mathrm{deg}(I,U_0(M)) - \ell(N).$$ The claim is now obvious.
\end{proof}
The next result shows that $\mathrm{udeg}(I, M)$ agrees with $\mathrm{hdeg}(I, M)$ for generalized Cohen-Macaulay modules.
\begin{proposition} \label{M3.3.10} Let $M$ be a generalized Cohen-Macaulay $R$-module of dimension $d$. Then
$$\mathrm{udeg}(I,M) = \mathrm{deg}(I,M) + \sum_{j=0}^{d-1} \binom{d-1}{j}\ell(H^j_\frak{m}(M)).$$
\end{proposition}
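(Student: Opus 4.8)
The plan is to compute the reduced Cohen-Macaulay deviated sequence of a generalized Cohen-Macaulay module explicitly, and then substitute into the definition of $\mathrm{udeg}(I,M)$. Recall that if $M$ is generalized Cohen-Macaulay, then $H^i_{\mathfrak m}(M)$ has finite length for all $i<d$, so there is $n_0$ with $\mathfrak m^{n_0}H^i_{\mathfrak m}(M)=0$ for all $i<d$. First I would choose a $C$-system of parameters $x_1,\ldots,x_d$ of $M$ lying in a sufficiently high power of $\mathfrak m$; this is possible because every finitely generated $R$-module admits $C$-systems of parameters, and by Corollary \ref{H3.2.12} we may further replace the $x_i$ by powers to land inside $\mathfrak m^{2n_0}$. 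For such a system of parameters, the formula quoted in the text (from \cite[Corollary 4.2]{CQ11}) gives
$$U_{M/(x_{i+2},\ldots,x_d)M}(0) = H^0_{\mathfrak m}(M/(x_{i+2},\ldots,x_d)M) \cong \bigoplus_{j=0}^{d-1-i} H^j_{\mathfrak m}(M)^{\binom{d-1-i}{j}}$$
for all $0\le i\le d-1$. By the definition of $U_i(M)$ in the excerpt, this identifies $U_i(M)$ up to isomorphism with the displayed direct sum.

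Next I would observe that each such $U_i(M)$ is a finite-length module, being a finite direct sum of the finite-length modules $H^j_{\mathfrak m}(M)$, $j<d$. Hence $\dim U_i(M)=0$ for every $i$ with $U_i(M)\ne 0$. Therefore in the defining sum for $\mathrm{udeg}(I,M)$ the Kronecker symbol $\delta_{i,\dim U_i(M)}$ vanishes for all $i\ge 1$ and equals $1$ only for $i=0$ (when $U_0(M)\ne 0$; and if $U_0(M)=0$ the corresponding term is zero anyway). Thus only the $i=0$ term survives:
$$\mathrm{udeg}(I,M) = \mathrm{deg}(I,M) + \mathrm{deg}(I, U_0(M)).$$
Now $U_0(M)$ has finite length, so $\mathrm{deg}(I,U_0(M)) = \ell(U_0(M))$, and from the isomorphism above with $i=0$ we get $\ell(U_0(M)) = \sum_{j=0}^{d-1}\binom{d-1}{j}\ell(H^j_{\mathfrak m}(M))$.

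Combining these equalities yields exactly
$$\mathrm{udeg}(I,M) = \mathrm{deg}(I,M) + \sum_{j=0}^{d-1}\binom{d-1}{j}\ell(H^j_{\mathfrak m}(M)),$$
as claimed. The only point that requires a little care — and the step I expect to be the main obstacle — is justifying the identification $U_i(M) \cong \bigoplus_{j} H^j_{\mathfrak m}(M)^{\binom{d-1-i}{j}}$: one must verify that the $C$-system of parameters can indeed be taken inside $\mathfrak m^{2n_0}$ (using Corollary \ref{H3.2.12} to pass to powers) so that the splitting result of \cite{CQ11} applies, and that the resulting module is genuinely independent of the choice, which is precisely the content of Theorem \ref{D3.2.9}. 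Everything else is a routine bookkeeping of Kronecker symbols and the elementary fact that the degree of a finite-length module with respect to an $\mathfrak m$-primary ideal is its length.
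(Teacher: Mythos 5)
Your proof is correct and follows essentially the same approach as the paper: compute the Cohen-Macaulay deviated sequence $U_i(M)$ as direct sums of local cohomology modules via the splitting, note every $U_i(M)$ has finite length so only the $i=0$ term of the defining sum survives, and identify $\deg(I,U_0(M)) = \ell(U_0(M))$. The only (minor) procedural difference is that you route through the older result \cite[Corollary 4.2]{CQ11} and hence take the extra step of placing the $C$-system of parameters inside $\mathfrak m^{2n_0}$ via Corollary \ref{H3.2.12}, whereas the paper invokes its own Corollary \ref{H3.2.5} directly for an arbitrary $C$-system of parameters, which makes that step unnecessary.
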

\begin{proof}
Let $x_1,\ldots,x_d$ be a $C$-system of parameters of $M$. By Corollary \ref{H3.2.5} (see also \cite[Corollary 4.2]{CQ11}) we have
$$U_i(M) \cong H^0_{\mathfrak{m}}(M/(x_{i+2},\ldots,x_d)M) \cong
\bigoplus_{j=0}^{d-i-1} H^j_{\mathfrak{m}}(M)^{\binom{d-i-1}{j}},$$
for all $0 \leq i \leq d-1$. So $\dim U_i(M) = 0$ for all $i \le d-1$. Therefore $\delta_{i, \dim U_i(M)}\mathrm{deg}(I,U_i(M))
= 0$ for all $1 \leq i \leq d-1$, and
$$\delta_{0, \dim U_0(M)}\mathrm{deg}(I,U_0(M)) = \sum_{j=0}^{d-1}
\binom{d-1}{j}\ell(H^j_\frak{m}(M)).$$
\end{proof}
We next compute the unmixed degree when $\dim M$ is small.
\begin{proposition}\label{M3.3.11} The following statements hold true.
\begin{enumerate}[{(i)}]\rm
\item {\it If $d = 1$, then the equality $\mathrm{udeg}(I,M) = \mathrm{adeg}(I,M)$ holds.}
\item {\it If $d = 2$, then the equality $\mathrm{udeg}(I,M) = \mathrm{adeg}(I,M) + \ell(H^1_{\frak m}(M/U_M(0)))$ holds.}
\end{enumerate}
\end{proposition}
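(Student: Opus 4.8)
The plan is to expand both $\mathrm{udeg}(I,M)$ and $\mathrm{adeg}(I,M)$ through their definitions and the dimension filtration, and then to insert the splitting isomorphism of Corollary \ref{H3.2.5} taken at cohomological degree $i=0$. Part (i) is essentially formal. When $d=1$ the Cohen-Macaulay deviated sequence of $M$ consists of the single module $U_0(M)\cong U_M(0)$, and since $\dim U_M(0)\le 0$ we have $U_M(0)=H^0_{\frak m}(M)$, a module of finite length. Because the multiplicity of a finite length module with respect to an $\frak m$-primary ideal equals its length, $\delta_{0,\dim U_0(M)}\,\mathrm{deg}(I,U_0(M))=\ell(H^0_{\frak m}(M))$ (reading $0=\ell(0)$ in the degenerate case $U_M(0)=0$), so $\mathrm{udeg}(I,M)=\mathrm{deg}(I,M)+\ell(H^0_{\frak m}(M))$. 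On the other hand, the dimension filtration of $M$ is $0\subset M$ when $\mathrm{depth}\,M>0$ and $H^0_{\frak m}(M)\subset M$ otherwise, so Remark \ref{C3.3.2}(i) gives $\mathrm{adeg}(I,M)=\mathrm{deg}(I,M)+\ell(H^0_{\frak m}(M))$ in both cases. This proves (i).

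For (ii) I would first establish the bookkeeping identity
$$\mathrm{adeg}(I,M)=\mathrm{deg}(I,M)+\ell\bigl(H^0_{\frak m}(M)\bigr)+\delta_{1,\dim U_M(0)}\,\mathrm{deg}(I,U_M(0)),$$
obtained by running Remark \ref{C3.3.2}(i) through the possible shapes of the dimension filtration $D_0\subset\cdots\subset D_t=M$ according to whether $U_M(0)$ is zero, of dimension $0$, or of dimension $1$: the top term always contributes $\mathrm{deg}(I,M)$; a term of dimension $1$ is present exactly when $\dim U_M(0)=1$, and it then equals $U_M(0)$; and the term of dimension $0$ is $H^0_{\frak m}(M)=H^0_{\frak m}(U_M(0))$, contributing $\ell(H^0_{\frak m}(M))$ (which is $0$ when that term is absent). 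On the $\mathrm{udeg}$ side, $U_1(M)\cong U_M(0)$, while $U_0(M)\cong U_{M/xM}(0)$ for a $C$-parameter element $x$ of $M$ (this is independent of $x$ up to isomorphism by Lemma \ref{B3.2.8}); since $\dim U_{M/xM}(0)\le\dim M/xM-1=0$, this module is simply $H^0_{\frak m}(M/xM)$, of finite length. Hence
$$\mathrm{udeg}(I,M)=\mathrm{deg}(I,M)+\ell\bigl(H^0_{\frak m}(M/xM)\bigr)+\delta_{1,\dim U_M(0)}\,\mathrm{deg}(I,U_M(0)).$$

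Subtracting the two displays, the $\delta_{1,\dim U_M(0)}$-terms cancel and we are left with $\mathrm{udeg}(I,M)-\mathrm{adeg}(I,M)=\ell(H^0_{\frak m}(M/xM))-\ell(H^0_{\frak m}(M))$. The decisive step is then Corollary \ref{H3.2.5} applied with $i=0<d-1=1$: since $x\in\frak b(M)^3$, it gives $H^0_{\frak m}(M/xM)\cong H^0_{\frak m}(M)\oplus H^1_{\frak m}(M/U_M(0))$. In particular $H^1_{\frak m}(M/U_M(0))$ is a direct summand of a finite length module, hence itself of finite length (so the right-hand side of (ii) is meaningful), and $\ell(H^0_{\frak m}(M/xM))-\ell(H^0_{\frak m}(M))=\ell(H^1_{\frak m}(M/U_M(0)))$, which is (ii). The only delicate point in the whole argument is the case analysis producing the $\mathrm{adeg}$ identity, together with the careful reading of the Kronecker symbols $\delta_{i,\dim U_i(M)}$ in the degenerate situations $U_M(0)=0$ and $\dim U_M(0)<d-1$; once the splitting of Corollary \ref{H3.2.5} is in hand, everything else is purely formal.
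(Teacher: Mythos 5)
Your proof is correct and rests on exactly the same two ingredients the paper uses: the formula $\mathrm{adeg}(I,M)=\sum_i\mathrm{deg}(I,D_i)$ from Remark \ref{C3.3.2}(i), and the splitting $H^0_{\frak m}(M/xM)\cong H^0_{\frak m}(M)\oplus H^1_{\frak m}(M/U_M(0))$ from Corollary \ref{H3.2.5}. The only difference from the paper is organizational: the paper's proof of (ii) splits into the cases $\dim U_M(0)=0$ and $\dim U_M(0)=1$, invoking Proposition \ref{M3.3.10} (the generalized Cohen--Macaulay formula, itself a consequence of Corollary \ref{H3.2.5}) for the first case and computing directly in the second, whereas you unify the cases by first establishing the bookkeeping identity $\mathrm{adeg}(I,M)=\mathrm{deg}(I,M)+\ell(H^0_{\frak m}(M))+\delta_{1,\dim U_M(0)}\mathrm{deg}(I,U_M(0))$ and then subtracting it from the corresponding expansion of $\mathrm{udeg}(I,M)$. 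Your version is slightly more economical in that it avoids the case distinction and the detour through Proposition \ref{M3.3.10}, while reaching the same conclusion by the same underlying mechanism.
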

\begin{proof} (i) It is clear.\\
(ii) We consider the following two cases.\\
The case $\dim U_M(0) = 0$, we have $M$ is a generalized
Cohen-Macaulay module. Therefore, by Proposition \ref{M3.3.10} we have
\begin{eqnarray*}
\mathrm{udeg}(I,M) &=& \mathrm{deg}(I,M) + \ell(H^0_{\frak m}(M)) +
\ell(H^1_{\frak m}(M))\\
&=& \mathrm{adeg}(I,M) + \ell(H^1_{\frak m}(M/H^0_{\frak m}(M))).
\end{eqnarray*}
The case $\dim U_M(0) = 1$. Consider the dimension filtration $H^0_{\frak m}(M) \subset U_M(0) \subset M$ of $M$. By Remark \ref{C3.3.2} (i) we have
$$\mathrm{adeg}(I,M) =
\mathrm{deg}(I,M) + \mathrm{deg}(I,U_M(0)) + \ell(H^0_{\frak
m}(M)).$$ On the other hand $U_1(M) \cong U_M(0)$ so
$\delta_{1, \dim U_1(M)}\mathrm{deg}(I,U_1(M)) = \mathrm{deg}(I,U_M(0)) $. Let
$x_2$ be a $C$-parameter element of $M$. By Corollary \ref{H3.2.5} we have
$$U_0(M) \cong H^0_{\frak m}(M/x_2M) \cong  H^0_{\frak m}(M) \oplus H^1_{\frak m}(M/U_M(0)).$$
Thus $\delta_{0, \dim U_0(M)}\mathrm{deg}(I,U_0(M)) = \ell (H^0_{\frak m}(M)) + \ell (H^1_{\frak m}(M/U_M(0)))$. Therefore, the equality
$$\mathrm{udeg}(I,M) = \mathrm{adeg}(I,M) + \ell(H^1_{\frak m}(M/U_M(0)))$$
holds. The proof is complete.
\end{proof}
\begin{corollary} Suppose $(R, \frak m)$ is the homomorphic image of a
Gorenstein local ring and $\dim M = 2$. Then $\mathrm{udeg}(I,M)  =
\mathrm{hdeg}(I,M) $.
\end{corollary}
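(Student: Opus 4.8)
The plan is to reduce the asserted equality to an equality of lengths and then prove it by local duality. By Proposition \ref{M3.3.11}~(ii),
$$\mathrm{udeg}(I,M)=\mathrm{adeg}(I,M)+\ell\big(H^1_{\frak m}(M/U_M(0))\big).$$
First dispose of the case $\dim U_M(0)=0$: then $M$ is generalized Cohen-Macaulay and $\mathrm{udeg}(I,M)=\mathrm{hdeg}(I,M)$ follows directly from Proposition \ref{M3.3.10} and Remark \ref{C3.3.5}~(iii), since both sides equal $\mathrm{deg}(I,M)+\sum_{j=0}^{d-1}\binom{d-1}{j}\ell(H^j_{\frak m}(M))$. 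So assume $\dim U_M(0)=1$. Writing the recursion of Definition \ref{D3.3.4} in degree $d=2$ and using local duality over the Gorenstein ring $S$ (of dimension $n$) to identify $\mathrm{Ext}^{n}_S(M,S)\cong H^0_{\frak m}(M)^{\vee}$ --- which is of finite length, so its homological degree is $\ell(H^0_{\frak m}(M))$ --- and $\mathrm{Ext}^{n-1}_S(M,S)\cong H^1_{\frak m}(M)^{\vee}=:N$, of dimension at most $1$, together with the observation that $\mathrm{hdeg}(I,-)$ and $\mathrm{adeg}(I,-)$ coincide on modules of dimension $\le 1$ (immediate from the recursion and Remark \ref{C3.3.2}~(i)), one obtains
$$\mathrm{hdeg}(I,M)=\mathrm{deg}(I,M)+\mathrm{adeg}(I,N)+\ell\big(H^0_{\frak m}(M)\big).$$
Since in our case $\mathrm{adeg}(I,M)=\mathrm{deg}(I,M)+\mathrm{deg}(I,U_M(0))+\ell(H^0_{\frak m}(M))$ by Remark \ref{C3.3.2}~(i) (dimension filtration $H^0_{\frak m}(M)\subseteq U_M(0)\subseteq M$), the corollary reduces to the single identity
$$e_0\big(I,N\big)+\ell\big(H^0_{\frak m}(N)\big)=e_0\big(I,U_M(0)\big)+\ell\big(H^1_{\frak m}(M/U_M(0))\big),\qquad N=H^1_{\frak m}(M)^{\vee}.$$

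To prove this identity, set $\overline M=M/U_M(0)$. Since $\overline M$ is unmixed of dimension $2$ (as $U_M(0)$ is the largest submodule of dimension $<2$) and $R$ is catenary, $\overline M$ is Cohen-Macaulay on the punctured spectrum, hence generalized Cohen-Macaulay; in particular $H^0_{\frak m}(\overline M)=0$ and $H^1_{\frak m}(\overline M)$ has finite length. As $\dim U_M(0)=1$ we have $H^2_{\frak m}(U_M(0))=0$, so applying local cohomology to $0\to U_M(0)\to M\to\overline M\to 0$ gives a short exact sequence $0\to H^1_{\frak m}(U_M(0))\to H^1_{\frak m}(M)\to H^1_{\frak m}(\overline M)\to 0$, and Matlis duality yields
$$0\to H^1_{\frak m}(\overline M)^{\vee}\to N\to H^1_{\frak m}(U_M(0))^{\vee}\to 0,$$
in which the left-hand term has finite length $\ell(H^1_{\frak m}(\overline M))$, while the right-hand term is the canonical module of the $1$-dimensional module $U_M(0)$, hence satisfies $(S_1)$ and has Hilbert-Samuel multiplicity $e_0(I,U_M(0))$. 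Reading off multiplicities from this sequence gives $e_0(I,N)=e_0(I,U_M(0))$, and reading off $H^0_{\frak m}(-)$ gives $H^0_{\frak m}(N)\cong H^1_{\frak m}(\overline M)^{\vee}$, so $\ell(H^0_{\frak m}(N))=\ell(H^1_{\frak m}(\overline M))$. This is exactly the required identity, and the corollary follows.

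The main obstacle is the careful bookkeeping of the finite-length contributions: one must be sure that $H^1_{\frak m}(M/U_M(0))$ is genuinely of finite length and that the ``pure'' summand $H^1_{\frak m}(U_M(0))^{\vee}$ contributes nothing to $H^0_{\frak m}(N)$ while contributing precisely $e_0(I,U_M(0))$ to the multiplicity of $N$ --- both points rest on the unmixedness of $M/U_M(0)$ together with the catenarity of $R$. Everything else is routine local-duality and multiplicity computation (after passing to $\widehat R$ if one wishes $S$ complete, which changes none of the quantities involved). If one prefers, after taking $\dim S=2$ the whole computation can instead be phrased as the identity $\ell\big(\mathrm{Ext}^2_S(\mathrm{Ext}^1_S(M,S),S)\big)=\ell\big(H^1_{\frak m}(M/U_M(0))\big)$ and then matched against Remark \ref{C3.3.5}~(iv).
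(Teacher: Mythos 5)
Your proof is correct and essentially follows the paper's argument, just working on the Matlis-dual (local cohomology) side of the sequence induced by $0\to U_M(0)\to M\to M/U_M(0)\to 0$ rather than directly with the Ext groups over $S$. The paper first reduces to $\dim S=2$, invokes Remark~\ref{C3.3.5}(iv) to reduce the claim to $\ell(H^1_{\frak m}(M/U_M(0)))=\ell(\mathrm{Ext}^2_S(\mathrm{Ext}^1_S(M,S),S))$, and then closes with local duality, the exact sequence $0\to \mathrm{Ext}^1_S(M/U_M(0),S)\to \mathrm{Ext}^1_S(M,S)\to \mathrm{Ext}^1_S(U_M(0),S)\to 0$, and the $(S_2)$-property of $\mathrm{Ext}^1_S(U_M(0),S)$ (cited from Schenzel), which is exactly the dual of your computation, as you yourself note at the end.
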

\begin{proof} Without loss of generality we may assume that $(R, \frak
m)$ is a Gorenstein local ring of dimension two. If $U_M(0) = H^0_{\frak
m}(M)$ we have that $M$ is generalized Cohen-Macaulay, and the claim follows from Proposition \ref{M3.3.10} and  Remark \ref{C3.3.5} (iii). Suppose $\dim
U_M(0) = 1$, by Proposition \ref{M3.3.11} and Remark \ref{C3.3.5} (iv)
we need only to show that
$$\ell (H^1_{\frak m}(M/U_M(0))) =
\ell(\mathrm{Ext}^2_R(\mathrm{Ext}^1_R(M, R),R)).$$
Since
$\mathrm{Ass}\, M/U_M(0) = \{\frak p \mid \frak p\in \mathrm{Ass}\, M,
\dim R/\frak p = 2 \}$ we have that
$\mathrm{Ext}^1_R(M/U_M(0),R)$ is a module of finite length, and
$\ell(\mathrm{Ext}^1_R(M/U_M(0),R)) = \ell(H^1_{\frak m}(M/U_M(0)))$
by the local duality theorem. By the local duality theorem again we have
$\ell(\mathrm{Ext}^2_R(\mathrm{Ext}^1_R(M, R),R)) = \ell
(H^0_{\frak m}(\mathrm{Ext}^1_R(M, R)))$. So it is enough to prove that
$$\ell(\mathrm{Ext}^1_R(M/U_M(0),R)) = \ell (H^0_{\frak
m}(\mathrm{Ext}^1_R(M, R))).$$
Indeed, consider the short exact sequence
$$0 \to U_M(0) \to M \to M/U_M(0) \to 0.$$
Since $\dim U_M(0) = 1$ and $\mathrm{depth}M/U_M(0)>0$, $\mathrm{Hom}_R(U_M(0),R) = \mathrm{Ext}^2_R(M/U_M(0),R) = 0$. So we have the following short exact sequence
$$0 \to \mathrm{Ext}^1_R(M/U_M(0),R) \to \mathrm{Ext}^1_R(M,R) \to \mathrm{Ext}^1_R(U_M(0),R) \to 0.$$
By \cite[Lemma 1.9]{Sch98-1} (v) we have that $\mathrm{Ext}^1_R(U_M(0),R)$ is $(S_2)$, and hence it is a Cohen-Macaulay module of dimension one. Thus  $H^0_{\frak
m}(\mathrm{Ext}^1_R(U_M(0),R)) = 0$. Therefore
$$\mathrm{Ext}^1_R(M/U_M(0),R)) = H^0_{\frak
m}(\mathrm{Ext}^1_R(M/U_M(0),R))) \cong H^0_{\frak
m}(\mathrm{Ext}^1_R(M, R)).$$ The proof is complete.
\end{proof}
In the following we prove the third condition of Definition \ref{D3.3.3}. Moreover, we also give a characterization of sequentially Cohen-Macaulay modules in terms of unmixed degrees.
\begin{theorem}\label{D3.3.8} Let $M$ be a finitely generated $R$-module of dimension
$d$. We have inequalities
$$\mathrm{deg}(I, M) \leq \mathrm{adeg}(I, M) \leq \mathrm{udeg}(I, M).$$
Furthermore
\begin{enumerate}[{(i)}]\rm
\item {\it The equality $\mathrm{deg}(I, M) = \mathrm{udeg}(I, M)$ holds if and only if $M$ is a Cohen-Macaulay module.}
\item {\it The equality $\mathrm{adeg}(I, M) = \mathrm{udeg}(I, M)$ holds if and only if $M$ is a sequentially
Cohen-Macaulay module.}
\end{enumerate}
\end{theorem}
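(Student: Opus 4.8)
The plan is to compare the three degrees through the dimension filtration $D_0\subset D_1\subset\cdots\subset D_t=M$ of $M$, writing $d_i=\dim D_i$, so that the $d_i$ increase strictly to $d_t=d$. By Remark \ref{C3.3.2}(i), $\mathrm{adeg}(I,M)=\sum_{i=0}^{t}\mathrm{deg}(I,D_i)=\mathrm{deg}(I,M)+\sum_{i<t}\mathrm{deg}(I,D_i)$, so $\mathrm{deg}(I,M)\le\mathrm{adeg}(I,M)$, with equality precisely when $U_M(0)=0$, as recorded there. For the second inequality I would invoke Remark \ref{Q3.2.14}: for each $i<t$ the piece $D_i$ embeds in $U_{d_i}(M)$, and since $\dim D_i=d_i$ while $\dim U_{d_i}(M)\le d_i$, this forces $\dim U_{d_i}(M)=d_i$, hence $\delta_{d_i,\dim U_{d_i}(M)}=1$ and $\mathrm{deg}(I,D_i)\le\mathrm{deg}(I,U_{d_i}(M))$ by additivity of degree along the top-dimensional part of a short exact sequence. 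Since $d_0,\dots,d_{t-1}$ are distinct indices in $\{0,\dots,d-1\}$ and every summand of $\mathrm{udeg}$ is nonnegative,
\begin{align*}
\mathrm{adeg}(I,M)-\mathrm{deg}(I,M)&=\sum_{i<t}\mathrm{deg}(I,D_i)\le\sum_{i<t}\mathrm{deg}(I,U_{d_i}(M))\\
&\le\sum_{j=0}^{d-1}\delta_{j,\dim U_j(M)}\mathrm{deg}(I,U_j(M))=\mathrm{udeg}(I,M)-\mathrm{deg}(I,M),
\end{align*}
whence $\mathrm{adeg}(I,M)\le\mathrm{udeg}(I,M)$. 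Tracing the two bounds, equality holds if and only if (1) $\dim(U_{d_i}(M)/D_i)<d_i$ for all $i<t$, and (2) $\dim U_j(M)<j$ for every $j\le d-1$ with $j\notin\{d_0,\dots,d_{t-1}\}$.

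For part (i): since $\mathrm{deg}\le\mathrm{adeg}\le\mathrm{udeg}$, the equality $\mathrm{deg}(I,M)=\mathrm{udeg}(I,M)$ forces each summand $\delta_{j,\dim U_j(M)}\mathrm{deg}(I,U_j(M))$ to vanish, and as a nonzero module has positive degree this is equivalent to $\dim U_j(M)<j$ for all $j$. I would close the loop using Proposition \ref{M3.2.13}: for a $C$-system of parameters $\underline{x}$ the coefficient $e(x_1,\dots,x_j;U_j(M))$ of $I_{M,\underline{x}}(\underline{n})$ vanishes exactly when $\dim U_j(M)<j$, so $\dim U_j(M)<j$ holds for all $j$ if and only if $I_{M,\underline{x}}\equiv 0$, i.e. $\ell(M/(\underline{x}^{\underline{n}})M)=e(\underline{x}^{\underline{n}};M)$ for all $\underline{n}$; by the classical length--multiplicity criterion this means $\underline{x}$ is an $M$-regular sequence, hence $\depth M=d$ and $M$ is Cohen-Macaulay. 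The converse is immediate, since then $U_i(M)=0$ for all $i$.

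For part (ii): by the above, $\mathrm{adeg}(I,M)=\mathrm{udeg}(I,M)$ is equivalent to (1) and (2). If $M$ is sequentially Cohen-Macaulay, Proposition \ref{M3.2.15} gives $U_j(M)=D_i$ whenever $d_i\le j<d_{i+1}$; taking $j=d_i$ yields (1), while for $d_i<j<d_{i+1}$ we get $\dim U_j(M)=d_i<j$, which is (2). Conversely, assuming (1) and (2), pick a $C$-system of parameters $\underline{x}$, which is a $dd$-sequence by Proposition \ref{M3.2.13}. Condition (2) annihilates all terms of $I_{M,\underline{x}}(\underline{n})=\sum_j n_1\cdots n_j\,e(x_1,\dots,x_j;U_j(M))$ with $j\notin\{d_0,\dots,d_{t-1}\}$, while condition (1), together with additivity of the Serre multiplicity along $0\to D_i\to U_{d_i}(M)\to U_{d_i}(M)/D_i\to 0$ and $\dim(U_{d_i}(M)/D_i)<d_i$, reduces each remaining coefficient to $e(x_1,\dots,x_{d_i};D_i)$. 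Hence $I_{M,\underline{x}}(\underline{n})=\sum_{i<t}n_1\cdots n_{d_i}\,e(x_1,\dots,x_{d_i};D_i)$, and $M$ is sequentially Cohen-Macaulay by \cite[Theorem 4.2]{CC07-2}.

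The step I expect to be the main obstacle is the converse in (ii): the numerical identity $\mathrm{adeg}=\mathrm{udeg}$ yields only $\mathrm{deg}(I,D_i)=\mathrm{deg}(I,U_{d_i}(M))$, i.e. agreement of top-dimensional parts, not the module identity $D_i=U_{d_i}(M)$ that a direct appeal to Proposition \ref{M3.2.15} would need; bridging this gap is precisely why the proof must route through the length function $I_{M,\underline{x}}(\underline{n})$ and the numerical characterization of sequential Cohen-Macaulayness rather than arguing with the filtration directly.
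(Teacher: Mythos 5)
Your proposal is correct and follows essentially the same route as the paper: both prove $\mathrm{adeg}\le\mathrm{udeg}$ via $D_i\subseteq U_{d_i}(M)$ from Remark~\ref{Q3.2.14}, and both prove the converse of~(ii) by translating the numerical equalities into the vanishing/matching of the Serre multiplicity coefficients in $I_{M,\underline{x}}(\underline{n})$ via Proposition~\ref{M3.2.13} and then invoking the numerical characterization of sequential Cohen-Macaulayness from \cite[Theorem~4.2]{CC07-2}. The only (cosmetic) divergence is in part~(i): you argue directly that $\mathrm{deg}=\mathrm{udeg}$ forces $I_{M,\underline{x}}\equiv 0$, hence $\underline{x}$ is regular and $M$ is Cohen--Macaulay, whereas the paper deduces~(i) from~(ii) by noting that $\mathrm{deg}=\mathrm{adeg}$ means $U_M(0)=0$ and a sequentially Cohen--Macaulay module with trivial dimension filtration is Cohen--Macaulay; your closing remark about why one must route through the length function rather than appeal directly to Proposition~\ref{M3.2.15} correctly identifies the key point of the argument.
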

\begin{proof} The first inequality is clear. Let
 $$\mathcal{D}: D_0 \subset D_1 \subset \cdots \subset D_t =M$$
be the dimension filtration of $M$ with $d_i = \dim D_i $ for all $i \leq t$. Recall that
$$\mathrm{adeg}(I,M) = \mathrm{deg}(I,M) + \sum_{i=0}^{t-1}\mathrm{deg}(I,D_i).$$
For all $i<t$ by Remark \ref{Q3.2.14} we have $D_i \subseteq
U_{d_i}(M)$. So $\dim U_{d_i}(M) = d_i $ and then
$$\mathrm{deg}(I,D_i) \leq
\mathrm{deg}(I,U_{d_i}(M)) = \delta_{d_i, \dim U_{d_i}(M)}\mathrm{deg}(I,
U_{d_i}(M)).$$
Thus $\mathrm{adeg}(I,M) \leq \mathrm{udeg}(I,M)$. \\
We have (i) follows from (ii), so it is enough to prove (ii). If $M$ is sequentially Cohen-Macaulay, then by Proposition
\ref{M3.2.15} we have that $\mathrm{adeg}(I,M) = \mathrm{udeg}(I,M)$.\\
Conversely, suppose
$\mathrm{adeg}(I,M) = \mathrm{udeg}(I,M)$. We have
$$\mathrm{deg}(I,D_i) = \mathrm{deg}(I,U_{d_i}(M)) $$
for all $i<t$, and
$$\delta_{j, \dim U_j(M)}\mathrm{deg}(I,U_j(M)) = 0 \quad \quad \quad \quad \quad \quad (4)$$ for all $i< t$ and $d_i < j
< d_{i+1}$. Let $\underline{x} = x_1, \ldots, x_d$ be a $C$-system of parameters of $M$. Thus
$$e(x_1, \ldots, x_{d_i}; D_i) = \mathrm{deg}((\underline{x}), D_i) = \mathrm{deg}((\underline{x}), U_{d_i}(M)) = e(x_1, \ldots, x_{d_i}; U_{d_i}(M))$$
for all $i < t$. By $(4)$ we have $\dim U_j(M)<j$  for all $d_i < j
< d_{i+1}$ and $i< t$, so 
$$e(x_1, \ldots, x_j; U_j(M)) = 0$$
for all $d_i < j
< d_{i+1}$ and $i< t$. By Proposition \ref{M3.2.13} we have
$$I_{M,\underline{x}}(\underline{n}) = \sum_{j=0}^{d-1}n_1\ldots n_j e(x_1,\ldots,x_j;U_j(M)).$$
for all $n_1, \ldots, n_d \ge 1$. Thus we have
$$I_{M,\underline{x}}(\underline{n}) = \sum_{i=0}^{t-1}n_1\ldots n_{d_i} e(x_1,\ldots,x_{d_i};D_i).$$
for all $n_1, \ldots, n_d \ge 1$. Hence $M$ is a sequentially Cohen-Macaulay module by \cite[Theorem 4.2]{CC07-2}. 
The proof is complete.
\end{proof}
In order to prove the Bertini rule of Definition \ref{D3.3.3}, we will show that the unmixed degree has good behavior by passing to the quotient modules regarding certain {\it superficial} elements.
\begin{definition}\rm An element $x \in I \setminus \frak
mI$ is called a {\it superficial} element of $M$ with respect to  $I$ if
there exists a positive integer $c$ such that
$$(I^{n+1}M:x) \cap I^cM = I^nM$$
for all $n \geq c$.
\end{definition}
\begin{remark}\label{C3.3.14}\rm
\begin{enumerate}[{(i)}]
\item Let $G_I(R) = \oplus_{n \geq 0} I^n/I^{n+1}$ be the associated graded ring of $R$ with respect to $I$, and $G_I(M)=
\oplus_{n \geq 0} I^nM/I^{n+1}M$ the graded $G_I(R)$-module. Set
$(G_I(R))_+ = \oplus_{n \geq 1} I^n/I^{n+1}$. Then $x$ is a superficial element of $M$ with respect to $I$ if and only if the {\it initial}
$x^*$ of $x$ in $G_I(R)$ is a $(G_I(R))_+$-filter regular element of
$G_I(M)$, i.e. $\ell (0:_{G_I(M)}x^*) < \infty$ (notice that in our context $I$ is $\frak m$-primary). Moreover, if $x$ is a superficial element, then it is an $I$-filter regular element of $M$.
\item A superficial element of $M$
with respect to $I$ always exists if the residue field $R/\frak m$ is infinite, a hypothesis which never cause us any problem because we can replace $R$ by the local ring
$R[X]_{\frak mR[X]}$, where $X$ is an indeterminate.
\item (cf. \cite[22.6]{N62}) Let $x$ be a superficial element of $M$
with respect to $I$. For $n \gg 0$ we have $I^{n+1}M:_Mx = 0:_Mx + I^n
M$ so
$$\ell(M/(I^{n+1}+(x))M) = \ell(M/I^{n+1}M) - \ell(M/I^nM) + \ell(0:_Mx)$$
for all $n \gg 0$.
\item Let $x$ be a superficial element of $M$
with respect to $I$. By (iii) we have $\mathrm{deg}(I,M/xM) =
\mathrm{deg}(I,M) $ if $d \geq 2$, and $\ell(M/xM) =
\mathrm{deg}(I,M/xM) = \mathrm{deg}(I,M) + \ell(0:_Mx)$ if $d=1$.
\end{enumerate}
\end{remark}

We need some lemmas before proving the Bertini rule of unmixed degrees.
\begin{lemma}\label{B3.3.15} Let $M$ be a finitely generated $R$-module of dimension $d \geq
2$. Let $x$ be a parameter element of $M$ such that $x$ is a superficial element of
$U_M(0)$ with respect to $I$ and $x \notin \frak p$ for all
$\frak p \in \mathrm{Ass}\, U_{d-2}(M) \setminus \{\frak m\}$. Then
$$\delta_{d-2, \dim U_{M/xM}(0)}\mathrm{deg}(I,U_{M/xM}(0)) = \delta_{d-1, \dim U_M(0)}\mathrm{deg}(I,U_{M}(0))$$
if $d \geq 3$, and
$$\delta_{0, \dim U_{M/xM}(0)}\mathrm{deg}(I,U_{M/xM}(0)) = \delta_{1, \dim U_M(0)}\mathrm{deg}(I,U_{M}(0)) +
\ell(0:_{H^0_{\frak m}(M)}x) + \ell(0:_{H^1_{\frak
m}(M/U_{M}(0))}x)$$ if $d=2$.
\end{lemma}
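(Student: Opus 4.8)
The plan is to reduce both statements to the short exact sequence supplied by Proposition~\ref{M3.2.19}. First I would verify its hypotheses: by Remark~\ref{C3.3.14}(i) a superficial element of $U_M(0)$ with respect to the $\frak m$-primary ideal $I$ is an $I$-filter regular element of $U_M(0)$, hence $x\notin\frak p$ for all $\frak p\in\mathrm{Ass}\,U_M(0)\setminus\{\frak m\}$; combined with the assumed avoidance of $\mathrm{Ass}\,U_{d-2}(M)\setminus\{\frak m\}$ this is exactly what Proposition~\ref{M3.2.19} needs. Writing $\overline M=M/U_M(0)$, I therefore obtain
$$0\to U_M(0)/xU_M(0)\to U_{M/xM}(0)\to H^0_{\frak m}(\overline M/x\overline M)\to 0.$$
I will use three elementary facts: $\overline M$ is unmixed of dimension $d$, so $H^0_{\frak m}(\overline M)=0$ and $x$, being a parameter element of $M$ and thus avoiding $\mathrm{Assh}\,M=\mathrm{Ass}\,\overline M$, is a nonzerodivisor on $\overline M$; $U_{M/xM}(0)$ has dimension at most $\dim M/xM-1=d-2$; and if $\dim U_M(0)>0$ then filter regularity makes $x$ a parameter element of $U_M(0)$, so $\dim U_M(0)/xU_M(0)=\dim U_M(0)-1$ and, by Remark~\ref{C3.3.14}(iv) applied to $U_M(0)$, $\mathrm{deg}(I,U_M(0)/xU_M(0))=\mathrm{deg}(I,U_M(0))$ once $\dim U_M(0)\ge 2$.

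For $d\ge 3$ I would feed the displayed sequence into additivity of multiplicity along exact sequences. If $\dim U_M(0)=d-1$, then $\dim U_M(0)/xU_M(0)=d-2$ while $\dim H^0_{\frak m}(\overline M/x\overline M)\le 0<d-2$, so $U_{M/xM}(0)$ has dimension $d-2$ and $\mathrm{deg}(I,U_{M/xM}(0))=\mathrm{deg}(I,U_M(0)/xU_M(0))=\mathrm{deg}(I,U_M(0))$; both Kronecker symbols equal $1$ and the identity holds. If instead $\dim U_M(0)\le d-2$, then $\dim U_M(0)/xU_M(0)\le d-3$ and $\dim H^0_{\frak m}(\overline M/x\overline M)\le 0\le d-3$, whence $\dim U_{M/xM}(0)\le d-3<d-2$; now both Kronecker symbols vanish and both sides are $0$.

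For $d=2$ every module in the sequence has finite length, so $\delta_{0,\dim U_{M/xM}(0)}\mathrm{deg}(I,U_{M/xM}(0))=\ell(U_{M/xM}(0))$, and additivity of length gives $\ell(U_{M/xM}(0))=\ell(U_M(0)/xU_M(0))+\ell(H^0_{\frak m}(\overline M/x\overline M))$. For the first summand: if $\dim U_M(0)=1$, Remark~\ref{C3.3.14}(iv) with $d=1$ gives $\ell(U_M(0)/xU_M(0))=\mathrm{deg}(I,U_M(0))+\ell(0:_{U_M(0)}x)$, and since $x$ is superficial for $U_M(0)$ the kernel $0:_{U_M(0)}x$ has finite length, hence lies in $H^0_{\frak m}(U_M(0))=H^0_{\frak m}(M)$, so $0:_{U_M(0)}x=0:_{H^0_{\frak m}(M)}x$; if $\dim U_M(0)\le 0$ then $U_M(0)=H^0_{\frak m}(M)$ and a direct length count in $0\to 0:_{U_M(0)}x\to U_M(0)\xrightarrow{x}U_M(0)\to U_M(0)/xU_M(0)\to 0$ gives $\ell(U_M(0)/xU_M(0))=\ell(0:_{H^0_{\frak m}(M)}x)$. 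In both cases $\ell(U_M(0)/xU_M(0))=\delta_{1,\dim U_M(0)}\mathrm{deg}(I,U_M(0))+\ell(0:_{H^0_{\frak m}(M)}x)$. For the second summand, the long exact local cohomology sequence of $0\to\overline M\xrightarrow{x}\overline M\to\overline M/x\overline M\to 0$ together with $H^0_{\frak m}(\overline M)=0$ yields $H^0_{\frak m}(\overline M/x\overline M)\cong 0:_{H^1_{\frak m}(\overline M)}x=0:_{H^1_{\frak m}(M/U_M(0))}x$. Adding the two contributions gives precisely the claimed $d=2$ formula.

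The bulk of the work is the (routine but error-prone) dimension and Kronecker-symbol bookkeeping in the $d\ge 3$ case and the split on $\dim U_M(0)$ for $d=2$; the one genuinely structural point I expect to need care with is the identification $0:_{U_M(0)}x=0:_{H^0_{\frak m}(M)}x$, which rests on the fact that the kernel of multiplication by a superficial element on a module of positive dimension has finite length (itself a consequence of Remark~\ref{C3.3.14}(i) and $\bigcap_n I^nN=0$).
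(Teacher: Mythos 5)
Your proposal matches the paper's proof essentially step for step: invoke Proposition~\ref{M3.2.19} to get the short exact sequence $0\to U_M(0)/xU_M(0)\to U_{M/xM}(0)\to H^0_{\frak m}(\overline M/x\overline M)\to 0$, split on $\dim U_M(0)$, use Remark~\ref{C3.3.14}(iv) to control $\deg(I,U_M(0)/xU_M(0))$ for $d\ge 3$, and for $d=2$ use additivity of length, the same two subcases, and the local-cohomology long exact sequence to identify $H^0_{\frak m}(\overline M/x\overline M)$ with $0:_{H^1_{\frak m}(\overline M)}x$. The only place you go beyond the paper is the explicit justification that $0:_{U_M(0)}x=0:_{H^0_{\frak m}(M)}x$ via finite length of the kernel of a superficial element, which the paper leaves implicit; that addition is correct and harmless.
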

\begin{proof} Put $\overline{M} = M/U_M(0)$, by Proposition
\ref{M3.2.19} we have the short exact sequence
$$0 \to U_M(0)/xU_M(0) \to U_{M/xM}(0) \to H^0_{\frak m}(\overline{M}/x\overline{M}) \to 0.$$
The case $d \geq 3$, if $\dim U_M(0) < d-1$ then $\dim
U_M(0)/xU_M(0) < d-2$. Thus $\dim U_{M/xM}(0) < d-2$. Hence
$$\delta_{d-2, \dim U_{M/xM}(0)}\mathrm{deg}(I,U_{M/xM}(0))  = 0 =
\delta_{d-1, \dim U_M(0)}\mathrm{deg}(I,U_{M}(0)).$$
If $\dim U_M(0) = d-1$ we have
$\dim U_{M/xM}(0) = d-2 > 0$. By the above short exact sequence we have $\mathrm{deg}(I,U_{M/xM}(0)) =
\mathrm{deg}(I,U_M(0)/xU_M(0))$. By Remark \ref{C3.3.14} (iv) we have $\mathrm{deg}(I,U_{M}(0)) =
\mathrm{deg}(I,U_M(0)/xU_M(0))$. Thus we also have
$$\delta_{d-2, \dim U_{M/xM}(0)}\mathrm{deg}(I,U_{M/xM}(0))  =\delta_{d-1, \dim U_M(0)}\mathrm{deg}(I,U_{M}(0)).$$
The case $d=2$, we have $U_{M/xM}(0)$ has finite length.
Therefore by Remark \ref{C3.3.14} (iv) we have
$$\delta_{0, \dim U_{M/xM}(0)}\mathrm{deg}(I,U_{M/xM}(0)) = \ell(U_{M/xM}(0)) =
\ell(U_M(0)/xU_M(0)) + \ell(H^0_{\frak
m}(\overline{M}/x\overline{M})).$$ If $\dim U_M(0) = 1$, then by Remark
\ref{C3.3.14} (iv) we have
$$\ell(U_M(0)/xU_M(0)) = \mathrm{deg}(I,U_{M}(0))
+ \ell(0:_{U_M(0)}x) = \delta_{1, \dim U_M(0)}\mathrm{deg}(I,U_{M}(0))  +
\ell(0:_{H^0_{\frak m}(M)}x).$$ If $\dim U_M(0) = 0$, then
 we have $U_{M}(0) = H^0_{\frak
m}(M)$ and hence $\delta_{1, \dim U_M(0)}\mathrm{deg}(I,U_{M}(0)) = 0 $. Moreover one can check that $\ell(H^0_{\frak m}(M)/xH^0_{\frak m}(M)) = \ell(0:_{H^0_{\frak m}(M)}x)$. Thus we always have
$$\ell(U_M(0)/xU_M(0)) = \delta_{1, \dim U_M(0)}\mathrm{deg}(I,U_{M}(0)) +
\ell(0:_{H^0_{\frak m}(M)}x).$$ On the other hand, the short exact sequence
$$0 \to \overline{M} \overset{x}{\to} \overline{M} \to \overline{M}/x\overline{M} \to 0$$
induces the exact sequence of local cohomology modules
$$0 \to H^0_{\frak m}(\overline{M}/x\overline{M}) \to H^1_{\frak m}(\overline{M}) \overset{x}{\to}
H^1_{\frak m}(\overline{M}).$$ 
Therefore $\ell(H^0_{\frak
m}(\overline{M}/x\overline{M})) = \ell(0:_{H^1_{\frak
m}(\overline{M})}x)$. Hence
$$\delta_{0, \dim U_{M/xM}(0)}\mathrm{deg}(I,U_{M/xM}(0)) = \delta_{1, \dim U_M(0)}\mathrm{deg}(I,U_{M}(0)) +
\ell(0:_{H^0_{\frak m}(M)}x) + \ell(0:_{H^1_{\frak
m}(M/U_{M}(0))}x).$$
The proof is complete.
\end{proof}
We need one more technical lemma.
\begin{lemma}\label{B3.3.16} Let $M$ be a finitely generated $R$-module of dimension $d \geq 2$. Let $x$ be a parameter element of $M$ such that
 $x \notin \frak p$ for all $\frak p \in
\mathrm{Ass}\, U_M(0) \setminus \{\frak m\}$. Then we can choose a $C$-parameter element $x_d$ of $M$ such that $x$ is a parameter element of $M/x_dM$.
\end{lemma}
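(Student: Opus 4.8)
The plan is to produce an element $x_d \in \mathfrak b(M)^3$ which is simultaneously a parameter element of $M$ and of $M/xM$. Granting this, $x_d$ is a $C$-parameter element of $M$ by definition, and from $\dim M/x_dM = \dim M/xM = d-1$ and $\dim M/(x,x_d)M = \dim M/xM - 1 = d-2$ we get $\dim (M/x_dM)/x(M/x_dM) = d-2 = \dim M/x_dM - 1$, so $x$ is a parameter element of $M/x_dM$. By the prime avoidance theorem it therefore suffices to prove that $\mathfrak b(M)^3 \nsubseteq \mathfrak p$ — equivalently $\mathfrak b(M) \nsubseteq \mathfrak p$ — for every $\mathfrak p \in \mathrm{Assh}\,M \cup \mathrm{Assh}(M/xM)$.

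For $\mathfrak p \in \mathrm{Assh}\,M$ this is clear, since $\dim R/\mathfrak p = d$ while $\dim R/\mathfrak b(M) \le \dim R/\mathfrak a(M) = p(M) < d$. So fix $\mathfrak p \in \mathrm{Ass}(M/xM)$ with $\dim R/\mathfrak p = d-1$; note that $x \in \mathfrak p$ and, since $d \ge 2$, that $\mathfrak p \ne \mathfrak m$. Put $\overline M = M/U_M(0)$. Because $\mathrm{Ass}\,\overline M = \mathrm{Assh}\,M$ and $x$ is a parameter element of $M$, the element $x$ is $\overline M$-regular; hence $U_M(0):_M x = U_M(0)$, so $U_M(0)\cap xM = xU_M(0)$ and the inclusion $U_M(0)\hookrightarrow M$ induces a short exact sequence
$$0 \longrightarrow U_M(0)/xU_M(0) \longrightarrow M/xM \longrightarrow \overline M/x\overline M \longrightarrow 0.$$
The left-hand term has dimension at most $d-2$: if $\dim U_M(0)=0$ this is because $d\ge 2$, and if $\dim U_M(0)\ge 1$ then the hypothesis on $x$ forces $x$ to be a parameter element of $U_M(0)$ (as $\mathrm{Assh}\,U_M(0)$ does not contain $\mathfrak m$), so $\dim U_M(0)/xU_M(0) = \dim U_M(0)-1 \le d-2$. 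Consequently $\mathfrak p \in \mathrm{Ass}(\overline M/x\overline M)$.

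Now localize at $\mathfrak p$. First I claim $U_M(0)_{\mathfrak p}=0$: otherwise some $\mathfrak q \in \mathrm{Ass}\,U_M(0)$ satisfies $\mathfrak q \subseteq \mathfrak p$, and since $\dim R/\mathfrak q \le \dim U_M(0) \le d-1 = \dim R/\mathfrak p$ we must have $\mathfrak q = \mathfrak p$; but then $\mathfrak p \in \mathrm{Ass}\,U_M(0)\setminus\{\mathfrak m\}$, contradicting $x \in \mathfrak p$ together with the hypothesis on $x$. Hence $M_{\mathfrak p}\cong(\overline M)_{\mathfrak p}$. Since $\overline M$ is unmixed of dimension $d$ and $R$ is catenary (being a homomorphic image of a Cohen--Macaulay ring), $\dim M_{\mathfrak p} = \dim(\overline M)_{\mathfrak p} = d - \dim R/\mathfrak p = 1$; and since $x$ is $(\overline M)_{\mathfrak p}$-regular while $\mathfrak p \in \mathrm{Ass}(\overline M/x\overline M)$, we get $\mathrm{depth}\,M_{\mathfrak p} = \mathrm{depth}(\overline M)_{\mathfrak p} = 1$, so $M_{\mathfrak p}$ is Cohen--Macaulay with $\dim M_{\mathfrak p} + \dim R/\mathfrak p = d$. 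By Faltings' annihilator theorem (Remark \ref{C3.1.2}(iii)), $\mathfrak p \notin V(\mathfrak a(M))$, and since $\mathfrak a(M) \subseteq \mathfrak b(M)$ this gives $\mathfrak b(M) \nsubseteq \mathfrak p$, completing the argument.

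The main obstacle is the localization step: one has to rule out that $\mathfrak p$ sits in the support of $U_M(0)$ — this is precisely the role of the hypothesis that $x$ avoids every associated prime of $U_M(0)$ except $\mathfrak m$ — and then recognize $M_{\mathfrak p}$ as a one-dimensional Cohen--Macaulay localization of $\overline M$, which is what makes Faltings' theorem applicable and forces $\mathfrak a(M) \nsubseteq \mathfrak p$. Everything else — the reduction via the short exact sequence, the prime avoidance, and the final check that the chosen $x_d$ has the required properties — is routine.
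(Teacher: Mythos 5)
Your proof is correct, and it takes a genuinely different route from the paper's. Both arguments reduce to showing that $\mathfrak{b}(M) \nsubseteq \mathfrak{p}$ for every $\mathfrak{p} \in \mathrm{Assh}\,M \cup \mathrm{Assh}(M/xM)$ and then invoking prime avoidance (and your final step deducing that $x$ is a parameter element of $M/x_dM$ from dimension counts is exactly right). Where you diverge is in how to handle a prime $\mathfrak{q} \in \mathrm{Assh}(M/xM)$. The paper splits into cases: when $\dim U_M(0) < d-1$ it uses the dimension bound $\dim R/\mathfrak{b}(M) \le d-2$ (a consequence of Remark \ref{C3.1.2}(ii)); when $\dim U_M(0) = d-1$ it derives the ideal inclusion $\sqrt{\mathfrak{b}(M)} \supseteq \sqrt{\mathrm{Ann}\,U_M(0)\cdot \mathfrak{b}(\overline{M})}$ from the long exact sequence of local cohomology attached to $0 \to U_M(0) \to M \to \overline{M} \to 0$, and then shows neither factor lies in $\mathfrak{q}$ (the hypothesis on $x$ rules out $\mathrm{Ann}\,U_M(0) \subseteq \mathfrak{q}$, while $\dim R/\mathfrak{b}(\overline{M}) \le d-2$ rules out the other). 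You instead argue uniformly: you localize at $\mathfrak{q}$, use the hypothesis on $x$ to kill $U_M(0)_{\mathfrak{q}}$, identify $M_{\mathfrak{q}} \cong \overline{M}_{\mathfrak{q}}$ as a one-dimensional Cohen--Macaulay module with $\dim M_{\mathfrak{q}} + \dim R/\mathfrak{q} = d$, and then apply Faltings' annihilator theorem (Remark \ref{C3.1.2}(iii)) to conclude $\mathfrak{a}(M) \nsubseteq \mathfrak{q}$. Your argument is conceptually cleaner in that it explains geometrically why $\mathfrak{b}(M)$ must avoid $\mathfrak{q}$ — the module is locally Cohen--Macaulay there of the right codimension — at the cost of leaning on the deeper Faltings theorem, whereas the paper's proof uses only the elementary dimension bounds on the annihilator ideals.
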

\begin{proof}
 If $\dim U_M(0) < d-1$ then $\dim
R/\frak b(M) \leq d-2$ by Remark \ref{C3.1.2} (ii). Therefore we can choose a $C$-parameter element
$x_d$ such that $x$ and $x_d$
is a part of a system of parameters of $M$ by the prime avoidance theorem. Hence $x$ is a parameter element of
$M/x_dM$.\\
We now assume that $\dim U_M(0) = d-1$. Set $\overline{M} = M/U_M(0)$. The short exact sequence
$$0 \to U_M(0) \to M \to \overline{M} \to 0.$$
induces the exact sequence of local cohomology modules
$$ \cdots \to H^i_{\frak m}(U_M(0)) \to H^i_{\frak m}(M) \to H^i_{\frak m}(\overline{M}) \to \cdots.$$
Hence $\frak a_i(M) = \mathrm{Ann}\, H^i_{\frak m}(M) \supseteq
\mathrm{Ann}\, U_M(0) \, \frak a_i(\overline{M})$ for all $i \geq 0$. So
$$\sqrt{\frak b(M)} = \sqrt{\frak a(M)} \supseteq \sqrt{\mathrm{Ann}\, U_M(0)\,\frak a(\overline{M})} =
\sqrt{\mathrm{Ann}\, U_M(0)\,\frak b(\overline{M})}.$$
We claim that $\frak b(M) \nsubseteq \frak q$ for all $\frak q \in
\mathrm{Assh}\, M/xM$. Indeed, we have $\dim R/\frak
b(\overline{M}) \leq d-2$ by Remark \ref{C3.1.2} (ii). Therefore $\frak b(\overline{M}) \nsubseteq \frak q$.
Suppose $\mathrm{Ann}\, U_M(0) \subseteq \frak q$. Then $\frak q \in
\mathrm{Assh}\, U_M(0)$ since $\dim U_M(0) = \dim R/\frak q = d-1$. It contradicts our assumption that $x \notin \frak p$ for all $\frak p \in
\mathrm{Ass}\, U_M(0) \setminus \{\frak m\}$. So
$\mathrm{Ann}\, U_M(0) \nsubseteq \frak q$, and hence $\frak b(M)
\nsubseteq \frak q$ for all $\frak q \in \mathrm{Assh}\, M/xM$. Thus there exists
$x_d \in \frak b(M)^3$ such that $x_d$ is a parameter element of $M/xM$ by the prime avoidance theorem. Such an element $x_d$ satisfies the requirements. The proof is complete.
\end{proof}
We are now ready to prove that the unmixed degrees satisfy the Bertini rule of extended degrees.
\begin{theorem}\label{D3.3.17} Let $M$ be a finitely generated $R$-module of dimension $d$.
Let $x$ be a superficial element of $M$ and of all $U_i(M)$, $1 \leq i
\leq d-1$, with respect to $I$. Then
$$\mathrm{udeg}(I,M) \ge \mathrm{udeg}(I,M/xM).$$
\end{theorem}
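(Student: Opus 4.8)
The plan is to estimate $\mathrm{udeg}(I,M/xM)$ by comparing the Cohen-Macaulay deviated sequence of $M/xM$ term by term with that of $M$; the superficiality of $x$ will keep the relevant multiplicities under control. The case $d=1$ is immediate: $\mathrm{udeg}(I,M)=\mathrm{adeg}(I,M)=\mathrm{deg}(I,M)+\ell(H^0_{\frak m}(M))$ by Proposition~\ref{M3.3.11}(i) and Remark~\ref{C3.3.2}(i), while $\dim M/xM=0$ gives $\mathrm{udeg}(I,M/xM)=\ell(M/xM)=\mathrm{deg}(I,M)+\ell(0:_Mx)$ by Remark~\ref{C3.3.14}(iv), and $0:_Mx$ is a finite-length submodule of $H^0_{\frak m}(M)$ because a superficial element is $I$-filter regular. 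So assume $d\ge 2$.

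Since $x$ is superficial, hence $I$-filter regular, on each $U_i(M)$, I would apply Lemma~\ref{B3.3.16} repeatedly to build a part of a $C$-system of parameters $x_3,\dots,x_d$ of $M$ (empty when $d=2$) such that $x$ is a parameter element of every quotient $N_i:=M/(x_{i+3},\dots,x_d)M$, $0\le i\le d-2$, where $N_{d-2}=M$ and $M_2:=N_0$ has dimension $2$. By Lemma~\ref{B3.1.9} the same sequence is a part of a $C$-system of parameters of $M/xM$; extending both sequences to full $C$-systems of parameters and invoking the invariance Theorem~\ref{D3.2.9}, one identifies $U_{N_i}(0)\cong U_{i+1}(M)$ and $U_{N_i/xN_i}(0)\cong U_i(M/xM)$ for all $0\le i\le d-2$, and $U_0(M)\cong H^0_{\frak m}(M_2/yM_2)$ for a $C$-parameter element $y$ of $M_2$.

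Now I would apply Lemma~\ref{B3.3.15} to each $N_i$ (of dimension $i+2$) with the parameter element $x$; its hypotheses hold since $x$ is superficial on $U_{i+1}(M)$ and $I$-filter regular on $U_i(M)$. For $1\le i\le d-2$ the case $\dim N_i\ge 3$ gives
$$\delta_{i,\dim U_i(M/xM)}\,\mathrm{deg}(I,U_i(M/xM))=\delta_{i+1,\dim U_{i+1}(M)}\,\mathrm{deg}(I,U_{i+1}(M)),$$
while the case $\dim M_2=2$ applied to $N_0=M_2$ gives
$$\delta_{0,\dim U_0(M/xM)}\,\mathrm{deg}(I,U_0(M/xM))=\delta_{1,\dim U_1(M)}\,\mathrm{deg}(I,U_1(M))+\ell(0:_{H^0_{\frak m}(M_2)}x)+\ell(0:_{H^1_{\frak m}(\overline{M_2})}x),$$
with $\overline{M_2}=M_2/U_{M_2}(0)$. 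By Corollary~\ref{H3.2.5} applied to $M_2$ and $y$ one has $H^0_{\frak m}(M_2/yM_2)\cong H^0_{\frak m}(M_2)\oplus H^1_{\frak m}(\overline{M_2})$, hence $\ell(U_0(M))=\ell(H^0_{\frak m}(M_2))+\ell(H^1_{\frak m}(\overline{M_2}))$, so the two length corrections in the second display add up to at most $\ell(U_0(M))$. Summing the two displays over $0\le i\le d-2$ and using $\mathrm{deg}(I,M/xM)=\mathrm{deg}(I,M)$ (Remark~\ref{C3.3.14}(iv)) together with $\delta_{0,\dim U_0(M)}\mathrm{deg}(I,U_0(M))=\ell(U_0(M))$, the right-hand sides telescope to give
$$\mathrm{udeg}(I,M/xM)\le\mathrm{deg}(I,M)+\sum_{j=0}^{d-1}\delta_{j,\dim U_j(M)}\,\mathrm{deg}(I,U_j(M))=\mathrm{udeg}(I,M).$$

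The step I expect to be the real obstacle is the flag construction in the second paragraph: the single, fixed superficial element $x$ must be threaded through a chain of quotients of $M$ so that Lemma~\ref{B3.3.16} can be iterated all the way down to dimension $2$ and Lemma~\ref{B3.3.15} applies at each intermediate level, and one must check that the unmixed components of these quotients coincide, up to isomorphism, with the terms of the Cohen-Macaulay deviated sequences of $M$ and of $M/xM$; this is exactly where Lemma~\ref{B3.1.9} and the invariance Theorem~\ref{D3.2.9} do the work. Once the flag is in place, the only remaining point is the length estimate on the two bottom correction terms via Corollary~\ref{H3.2.5}, and it is there that the hypothesis that $x$ be superficial on $U_1(M)$ is used.
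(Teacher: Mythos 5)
Your proof is correct and follows essentially the same route as the paper: iterate Lemma~\ref{B3.3.16} (using the superficiality hypothesis via Remark~\ref{C3.3.14}(i) to verify its hypothesis at each stage) to thread $x$ through a flag of quotients by a part of a $C$-system of parameters, apply Lemma~\ref{B3.3.15} at each level, and control the two correction terms at the bottom level (dimension~$2$) by Corollary~\ref{H3.2.5}. The only cosmetic difference is indexing — you build the flag down only to $x_3$ and pick the extra $C$-parameter $y$ of $M_2$ separately rather than including it from the start — which changes nothing of substance.
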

\begin{proof} Notice that since $x$ is a superficial element of $U_i(M)$, $1 \leq i \leq d-1$, with respect to $I$ we have $x \notin \frak p$ for all $\frak p \in
\mathrm{Ass}\, U_i(M) \setminus \{\frak m\}$, $1 \leq i \leq
d-1$ by Remark \ref{C3.3.14} (i). The case $d=1$ is clear since $\mathrm{udeg}(I,M) =
\mathrm{deg}(I,M) + \ell(H^0_{\frak m}(M))$ and
$\mathrm{udeg}(I,M/xM) = \ell(M/xM) = \mathrm{deg}(I,M) +
\ell(0:_Mx)$. Suppose $d \geq 2$, by Lemma \ref{B3.3.16} we can choose a part of a $C$-system of parameters
$x_2,\ldots,x_d$ of $M$ such that $x, x_2,\ldots,x_d$
is also a system of parameters of $M$. By Lemma \ref{B3.1.9} we have $x_2, \ldots, x_d$ is a $C$-system of parameters of $M/xM$. Therefore, we have
\begin{eqnarray*}
\mathrm{udeg}(I,M) &=& \mathrm{deg}(I,M) +
\sum_{i=0}^{d-1}\delta_{i, \dim U_i(M)}\mathrm{deg}(I,U_i(M))\\
&=& \mathrm{deg}(I,M) +
\sum_{j=2}^{d+1}\delta_{j-2, \dim U_{M/(x_j,\ldots,x_d)M}(0)}\mathrm{deg}(I,U_{M/(x_j,\ldots,x_d)M}(0)),
\end{eqnarray*}
and
\begin{eqnarray*}
\mathrm{udeg}(I,M/xM) &=& \mathrm{deg}(I,M/xM) +
\sum_{i=0}^{d-2}\delta_{i, \dim U_i(M/xM)}\mathrm{deg}(I,U_i(M/xM))\\
&=& \mathrm{deg}(I,M/xM) +
\sum_{j=3}^{d+1}\delta_{j-3, \dim U_{M/(x,x_j,\ldots,x_d)M}(0)}\mathrm{deg}(I,U_{M/(x,x_j,\ldots,x_d)M}(0)).
\end{eqnarray*}
Since $x$ is a superficial element of $M$ with respect to $I$ we have
$\mathrm{deg}(I,M/xM) = \mathrm{deg}(I,M)$. For $j>3$ we have
$\dim M/(x_j,\ldots,x_d)M = j-1 \geq 3$. By Lemma \ref{B3.3.15} we obtain
$$\delta_{j-2, \dim U_{M/(x_j,\ldots,x_d)M}(0)}\mathrm{deg}(I,U_{M/(x_j,\ldots,x_d)M}(0)) = \delta_{j-3, \dim U_{M/(x,x_j,\ldots,x_d)M}(0)}\mathrm{deg}(I,U_{M/(x,x_j,\ldots,x_d)M}(0))$$
for all $3<j \leq d+1$. For $j=3$, set $M' = M/(x_3,\ldots,x_d)M $ we have $\dim M'
= 2$. By Lemma \ref{B3.3.15} we have
$$\delta_{0, \dim U_{M'/xM'}(0)}\mathrm{deg}(I,U_{M'/xM'}(0)) = \delta_{1, \dim U_{M'}(0)}\mathrm{deg}(I,U_{M'}(0)) +
\ell(0:_{H^0_{\frak m}(M')}x) + \ell(0:_{H^1_{\frak
m}(M'/U_{M'}(0))}x).$$
By Corollary \ref{H3.2.5} we have
$$U_0(M') = H^0_{\frak m}(M'/x_2M') \cong H^0_{\frak m}(M') \oplus H^1_{\frak m}(M'/U_{M'}(0)).$$
So
\begin{eqnarray*}
\delta_{0, \dim U_0(M')}\mathrm{deg}(I,U_0(M')) &=& \ell(H^0_{\frak m}(M')) + \ell(H^1_{\frak m}(M'/U_{M'}(0)))\\
&\ge& \ell(0:_{H^0_{\frak m}(M')}x) + \ell(0:_{H^1_{\frak
m}(M'/U_{M'}(0))}x).
\end{eqnarray*}
Therefore
$$\delta_{0, \dim U_{M'/xM'}(0)}\mathrm{deg}(I,U_{M'/xM'}(0)) \le \delta_{1, \dim U_{M'}(0)}\mathrm{deg}(I,U_{M'}(0)) +\delta_{0, \dim U_0(M')}\mathrm{deg}(I,U_0(M')).$$
More precisely, we have
$$\delta_{0, \dim U_{M/(x,x_3,\ldots,x_d)M}(0)}\mathrm{deg}(I,U_{M/(x,x_3,\ldots,x_d)M}(0)) \le
\sum_{j=2}^{3}\delta_{j-2, \dim U_{M/(x_j,\ldots,x_d)M}(0)}\mathrm{deg}(I,U_{M/(x_j,\ldots,x_d)M}(0)).$$
In conclusion,
$\mathrm{udeg}(I,M) \ge \mathrm{udeg}(I,M/xM)$. The proof is complete.
\end{proof}
\begin{remark}\rm By the prime avoidance theorem we can always choose $x$ satisfying the condition of Theorem \ref{D3.3.17}. Furthermore, according to the above proof we have $\mathrm{udeg}(I,M/xM) = \mathrm{udeg}(I,M)$ provided $x$ annihilates
 $H^0_{\frak m}(M')$ and $H^1_{\frak
m}(M'/U_{M'}(0))$, where $M' = M/(x_3,\ldots,x_d)M$. This is the case if $xU_0(M) = 0$ by Corollary \ref{H3.2.5}.
\end{remark}

By Proposition \ref{M3.3.9}, Theorems \ref{D3.3.8} and \ref{D3.3.17} we have the main result of this section.
\begin{theorem} For every $\frak m$-primary ideal $I$, the unmixed degree $\mathrm{udeg}(I, \bullet)$ is an extended degree on the category of finitely generated $R$-modules $\mathcal{M}(R)$.
\end{theorem}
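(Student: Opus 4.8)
The plan is to verify directly the three defining conditions of an extended degree in Definition \ref{D3.3.3}; each has already been prepared by one of the results above, so the argument is an assembly.

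Condition (i) follows from Proposition \ref{M3.3.9}: since $(R,\frak m)$ is Noetherian local, $N:=H^0_{\frak m}(M)$ is a submodule of $M$ of finite length with $M/N=\overline M$, and that proposition then gives $\mathrm{udeg}(I,M)=\mathrm{udeg}(I,\overline M)+\ell(H^0_{\frak m}(M))$. Condition (iii) is immediate from the definition of $\mathrm{udeg}$ together with the unmixed theorem: if $M$ is Cohen-Macaulay then its Cohen-Macaulay deviated sequence vanishes, i.e. $U_i(M)=0$ for all $0\le i\le d-1$, so every summand $\delta_{i,\dim U_i(M)}\,\mathrm{deg}(I,U_i(M))$ is zero and $\mathrm{udeg}(I,M)=\mathrm{deg}(I,M)$; this is also one implication of Theorem \ref{D3.3.8}(i).

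The substantive point is condition (ii), Bertini's rule. In the present context a \emph{generic element} $x\in I\setminus\frak mI$ of $M$ is one that is superficial with respect to $I$ for $M$ and simultaneously for each module $U_i(M)$, $1\le i\le d-1$, of the Cohen-Macaulay deviated sequence; such an $x$ exists once the residue field is infinite, and we may harmlessly pass to $R[X]_{\frak mR[X]}$ without altering any of the degrees in sight (cf. Remark \ref{C3.3.14}(ii)), since a superficial element for the single module $M\oplus\bigoplus_{i=1}^{d-1}U_i(M)$ does the job. For any such $x$, Theorem \ref{D3.3.17} gives precisely $\mathrm{udeg}(I,M)\ge\mathrm{udeg}(I,M/xM)$, which is condition (ii).

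The only genuine obstacle is this last inequality, and it has already been overcome in Theorem \ref{D3.3.17} (whose proof is the technical heart of the section, using Proposition \ref{M3.2.19} and Lemmas \ref{B3.3.15}, \ref{B3.3.16}); what remains here is merely to reconcile the notion of ``generic element'' in Definition \ref{D3.3.3}(ii) with the superficiality hypotheses of that theorem, observing that superficial elements for a finite family of modules are themselves generic. Assembling the three verifications yields that $\mathrm{udeg}(I,\bullet)$ is an extended degree on $\mathcal M(R)$ for every $\frak m$-primary ideal $I$.
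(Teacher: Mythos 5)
Your proposal is correct and takes essentially the same route as the paper, which simply cites Proposition \ref{M3.3.9} for condition (i), Theorem \ref{D3.3.8} (or, as you observe, just the definition of the deviated sequence) for condition (iii), and Theorem \ref{D3.3.17} for Bertini's rule. Your added remark that a single superficial element for $M\oplus\bigoplus_{i=1}^{d-1}U_i(M)$ realizes the genericity required in Definition \ref{D3.3.3}(ii) is exactly the point left implicit in the paper's one-line assembly.
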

We next compare the unmixed degree and the homological degree for sequentially Cohen-Macaulay modules.
\begin{remark}\rm
Suppose $(R, \frak m)$ be the homomorphic image of a Gorenstein local ring $S$ of dimension $n$, and $M$ a sequentially Cohen-Macaulay $R$-module. It is easy to see that
$\mathrm{Ext}^i_S(M, S)$ is either a Cohen-Macaulay module or zero module for all $i$. By Theorem
\ref{D3.3.8} we have
$$\mathrm{udeg}(I,M) = \mathrm{adeg}(I,M) = \mathrm{deg}(I,M) + \sum_{i=0}^{d-1} \mathrm{deg}(\mathrm{Ext}^{n-i}_S(M, S))$$
for the last equation see \cite[Theorem 3.11]{NR06}.
Furthermore by \cite[Theorem 3.5]{NR06} we have
$$\mathrm{hdeg}(I,M) = \mathrm{deg}(I,M)  + \sum_{i=0}^{d-1} \binom{d-1}{i}\mathrm{deg}(\mathrm{Ext}^{n-i}_S(M, S)).$$
Therefore $\mathrm{udeg}(I,M) \leq \mathrm{hdeg}(I,M)$. The equality occurs if and only if
$\mathrm{Ext}^{n-i}_S(M, S) = 0$ for all $1 \leq i \leq d-2$. In this case the dimension filtration of $M$ is either $H^0_{\frak m}(M) \subseteq M$ or $H^0_{\frak m}(M) \subseteq U_M(0) \subseteq M$ with dim $U_M(0) = d-1$.
\end{remark}
We close this paper with some examples and an open question.
\begin{example}\rm Let $R = k[[X_1,\ldots,X_4]]/(X_1^2, X_1X_2,
X_1X_3)$ where $k$ is a field and $X_i, 1 \leq i \leq 4,$ are indeterminates. We denote by $x_i$ the image of $X_i$ in $R$. We have that $R$ is a sequentially
Cohen-Macaulay ring of dimension $3$ with the dimension filtration $\mathcal{D}: 0
\subseteq (x_1) \subseteq R$. We have
$$\mathrm{deg}(R) = 1 < \mathrm{adeg}(R) = \mathrm{udeg}(R) = 2 < \mathrm{hdeg}(R) = 3.$$
\end{example}

\begin{example}\rm
Let $R = k[[X_1,\ldots,X_7]]/(X_1,X_2, X_3) \cap (X_4,X_5,X_6)$ where $k$
is a field and $X_i, 1 \leq i \leq 7,$ are indeterminates. It is easy to see that $\mathrm{deg}(R) =  \mathrm{adeg}(R) = 2$. Moreover, we can compute that $\mathrm{hdeg}(R) = 5$ and $\mathrm{udeg}(R) = 4$.
\end{example}
\begin{question}\rm Is it true that $\mathrm{udeg}(I, M) \leq
\mathrm{hdeg}(I, M)$ for all finitely generated $R$-modules $M$ and all $\frak m$-primary ideals $I$?
\end{question}

%\begin{acknowledgement}
%Most results of this paper are contained in the second author thesis \cite{Q13} under the supervisor of the first author that was defended in 2013 at Hanoi Institute of Mathematics. After that many applications of this work have been found about the attached primes of local cohomology \cite{NQ14}, the Hilbert polynomial and its coefficients \cite{CN} and the big Cohen-Macaulay algebra in positive characteristic \cite{Q16}. The readers are encouraged to these consequences.

%\end{acknowledgement}

\end{document}